\newtheorem{theorem}{Theorem}[section]
\newtheorem{corollary}[theorem]{Corollary}
\newtheorem{proposition}[theorem]{Proposition}
\newtheorem{lemma}[theorem]{Lemma}
\theoremstyle{definition}
\theoremstyle{remark}
\newtheorem*{remark}{Remark}
\theoremstyle{remark}
\numberwithin{equation}{section}
\crefname{figure}{Figure}{Figures}
\theoremstyle{plain}
\newtheorem*{theorem*}{Theorem}
\crefname{theorem}{Theorem}{Theorems}
\crefname{corollary}{Corollary}{Corollaries}
\newtheorem*{corollary*}{Corollary}
\crefname{lemma}{Lemma}{Lemmata}
\crefname{proposition}{Proposition}{Propositions}
\crefname{conjecture}{Conjecture}{Conjectures}
\newtheorem*{conjecture*}{Conjecture}
\crefname{conjecture*}{Conjecture}{Conjectures}
\crefname{definition}{Definition}{Definitions}
\crefname{hypothesis}{Hypothesis}{Hypotheses}
\renewcommand{\hat}{\widehat}
\newcommand{\Z}{\mathbb{Z}}
\newcommand{\R}{\mathbb{R}}
\newcommand{\Q}{\mathbb{Q}}
\newcommand{\CC}{\mathfrak{C}}
\newcommand{\re}{\textup{Re}}
\newcommand{\im}{\textup{Im}}
\newcommand{\GL}{\mathrm{GL}}
\renewcommand{\epsilon}{\varepsilon}
\renewcommand{\pmod}[1]{\, (\mathrm{mod} {\, #1})}
\renewcommand{\Re}{\mathrm{Re}}
\newcommand{\SL}{ {\rm SL}}
\renewcommand{\Re}{\mathrm{Re}}
\newcounter{equi1}
\newcommand{\bea}{\begin{eqnarray*}}
\newcommand{\eea}{\end{eqnarray*}}
\newcommand{\beq}{\begin{equation}}
\newcommand{\eeq}{\end{equation}}
\newcommand{\begsta}{\begin{statements}}
\def\endsta{\end{statements}}
\newcommand{\begaeq}{\begin{aequivalenz}}
\def\endaeq{\end{aequivalenz}}
\begin{document}

\title[Zeros of $L$-functions and large partial sums of Dirichlet coefficients]{Zeros of $L$-functions and large partial sums of Dirichlet coefficients}
\author{Bryce Kerr}
\author{Oleksiy Klurman}
\author{Jesse Thorner}

%\subjclass{ 11L40.}
\begin{abstract}
Let $L(s,\pi)=\sum_{n=1}^{\infty}\lambda_{\pi}(n)n^{-s}$ be an $L$-function that satisfies a weak form of the generalized Ramanujan conjecture.  We prove that large partial sums of $\lambda_{\pi}(n)$ strongly repel the low-lying zeros of $L(s,\pi)$ away from the critical line.  Our results extend and quantitatively improve preceding work of Granville and Soundararajan.
\end{abstract}
\maketitle

\section{Introduction and statement of the main results}
\label{sec:intro}

In \cite{GS_JAMS,GS_JEMS}, Granville and Soundararajan proved striking connections between the size of character sums and the distribution of zeros of the associated Dirichlet $L$-functions.  To describe their results, let $\chi\pmod{q}$ be a primitive Dirichlet character, and let
\[
S(x,\chi)=\sum_{n\leq x}\chi(n).
\]
It is conjectured that if $\epsilon>0$ and $x>q^{\epsilon}$, then $S(x,\chi)=o_{\epsilon}(x)$.  Under the generalized Riemann hypothesis (GRH) for $L(s,\chi)$, Granville and Soundararajan \cite{GS_JAMS} achieved this bound as $(\log x)/\log\log q\to\infty$.  They unconditionally proved that this is the best range possible.  In \cite[Theorems 1.3 and 1.4]{GS_JEMS}, Granville and Soundararajan further develop the relationship between zeros of $L(s,\chi)$ and the size of $S(x,\chi)$.  Let $x\in[\exp(\sqrt{\log q}),\sqrt{q}]$ and $N\in[1,(\log x)^{1/100}]$.  They prove that there exist constants $\Cl[abcon]{GS_JEMS_1},\Cl[abcon]{GS_JEMS_2},\Cl[abcon]{GS_JEMS_3}>$ such that if $|S(x,\chi)|=x/N$, then there exists $\phi\in[-\Cr{GS_JEMS_1}N,\Cr{GS_JEMS_1}N]$ such that\footnote{All counts of and sums over zeros of $L$-functions include the multiplicity of the zeros.}
\begin{equation}
\label{eqn:GS}
\#\Big\{\rho\colon L(\rho,\chi)=0,~|1+i\phi-\rho|<\frac{M}{\log x}\cdot \frac{\log q}{\log x}\Big\}\geq \frac{M}{400},\qquad M\in\Big[\Cr{GS_JEMS_2}N^6,\frac{1}{2}\log x\Big].
\end{equation}
If $\chi$ has order $k\geq 2$, then we may take $\phi=0$ when $M\geq (\Cr{GS_JEMS_3}N)^{2k^2}$.

If $k=2$, $\epsilon\in((\log q)^{-1/3},\frac{1}{2}]$, and $S(q^{\epsilon},\chi)$ is large, then \eqref{eqn:GS} with $M=(\epsilon^2/4)\log q$ implies that a positive proportion ($\gg\epsilon^2$) of the $O(\log q)$ nontrivial zeros $\beta+i\gamma$ of $L(s,\chi)$ with $\beta\in(0,1)$ and $|\gamma|\leq\frac{1}{4}$ satisfy $\beta\geq\frac{3}{4}$.  Such zeros violate GRH, the assertion that $\beta=\frac{1}{2}$ always.  As a corollary of our main result, \cref{thm:main}, we establish an improvement.

\begin{corollary}
\label{cor:main}
Let $\delta\in(0,\frac{1}{2}]$.  There exist constants $\Cl[abcon]{cor1.1_0},\Cl[abcon]{cor1.1_1},\Cl[abcon]{cor1.1_2},\Cl[abcon]{cor1.1_3},\Cl[abcon]{cor1.1_4}>0$ (depending on $\delta$) such that the following is true.  Let $q\geq \Cr{cor1.1_0}$, and let $\chi\pmod{q}$ be a primitive Dirichlet character of order $k\geq 2$.  Let $x\in[\exp((\log q)^{49/50}),\sqrt{q}]$, $N\in[1,(\log x)^{1/100}]$, and $L\in[\Cr{cor1.1_1}N^{10}(\frac{\log q}{\log x})^{1-2\delta},\Cr{cor1.1_2}\log x]$.  If $|S(x,\chi)|=x/N$, then there exists $\phi\in[-\Cr{cor1.1_3}N,\Cr{cor1.1_3}N]$ such that
\begin{equation}
\label{eqn:GS_region}
\#\Big\{\rho\colon L(\rho,\chi)=0,~|1+i\phi-\rho|\leq \frac{L}{\log x} \Big(\frac{\log q}{\log x}\Big)^{\delta}\Big\}\geq \Cr{cor1.1_4} L.
\end{equation}
One may take $\phi=0$ when $L\geq (\Cr{GS_JEMS_3} N)^{2k^2}$.
\end{corollary}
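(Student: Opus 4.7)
My plan is to deduce \cref{cor:main} as a specialization of the main result \cref{thm:main} to the Dirichlet $L$-function $L(s,\chi)$. Since $|\chi(n)|\le 1$, the weak form of the generalized Ramanujan conjecture required by \cref{thm:main} is satisfied trivially with $\lambda_{\pi}(n)=\chi(n)$, and the analytic conductor of $L(s,\chi)$ is comparable to $q$. The hypothesis $|S(x,\chi)|=x/N$ therefore feeds directly into \cref{thm:main} with the same parameter $N$, producing a shift $\phi\in[-\Cr{cor1.1_3}N,\Cr{cor1.1_3}N]$ for which the ball of radius $\frac{L}{\log x}\bigl(\frac{\log q}{\log x}\bigr)^{\delta}$ around $1+i\phi$ captures at least $\Cr{cor1.1_4}L$ zeros of $L(s,\chi)$. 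The free parameter $\delta\in(0,\tfrac{1}{2}]$ and the factor $N^{10}(\log q/\log x)^{1-2\delta}$ in the lower bound on $L$ should emerge from optimizing the trade-off in \cref{thm:main} between the radius of the admissible disk and the guaranteed zero count.

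To upgrade this conclusion to the claim that $\phi=0$ is admissible as soon as $L\ge (\Cr{GS_JEMS_3}N)^{2k^2}$, I would invoke the finite-order symmetry of $\chi$, following the strategy used by Granville and Soundararajan for \eqref{eqn:GS}. Because $\chi^k$ is principal, information about $L(s,\chi)$ propagates to $L(s,\chi^j)$ for $j=1,\ldots,k-1$: a cluster of $\gtrsim L$ zeros near $1+i\phi$ for $L(s,\chi)$ forces comparable clusters near $1+ij\phi$ for $L(s,\chi^j)$, either by iterating the pretentious identity underlying \cref{thm:main} or by examining the Dirichlet coefficients of $L(s,\chi)^{j}$ in the Selberg--Delange framework. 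Applying the Riemann--von Mangoldt formula to $\prod_{j=1}^{k}L(s,\chi^j)$ bounds the total zero count in a bounded vertical strip by $O(k\log q)$; comparing this with the aggregate $\gtrsim k^{2}L$ produced by the individual clusters, and iterating to rule out rational shifts of small denominator, forces $\phi=0$ once $L$ exceeds the stated polynomial-in-$N$ threshold, matching the constant $\Cr{GS_JEMS_3}$ of \cite{GS_JEMS}.

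The step I expect to be the main obstacle is the precise quantitative bookkeeping: matching the exponent $10$ on $N$, the shape $(\log q/\log x)^{1-2\delta}$ of the radius, and the relaxed lower bound $x\ge\exp((\log q)^{49/50})$ (a marked improvement over the $\exp(\sqrt{\log q})$ threshold of \cite{GS_JEMS}) all require careful tracking of internal parameters in the proof of \cref{thm:main}. This is a matter of reading off explicit constants and optimizing rather than introducing a new idea, but it is where the genuine quantitative improvement over \eqref{eqn:GS} materializes: the radius is reduced from $(M/\log x)(\log q/\log x)$ to $(L/\log x)(\log q/\log x)^{\delta}$ at the cost of replacing $N^{6}$ by $N^{10}(\log q/\log x)^{1-2\delta}$, and the $\phi=0$ reduction is inherited almost verbatim from the classical finite-order argument.
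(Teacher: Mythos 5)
Your first paragraph captures the paper's actual derivation: one applies \cref{thm:main} with $m=\kappa=1$ and $A_0\ll 1$ (the bound $|\chi(n)|\leq 1$ makes \eqref{eqn:weak_ramanujan} trivial, and $\CC\asymp q$), and the ranges on $x$, $N$, and $L$ in \cref{cor:main} are then direct translations of those in \cref{thm:main}. That part is right.

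Your sketch of the $\phi=0$ reduction, however, overshoots and relies on a step that is not available. You propose to propagate a cluster of $\gtrsim L$ zeros of $L(s,\chi)$ near $1+i\phi$ to comparable clusters of zeros of $L(s,\chi^j)$ near $1+ij\phi$, and then obtain a contradiction from a zero count on $\prod_{j}L(s,\chi^j)$. There is no mechanism by which a concentration of zeros of one $L$-function forces a concentration of zeros of its twisted powers; the Euler products are unrelated at the level of zeros, and nothing in \cref{thm:main} or \cref{prop:HadamardEuler} transfers zeros between different $L$-functions. The finite order of $\chi$ acts on the multiplicative-function side, not on zeros of powers. The paper's actual argument is much shorter: it cites \cite[Lemma 3.3]{GS_JEMS}, which uses the fact that $\chi^k$ is principal to bound the pretentious phase directly, yielding $|\phi|\leq y_0^{-1}(\Cr{N_bound}N)^{2k^2}$ for an absolute constant $\Cr{N_bound}$. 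Once $L\geq(\Cr{GS_JEMS_3}N)^{2k^2}$, this shift is smaller than the radius $\frac{L}{\log x}(\frac{\log q}{\log x})^{\delta}$, so the disk around $1+i\phi$ sits inside a disk around $1$ of comparable radius and one may take $\phi=0$ after adjusting constants. No additional zero counting, no iteration over $j$, and no new clustering result are required. Your secondary suggestion, examining coefficients of $L(s,\chi)^j$ in a pretentious/Selberg--Delange spirit, is in the right direction, but the paper simply invokes the existing lemma rather than rederiving it.
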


Let $\delta\in(0,\frac{1}{2}]$, and let $\chi\pmod{q}$ be a primitive Dirichlet character of order $k\geq 2$.  Let $\epsilon\in[(\log q)^{-1/50},\frac{1}{2}]$, $x=q^{\epsilon}$, and $N\in[1,(\log x)^{1/100}]$.  If
\[
|S(x,\chi)| = x/N,\qquad M \in [\Cr{cor1.1_1}N^{10},\Cr{cor1.1_2}\epsilon^{1-2\delta}\log x],\qquad L = \frac{M}{\epsilon^{1-2\delta}},
\]
then the bound \eqref{eqn:GS_region} in \cref{cor:main} becomes
\begin{equation}
\label{eqn:renormalized}
\#\Big\{\rho\colon L(\rho,\chi)=0,~|1+i\phi-\rho|<\epsilon^{\delta}\frac{M}{\log x}\cdot\frac{\log q}{\log x}\Big\}\geq \Cr{cor1.1_4} \frac{M}{\epsilon^{1-2\delta}}.
\end{equation}
This ensures that many more zeros of $L(s,\chi)$ must lie in a circle centered at $1+i\phi$ with a smaller radius than in \eqref{eqn:GS}.  Choosing $M = \min\{\frac{1}{4},\Cr{cor1.1_2}\}\epsilon^{2-\delta}\log q$, we conclude that if $S(q^{\epsilon},\chi)$ is large, then a much larger positive proportion ($\gg_{\delta}\epsilon^{1+\delta}$) of the $O(\log q)$ nontrivial zeros $\beta+i\gamma$ of $L(s,\chi)$ with $0<\beta<1$ and $|\gamma-\phi|\leq\frac{1}{4}$ satisfy $\beta\geq\frac{3}{4}$ (with $\phi=0$ when $k=2$).  This confirms a prediction of Granville and Soundararajan \cite[p. 2]{GS_JEMS}.

To describe the sharpness of the bound in \eqref{eqn:GS_region}, we recall a result of Linnik:  If $t\in\R$ and $1/\log(q(|t|+3))\leq r\leq 1$, then $\#\{\rho\colon L(\rho,\chi)=0,~|\rho-(1+it)|\leq r \}\ll r\log(q(|t|+3))$. Let $\delta,\epsilon\in(0,1]$, $x=q^{\epsilon}$, $L\geq 1$, and $|\phi|\ll q$. Applying Linnik's bound with 
\begin{align*}
r=\frac{L}{\log x} \Big(\frac{\log q}{\log x}\Big)^{\delta}, \qquad t=\phi,
\end{align*}
we obtain the unconditional upper bound
\begin{align*}
\#\Big\{\rho\colon L(\rho,\chi)=0,~|1+i\phi-\rho|<\frac{L}{\log x} \Big(\frac{\log q}{\log x}\Big)^{\delta}\Big\}\ll  \frac{L}{\epsilon^{1+\delta}}.
\end{align*}
This matches the lower bound from \eqref{eqn:GS_region} up to a factor of $(1/\epsilon)^{1+\delta}$, the reciprocal of our new and improved proportion of nontrivial zeros repelled from the critical line.

Our work extends to other $L$-functions.  For example, let $k\geq 2$ be even, and let
\[
f(z)=\sum_{n=1}^{\infty}\lambda_f(n)n^{\frac{k-1}{2}}e^{2\pi i n z}
\]
be a holomorphic cusp form on $\SL_2(\Z)$ of weight is $k$ with Hecke eigenvalues $\lambda_f(n)$.  Suppose that $f$ is an eigenfunction of each Hecke operator and that $f$ is normalized so that $\lambda_f(1)=1$.  It follows that if $\tau(n)$ is the divisor function, then $|\lambda_f(n)|\leq \tau(n)$.  Define
\[
S(x,f)=\sum_{n\leq x}\lambda_f(n),\qquad L(s,f) = \sum_{n=1}^{\infty}\frac{\lambda_f(n)}{n^s}.
\]
The $L$-function $L(s,f)$ converges absolutely for $\re(s)>1$ and satisfies a functional equation relating $L(s,f)$ to $L(1-s,f)$.  Lamzouri \cite{Lamzouri} proved that under GRH for $L(s,f)$, the bound $S(x,f)=o(x\log x)$ holds in the range $(\log x)/\log\log k \to\infty$.  Also, Lamzouri unconditionally proved that this is the best possible range.

Frechette, Gerbelli-Gauthier, Hamieh, and Tanabe \cite{FGGHT} proved a version of \eqref{eqn:GS} for $L(s,f)$.  In particular, if $\epsilon>(\log k)^{-1/8}$, $1\leq N\leq (\log k)^{1/200}$, $x\geq k^{\epsilon}$, and $S(x,f)\gg (x\log x)/N$, then there exists $\phi\in\R$ with $|\phi|\ll N$ such that a positive proportion ($\gg \epsilon^2$) of the $O(\log k_{f})$ nontrivial zeros $\beta+i\gamma$ of $L(s,\chi)$ with $0<\beta<1$ and $|\gamma-\phi|\leq \frac{1}{4}$ must satisfy $\beta\geq\frac{3}{4}$.  In other words, large partial sums of Hecke eigenvalues force a positive proportion of the nontrivial zeros of $L(s,f)$ with imaginary part close to $\phi$ to violate GRH.  As a corollary of our main result, \cref{thm:main}, we improve upon \cite{FGGHT} in two ways.

\begin{corollary}
\label{cor:main_2}
Let $\delta\in(0,\frac{1}{2}]$.  There exist constants $\Cl[abcon]{cor1.2_1},\Cl[abcon]{cor1.2_2},\Cl[abcon]{cor1.2_3},\Cl[abcon]{cor1.2_4}>0$ (depending on $\delta$) such that the following is true.  Let $f$ be a holomorphic cuspidal Hecke eigenform on $\mathrm{SL}_2(\Z)$ of even weight $k \geq \Cr{cor1.2_1}$.  Let $x\in[\exp((\log k)^{99/100}),\sqrt{k}]$, $N\in[1,(\log x)^{1/200}]$, and $L\in[\Cr{cor1.2_2}N^{5}(\frac{\log k}{\log x})^{1-2\delta},\Cr{cor1.2_3}\log x]$.  If $|S(x,f)|=(x\log x)/N$, then
\begin{equation*}
\#\Big\{\rho\colon L(\rho,\chi)=0,~|\rho-1|<\frac{L}{\log x} \Big(\frac{\log q}{\log x}\Big)^{\delta}\Big\}\geq \Cr{cor1.2_4} L.
\end{equation*}
\end{corollary}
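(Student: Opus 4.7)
The plan is to deduce \cref{cor:main_2} as a direct application of \cref{thm:main} to the holomorphic cusp form $f$. The argument is structurally parallel to the deduction of \cref{cor:main} from \cref{thm:main}: the role of a primitive Dirichlet character $\chi\pmod q$ is played by $f$, and the analytic conductor $\mathcal{C}(f)\asymp k^2$ replaces $q$, so that $\log k$ replaces $\log q$ up to absolute constants that can be absorbed into $\Cr{cor1.2_2}$ and $\Cr{cor1.2_3}$.

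First, I would check that $L(s,f)$ satisfies the hypotheses of \cref{thm:main}. The $L$-function $L(s,f)$ is entire of order one and satisfies a self-dual functional equation of degree two. The weak form of the generalized Ramanujan conjecture required by \cref{thm:main} is supplied by Deligne's theorem in the form $|\lambda_f(n)|\leq \tau(n)$, i.e.\ the bound by the $2$-fold divisor function. Since $\sum_{n\leq x}\tau(n)\sim x\log x$, the natural majorant for $|S(x,f)|$ is $x\log x$, so the hypothesis $|S(x,f)|=(x\log x)/N$ asserts that the partial sum lies a factor of $N$ below this majorant --- the precise analogue of $|S(x,\chi)|=x/N$ in the character case, where the corresponding majorant is simply $x$. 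This is the correct normalisation for \cref{thm:main} in the cusp-form regime.

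Next, I would invoke \cref{thm:main} with $\pi=f$ and read off the conclusion. Because $f$ has trivial nebentypus, the Hecke eigenvalues $\lambda_f(n)$ are real and $L(s,f)$ is self-dual; consequently its nontrivial zeros are symmetric about the real axis. Pairing zeros with their conjugates forces the optimal shift $\phi$ produced by the main theorem to equal $0$ --- the same mechanism that permits $\phi=0$ in \cref{cor:main} once $\chi$ has order $k\geq 2$. This is why no $\phi$-shift appears in the statement of \cref{cor:main_2}. The numerical parameters (the range $N\in[1,(\log x)^{1/200}]$, the threshold $L\geq \Cr{cor1.2_2}N^{5}(\log k/\log x)^{1-2\delta}$, and the lower bound $k\geq \Cr{cor1.2_1}$) all arise by feeding the Ramanujan parameter $r=2$ into the general parameter dependencies of \cref{thm:main}.

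The main obstacle is bookkeeping: one must match Deligne's bound with the precise form of the weak Ramanujan hypothesis in \cref{thm:main}, identify the analytic conductor $\mathcal{C}(f)$ with the conductor parameter there, and verify that the $\log x$ in the partial-sum normalisation is the specialisation of the general $(\log x)^{r-1}$ factor to $r=2$. No new analytic ideas beyond those powering \cref{thm:main} are needed.
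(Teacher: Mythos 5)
Your proposal is essentially the paper's proof: invoke \cref{thm:main} with $m=\kappa=2$, $A_0\ll 1$, note that $\mathcal{C}(f)\asymp k^2$ so $\log\mathcal{C}\asymp\log k$, check that $(\log x)^{\kappa-1}=\log x$ gives the stated normalisation of $S(x,f)$, and verify that the lower bound $L\geq\Cr{cor1.2_2}N^{5}(\log k/\log x)^{1-2\delta}$ with $x\le\sqrt{k}$ simultaneously dominates the threshold $\Cr{thm1.3_2}N^{2/\ell_{2}}(\cdots)^{1-2\delta}$ (since $2/\ell_{2}<5$) and the $\phi=0$ threshold $\Cr{thm1.3_6}N^{6/\kappa}=\Cr{thm1.3_6}N^{3}$. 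That matches the paper's (very terse) proof, which states exactly these specialisations together with the observation that $a_f(n)$ is real.

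One inaccuracy worth flagging: your stated \emph{mechanism} for $\phi=0$ is not the one the paper uses, and is not correct as a derivation. You argue that self-duality forces the nontrivial zeros to be symmetric about the real axis, and that ``pairing zeros with conjugates'' forces the optimal shift to vanish. But the shift $\phi$ in \cref{thm:main} is not chosen by reference to the zeros at all; it is the maximiser $t_1$ of $|F(1+1/\log x+it)|$ in the Hal\'asz argument, and \cref{lem:twist} bounds this maximiser by $|\phi|\ll N^{6/\kappa}/y_0$ when the $a_f(n)$ are real (via a Cauchy--Schwarz/cosine argument, not via zero symmetry). The theorem then absorbs this small shift into the radius once $L\gg N^{6/\kappa}$. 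Likewise, the parallel you draw to \cref{cor:main} (``the same mechanism that permits $\phi=0$ once $\chi$ has order $k\geq 2$'') is off: that case rests on a separate lemma from Granville--Soundararajan tied to the \emph{order} of the character, not to reality of coefficients, and yields the very different threshold $L\geq(\Cr{GS_JEMS_3}N)^{2k^2}$. Since you are only citing the conclusion of \cref{thm:main} and not re-deriving it, this does not create a gap, but the explanation as written would mislead a reader.
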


\begin{remark}
Let $\delta\in(0,\frac{1}{2}]$, $\epsilon\in[(\log k)^{-1/200},\frac{1}{2}]$, $x=k^{\epsilon}$, and $L=(\epsilon^{1+\delta}/4)\log k$.  \cref{cor:main_2} implies that if $S(k^{\epsilon},f)$ is large, then a positive proportion ($\gg_{\delta}\epsilon^{1+\delta}$) of the $O(\log k)$ nontrivial zeros $\beta+i\gamma$ with $0<\beta<1$ and $|\gamma|\leq \frac{1}{4}$ satisfy $\beta\geq\frac{3}{4}$.  This proportion is much larger than what is shown in \cite{FGGHT}, with the added benefit that we may take $\phi=0$.
\end{remark}

To state the main result from which \cref{cor:main,cor:main_2} follow, we describe the class of $L$-functions considered by Soundararajan in \cite{Sound_weak}.  Let $m\geq 1$ be a fixed integer.  Let $L(s,\pi)$ be given by the Dirichlet series and Euler product
\begin{equation}
	\label{eqn:Dirichlet_Euler}
	L(s,\pi)=\sum_{n=1}^{\infty}\frac{\lambda_{\pi}(n)}{n^s}=\prod_p\prod_{j=1}^m\Big(1-\frac{\alpha_{j,\pi}(p)}{p^s}\Big)^{-1},
\end{equation}
both of which converge absolutely in the region $\re(s)>1$.  We write
\begin{equation}
	\label{eqn:gamma_factor}
	L(s,\pi_{\infty})=q_{\pi}^{s/2}\prod_{j=1}^m \Gamma\Big(\frac{s+\mu_{\pi}(j)}{2}\Big),
\end{equation}
where $q_{\pi}$ denotes the conductor and the $\mu_{\pi}(j)$ are complex numbers.  We assume that there exists $\theta_{\pi}\in[0,1-\frac{1}{m}]$ such that for all primes $p$ and all $1\leq j\leq m$, there holds
\begin{equation}
	\label{eqn:weak_selberg}
|\alpha_{j,\pi}(p)|\leq p^{\theta_{\pi}},\qquad \re(\mu_{\pi}(j))\geq -\theta_{\pi}.
\end{equation}
The generalized Ramanujan conjecture (GRC) for $L(s,\pi)$ is the assertion that $\theta_{\pi}=0$.

The completed $L$-function $L(s,\pi)L(s,\pi_{\infty})$ is an entire function of order 1.\footnote{In this paper, $L(s,\pi)$ has no poles. We could easily modify the proofs to allow for a pole at $s=1$.}  Moreover, there exists a complex number $W(\pi)$ of modulus $1$ such that
\begin{equation}
	\label{eqn:functional_equation}
	L(s,\pi)L(s,\pi_{\infty})=W(\pi)\overline{L(1-\overline{s},\pi)L(1-\overline{s},\pi_{\infty})}.
\end{equation}
We define the analytic conductor $\CC$ of $L(s,\pi)$ by
\begin{equation}
	\label{eqn:analytic_conductor}
	\CC(it)=q_{\pi}\prod_{j=1}^m(|it+\mu_{\pi}(j)|+3),\qquad \CC=\CC(0).
\end{equation}

We require a hypothesis towards an averaged bound for the numbers $\alpha_{j,\pi}(p)$ in \eqref{eqn:weak_selberg}.  Let $\Lambda(n)$ be the von Mangoldt function.  Define the numbers $a_{\pi}(n)$ by
\begin{equation}
	\label{eqn:logarithmic_derivative}
	-\frac{L'}{L}(s,\pi)=\sum_{n=1}^{\infty}\frac{a_{\pi}(n)\Lambda(n)}{n^s}.
\end{equation}
Using \eqref{eqn:Dirichlet_Euler} in the region $\re(s)>1$, we see that
\[
a_{\pi}(n)=\begin{cases}
\sum_{j=1}^m \alpha_{j,\pi}(p)^k&\mbox{if there is a prime $p$ and an integer $k\geq 1$ such that $n=p^k$,}\\
0&\mbox{otherwise.}
\end{cases}
\]
We assume that there exist integers $\kappa,A_0\geq 1$ such that
\begin{equation}
	\label{eqn:weak_ramanujan}
	\textup{if $x\geq 1$, then }\sum_{x<n\leq ex}\frac{|a_{\pi}(n)|^2}{n}\Lambda(n)\leq \kappa^2+\frac{A_0}{\log ex}.
\end{equation}

Given $m\geq 1$, we denote by $\mathfrak{S}(m)$ the set of entire $L$-functions satisfying \eqref{eqn:Dirichlet_Euler}--\eqref{eqn:weak_ramanujan}.  For $x\geq 2$ and an $L$-function $L(s,\pi)\in\mathfrak{S}(m)$, we have the ``trivial'' bound (see \cref{lem:triv_bound_holds})
\begin{equation}
\label{eqn:trivial_bound_pi}
S(x,\pi)\ll_{\kappa,A_0,m}x(\log x)^{\kappa-1},\qquad S(x,\pi):=\sum_{n\leq x}\lambda_{\pi}(n).
\end{equation}
By modifying the ideas of Lamzouri in \cite{Lamzouri}, one can produce for each $m\geq 1$ an infinitude of $L(s,\pi)\in\mathfrak{S}(m)$ with $\kappa=m$ and $A_0\ll m^2$ such that if $\log x\asymp_{A_0,m} \log\log C(\pi)$, then $S(x,\pi)\asymp_{A_0,m}x(\log x)^{\kappa-1}$.\footnote{For each suitably large integer $k\equiv 0\pmod{4}$ and each integer $m\geq 2$, one can extend Lamzouri's work (with minor modifications) to the $m$-th symmetric power $L$-functions of the holomorphic cusp forms of weight $k$ on $\SL_2(\Z)$.}  On the other hand, GRH for $L(s,\pi)$ implies that if $\epsilon>0$ and $x\geq \CC^{\epsilon}$, then $S(x,\pi)=o_{\epsilon,\kappa,A_0,m}(x(\log x)^{\kappa-1})$.  Define
\begin{equation}
\label{eqn:ell_kappa_def}
\ell_{\kappa}:=\min\{\kappa(1-\sqrt{2/\pi}),1\}+1/1000.
\end{equation}
We prove the following theorem.
\begin{theorem}
\label{thm:main}
Let $\delta\in(0,\frac{1}{2}]$.  Let $m\geq 1$, and let $L(s,\pi)\in\mathfrak{S}(m)$ with analytic conductor $\CC$.  Let $\kappa$ and $A_0$ be as in \eqref{eqn:weak_ramanujan}.  There exist constants $\Cl[abcon]{thm1.3_1},\Cl[abcon]{thm1.3_2},\Cl[abcon]{thm1.3_3},\Cl[abcon]{thm1.3_4},\Cl[abcon]{thm1.3_5},\Cl[abcon]{thm1.3_6}>0$ (depending on $\delta$, $\kappa$, $m$, and $A_0$) such that the following is true.  Let $\CC\geq \Cr{thm1.3_1}$, $x\in[\exp((\log\CC)^{1-1/(50m)}),\sqrt{\CC}]$, $N\in[1,(\log x)^{1/(100m)}]$, and $L\in[\Cr{thm1.3_2}N^{2/\ell_{\kappa}}(\frac{\log \CC}{\log x})^{1-2\delta},\Cr{thm1.3_3}\log x]$. If $|S(x,\pi)|=x(\log x)^{\kappa-1}/N$, then there exists $\phi\in\R$ such that $|\phi|\leq\Cr{thm1.3_4}N$ and
\begin{equation*}
\#\Big\{\rho\colon L(\rho,\pi)=0,~|1+i\phi-\rho|<\frac{L}{\log x} \Big(\frac{\log \CC}{\log x}\Big)^{\delta}\Big\}\geq \Cr{thm1.3_5} L.
\end{equation*}
If $a_{\pi}(n)\in\R$ for all $n\geq 1$, then one may take $\phi=0$ when $L\geq \Cr{thm1.3_6}N^{6/\kappa}$.
\end{theorem}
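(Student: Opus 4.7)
The plan follows the broad Granville--Soundararajan strategy, combining an explicit-formula expansion of $S(x,\pi)$ with a sharp Halász-type inequality tailored to the class $\mathfrak{S}(m)$. A smoothed Perron identity and contour shift yield
\begin{equation*}
S(x,\pi) \;=\; -\sum_{\rho} w(\rho)\,\frac{x^\rho}{\rho} \;+\; E,
\end{equation*}
where $\rho$ ranges over the non-trivial zeros of $L(s,\pi)$, $w$ is a smooth weight of rapid decay in $|\Im \rho|$, and the error $E$ is controlled using the functional equation \eqref{eqn:functional_equation}, the moment hypothesis \eqref{eqn:weak_ramanujan}, and standard convexity bounds. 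In the range $x\in[\exp((\log \CC)^{1-1/(50m)}),\sqrt{\CC}]$ of the theorem, $E$ is negligible compared with $x(\log x)^{\kappa-1}/N$, so the zero sum must carry the full size of $|S(x,\pi)|$.

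The next step extracts a resonant frequency $\phi$. A Halász-type inequality for multiplicative-like sequences satisfying $\sum_{p\leq x}|\lambda_\pi(p)|^2/p\sim \kappa^2\log\log x$ (guaranteed by \eqref{eqn:weak_ramanujan}) converts the hypothesis $|S(x,\pi)|=x(\log x)^{\kappa-1}/N$ into the existence of $\phi$ with $|\phi|\leq \Cr{thm1.3_4}N$ satisfying
\begin{equation*}
\sum_{p\leq x}\frac{\kappa-\Re(\lambda_\pi(p)\,p^{-i\phi})}{p} \;\leq\; \frac{\log N}{\ell_\kappa}+O(1),
\end{equation*}
with $\ell_\kappa$ as in \eqref{eqn:ell_kappa_def}. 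This $\phi$ serves as the centre of the zero cluster we seek: $\lambda_\pi(p)$ pretends to behave like $\kappa\cdot p^{i\phi}$ on primes $p\leq x$.

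Combining the two, decompose the zero sum by distance of $\rho$ to $1+i\phi$. Zeros far from $\Re(s)=1$ or with $|\Im\rho-\phi|$ large contribute negligibly, by zero-density estimates together with the rapid decay of $w$. Hence zeros with $|\rho-(1+i\phi)|\leq L(\log\CC/\log x)^\delta/\log x$ carry essentially all of the mass. Applying Cauchy--Schwarz to this truncated sum and bounding its $\ell^2$-mass by a standard zero-density estimate then gives
\begin{equation*}
\#\Bigl\{\rho : L(\rho,\pi)=0,\ |(1+i\phi)-\rho|\leq \tfrac{L}{\log x}\bigl(\tfrac{\log\CC}{\log x}\bigr)^\delta\Bigr\} \;\geq\; \Cr{thm1.3_5}\,L,
\end{equation*}
whenever $L\geq \Cr{thm1.3_2}N^{2/\ell_\kappa}(\log\CC/\log x)^{1-2\delta}$. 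The exponent $2/\ell_\kappa$ arises from squaring the Halász bound inside Cauchy--Schwarz, while the $(\log\CC/\log x)^\delta$-factor comes from optimising the Perron contour shift in parallel with the zero-density truncation. When $a_\pi(n)\in\R$, the symmetry $\rho\leftrightarrow\bar\rho$ allows one to average the disks at $1\pm i\phi$ to recentre at $\phi=0$, at the cost of requiring $L\geq\Cr{thm1.3_6}N^{6/\kappa}$ so that both halves of the symmetric window contain enough zeros to be split evenly.

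The main technical challenge lies in the Halász step: proving the sharp inequality with constant $\ell_\kappa$ for general $L(s,\pi)\in\mathfrak{S}(m)$ requires extending Lamzouri's work \cite{Lamzouri} beyond cuspidal forms, using only the structural hypothesis \eqref{eqn:weak_ramanujan} rather than genuine multiplicativity, and carefully tracking constants through the Halász--Montgomery framework. A secondary difficulty is synchronising the contour-shift parameters in the explicit formula with the zero-density truncation so that the new $(\log\CC/\log x)^\delta$ dependence appears without diluting the exponent $2/\ell_\kappa$ of $N$; this is precisely where the quantitative improvement over \cite{GS_JEMS,FGGHT} comes from, and it demands choosing the contour shift $\eta\asymp (\log\CC/\log x)^\delta/\log x$ rather than the symmetric choice $\eta\asymp 1/\log x$ used in earlier work.
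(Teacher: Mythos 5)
Your proposal does not follow the paper's route, and the central step does not work as you've outlined it.

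\textbf{The zero-counting step is the gap.} You propose expanding $S(x,\pi)$ via a smoothed Perron formula as $-\sum_\rho w(\rho) x^\rho/\rho + E$, truncating to zeros near $1+i\phi$, and then extracting a lower bound on $\#\{\rho\}$ by Cauchy--Schwarz plus a zero-density estimate. This does not produce the conclusion. If the truncated zero sum has magnitude $\gg x(\log x)^{\kappa-1}/N$, Cauchy--Schwarz gives $x^2(\log x)^{2\kappa-2}/N^2 \ll \#\{\rho\}\cdot\sum_{\rho}|w(\rho)|^2 x^{2\beta}/|\rho|^2$, and the zeros you are trying to count can have $\beta$ arbitrarily close to $1$, so $x^{2\beta}\approx x^2$ per term. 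Without a strong (log-free) zero-density input in the range $\beta\geq 1-O(1/\log x)$, which is unavailable unconditionally, the inequality collapses to something useless like $\#\{\rho\}\gg 1/N$. This is essentially why the paper abandons the explicit-formula framework. As the opening of Section 4 states, the Hadamard-factorization route (which is what the explicit formula implicitly encodes) was used by Granville--Soundararajan and by \cite{FGGHT}; to get the quantitative improvement with the factor $(\log\CC/\log x)^\delta$ and proportion $\gg L$, the paper instead proves a hybrid Euler--Hadamard product (\cref{prop:HadamardEuler}), applies it at $s=1\pm\lambda+i(\phi+\xi)$, shows that the prime contribution cancels by the mean value theorem, and then directly bounds $\lambda y_0\ll \#\{\rho: |1+i(\phi+\xi)-\rho|\leq K/\log X\}$ through the estimates \eqref{eq:s1b} and \eqref{eq:s2b}. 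The parameter $k=\delta^{-1}+2$ in $K=(\cdot)^{1/(k-2)}$ is precisely where the $(\log\CC/\log x)^\delta$ emerges; your "synchronising the contour shift'' heuristic gestures at this but doesn't supply the mechanism. The Euler--Hadamard product is the essential ingredient and it is absent from your plan.

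\textbf{Two further issues.} First, the constant $\ell_\kappa$ does not arise from a Halász/pretentiousness inequality on primes as in your display; it enters through the Lipschitz estimate (\cref{lem:lipshitz}, extending \cite{GHS}) used inside \cref{prop:zerosfromabove} to control the Gaussian-smoothed partial sums, with the relevant Halász-type input (\cref{lem:relativehalasz1}, \cref{lem:twist}) giving $M\leq \log N + 2\log\log N$. Second, your $\phi=0$ argument for real $a_\pi(n)$ via the symmetry $\rho\leftrightarrow\bar\rho$ does not re-centre the disk: the symmetry shows that disks around $1\pm i\phi$ each contain many zeros, but says nothing about zeros near $1$ unless $|\phi|$ is already small. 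The paper instead proves directly that $|\phi|\leq c\, N^{6/\kappa}/y_0$ (inequality \eqref{eq:phifreal} of \cref{lem:twist}) via the cosine estimate \eqref{eqn:cosine_asymptotic}, and then enlarges the radius (and hence the lower bound on $L$) by the triangle inequality.
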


\begin{remark}
Let $\epsilon\in[(\log \CC)^{-1/(50m)},\frac{1}{2}]$.  \cref{thm:main} (with $L= (\epsilon^{1+\delta}/4)\log \CC$) implies that if 
$S(\CC^{\epsilon},\pi)$ is large, then a positive proportion ($\gg_{\delta,\kappa,m,A_0}\epsilon^{1+\delta}$) of the $O(\log \CC)$ nontrivial zeros $\beta+i\gamma$ of $L(s,\chi)$ with $0<\beta<1$ and $|\gamma-\phi|\leq \frac{1}{4}$ must satisfy $\beta\geq\frac{3}{4}$.  Since GRH for $L(s,\pi)$ asserts that $\beta=\frac{1}{2}$ for each such zero, we see that large partial sums of $\lambda_{\pi}(n)$ strongly repel the nontrivial zeros away from the critical line.  If the numbers $a_{\pi}(n)$ are real, then we may take $\phi=0$.
\end{remark}

\begin{remark}
The most general $L$-functions satisfying the hypotheses of \cref{thm:main} are described in \cite[Example 3]{Sound_weak}.  Let $\mathbb{A}_{\Q}$ be the adele ring of $\Q$, let $m\geq 1$, and let $\mathfrak{F}_m$ be the set of nontrivial cuspidal automorphic representations of $\GL_m(\mathbb{A}_{\Q})$ with unitary central character.  If $\pi\in\mathfrak{F}_m$ satisfies GRC, then $L(s,\pi)\in\mathfrak{S}(m)$ with $\kappa\leq m$ and $A_0\ll m^2$.  \cref{thm:main} now applies to $L(s,\pi)$, with $\phi=0$ when $\pi$ is self-dual.  If $\pi_1\in\mathfrak{F}_{m_1}$, $\pi_2\in\mathfrak{F}_{m_2}$, $\pi_2$ is not the dual of $\pi_1$, and $\pi_1$ satisfies GRC, then the Rankin--Selberg $L$-function $L(s,\pi_1\times\pi_2)$ lies in $\mathfrak{S}(m_1 m_2)$ with $\kappa\leq m$ and $A_0\ll_{m_1,\pi_2}1$.  One may take $\phi=0$ when $\pi_1$ and $\pi_2$ are self-dual.  This discussion subsumes \cref{cor:main,cor:main_2}.
\end{remark}

Our proof of \cref{thm:main} has two new components.  First, in \cref{sec:mean_values}, we modify the work in Granville, Harper, and Soundararajan \cite{GHS} (see also Mangerel \cite{Mang}) on mean values of multiplicative functions, leading to a broadly applicable version of Hal{\'a}sz's theorem that accommodates the class $\mathfrak{S}(m)$ of $L$-functions to which \cref{thm:main} applies.  The possibility of such an extension was suggested in \cite{GHS}.  This extension permits us to establish a version of \cite[Theorem 1.3]{GS_JEMS} for higher degree $L$-functions. We remark that the degree $2$ case has been worked out in \cite{FGGHT} under a stronger assumption on the coefficients, requiring pointwise bounds on $a_{\pi}(n)$ in place of the averaged statement given by \eqref{eqn:weak_ramanujan}.  Perhaps the most novel part in the first component of our proof is the verification that if the coefficients $a_f(n)$ are real for all $n\in\R$, then one may take $\phi=0$ in \cref{thm:main}, the key component being \cref{lem:twist}.

Second, we relate the zeros of $L$-functions to our work on mean values of multiplicative functions using an approach different from that of Granville and Soundararajan in \cite{GS_JEMS}.  The novelty here lies in the derivation of a new ``hybrid Euler--Hadamard product'' formula for $L(s,\pi)$ in \cref{sec:explicit}.  Pioneered by Gonek, Hughes, and Keating \cite{GonekHughesKeating} (whose work is based on that of Bombieri and Hejhal \cite{BombieriHejhal}), the hybrid Euler--Hadamard product was originally developed to study moments of the Riemann zeta function $\zeta(s)$ on the critical line $\re(s)=\frac{1}{2}$ by incorporating information from both the zeros of $\zeta(s)$ in the Hadamard product formulation and the primes in the Euler product formulation of $\zeta(s)$.  One can choose whether Euler--Hadamard product depends more on the contribution from the primes or the zeros by adjusting a single parameter.  In order to study the moments of $\zeta(s)$ on $\re(s)=\frac{1}{2}$, Gonek, Hughes, and Keating favored the contribution from the primes and then applied the Riemann Hypothesis to handle the zeros.  In several papers citing \cite{GonekHughesKeating} (e.g., \cite{BuiKeating,FarmerGonekHughes}), this approach is applied to study $L$-functions (either individually or in families) on the line $\re(s)=\frac{1}{2}$, where GRH handles the contribution from the zeros.  Here, we establish a hybrid Euler--Hadamard product formula for $L$-functions in $\mathfrak{S}(m)$, favoring the zeros instead of the primes, to study $L$-functions near the line $\re(s)=1$ without assuming GRH.  All of these facets appear to be new in our application of the ideas underlying the Euler--Hadamard formulation of $L$-functions.

Throughout this paper, all implied constants in our $O(\cdot)$, $\ll$, and $\gg$ notation depend on at most on the numbers $\kappa$, $A_0$, $m$, and $\delta$ in \cref{thm:main}.  All implied constants are effectively computable.  Each constant in the sequence $c_1,c_2,c_3,\ldots$ is effectively computable.

\subsection*{Acknowledgements}
B.K. and O.K. are greatful to the MPIM (Bonn) for providing excellent working conditions and support.  J.T. is partially supported by the National Science Foundation (DMS-2401311) and the Simons Foundation (MP-TSM-00002484).

\section{Mean values of multiplicative functions}
\label{sec:mean_values}

Let $f\colon\Z\to\mathbb{C}$ be a multiplicative function.  We write
\[
F(s)=\sum_{n=1}^{\infty}\frac{f(n)}{n^s},
\]
which we assume converges absolutely for $\re(s)>1$.  Let $\Lambda(n)$ be the von Mangoldt function, and as in \eqref{eqn:logarithmic_derivative}, we define the numbers $a_f(n)$ by
\[
-\frac{F'}{F}(s)=\sum_{n=1}^{\infty}\frac{a_{f}(n)\Lambda(n)}{n^s}.
\]

Part of our work is based on results from  \cite{GHS}, where the authors develop the theory of multiplicative functions, restricted to the class $\mathcal{C}(\kappa)$ of multiplicative functions $f$ such that if $n\geq 1$, then $|a_f(n)|\leq\kappa$.  When $f(n)=\lambda_{\pi}(n)$ with $\pi$ a cuspidal automorphic representation on $\mathrm{GL}(m)$, the condition $f\in\mathcal{C}(m)$ follows from GRC.  In \cite{GHS}, it is mentioned that one could establish corresponding results with the weaker assumption
that there exists some $A_0\geq 1$ such that
\begin{equation}
\label{eqn:weak_ramanujan_2}
\sum_{x<n\leq ex}\frac{|a_f(n)|^2 \Lambda(n)}{n}\leq \kappa^2+\frac{A_0}{\log(ex)}\qquad \textup{for all $x\geq 1$}
\end{equation}
(cf. \eqref{eqn:weak_ramanujan}).  We denote the class of multiplicative functions $f$ which satisfy \eqref{eqn:weak_ramanujan_2} by $\mathcal{C}_w(\kappa)$.

\subsection{Preliminary estimates}
We collect some estimates from \cite{Sound_weak} for $f\in\mathcal{C}_w(\kappa)$.

\begin{lemma}[{\cite[Lemma 3.2]{Sound_weak}}]
\label{lem:sound_weak_lemma_3.2}
	Let $f\in\mathcal{C}_w(\kappa)$.  If $\sigma\in(1,2]$, then $|F(\sigma+it)|\ll_{\kappa,A_0} (\sigma-1)^{-\kappa}$.
\end{lemma}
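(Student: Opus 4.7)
The plan is to bound $\log|F(\sigma+it)|$ via the logarithmic derivative of $F$. Since $f(1)=1$ and $F(s)$ converges absolutely for $\re(s)>1$, we have $F(s)\to 1$ as $\re(s)\to\infty$, and integrating the identity $-F'(s)/F(s)=\sum_n a_f(n)\Lambda(n)/n^s$ from $\sigma+it$ to $+\infty+it$ gives
\begin{equation*}
\log F(\sigma+it)=\sum_{n\geq 2}\frac{a_f(n)\Lambda(n)}{n^{\sigma+it}\log n}.
\end{equation*}
Since $|F(\sigma+it)|=\exp(\re\log F(\sigma+it))$, it suffices, after bounding the summand in absolute value and exponentiating, to show that
\begin{equation*}
S(\sigma):=\sum_{n\geq 2}\frac{|a_f(n)|\Lambda(n)}{n^\sigma\log n}\leq\kappa\log\frac{1}{\sigma-1}+O_{\kappa,A_0}(1)
\end{equation*}
uniformly for $\sigma\in(1,2]$ (the dependence on $t$ drops out trivially after taking absolute values).

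To estimate $S(\sigma)$ I would split dyadically along the intervals $I_k:=(e^k,e^{k+1}]$ for $k\geq 0$, which match the shape of the hypothesis \eqref{eqn:weak_ramanujan_2}. On each $I_k$, Cauchy--Schwarz gives
\begin{equation*}
\sum_{n\in I_k}\frac{|a_f(n)|\Lambda(n)}{n^\sigma\log n}\leq\left(\sum_{n\in I_k}\frac{|a_f(n)|^2\Lambda(n)}{n}\right)^{1/2}\left(\sum_{n\in I_k}\frac{\Lambda(n)}{n^{2\sigma-1}(\log n)^2}\right)^{1/2}.
\end{equation*}
By \eqref{eqn:weak_ramanujan_2} the first factor is at most $(\kappa^2+A_0/(k+1))^{1/2}\leq\kappa+\sqrt{A_0/(k+1)}$. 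For the second, on $I_k$ we have $\log n\asymp k+1$ and $n^{2\sigma-1}\geq e^{k(2\sigma-1)}$, so the Chebyshev bound $\sum_{n\in I_k}\Lambda(n)\ll e^k$ yields a factor of order $e^{-k(\sigma-1)}/(k+1)$; the contribution from $k=0$ is $O_{\kappa,A_0}(1)$ directly from \eqref{eqn:weak_ramanujan_2} at $x=1$.

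Summing the pieces for $k\geq 1$ gives
\begin{equation*}
S(\sigma)\ll\kappa\sum_{k\geq 1}\frac{e^{-k(\sigma-1)}}{k}+\sqrt{A_0}\sum_{k\geq 1}\frac{e^{-k(\sigma-1)}}{k^{3/2}}+O_{\kappa,A_0}(1),
\end{equation*}
where the second series is uniformly bounded, while the first equals $-\log(1-e^{-(\sigma-1)})=\log\frac{1}{\sigma-1}+O(1)$ for $\sigma\in(1,2]$. Exponentiating then yields $|F(\sigma+it)|\ll_{\kappa,A_0}(\sigma-1)^{-\kappa}$, as required. The main delicacy is the allocation inside Cauchy--Schwarz: absorbing the $\log n$ into the denominator of the second factor is what produces the geometric decay in $k$ needed to recover the logarithmic singularity with exactly the exponent $\kappa$; distributing the logs differently would yield only the weaker bound $(\sigma-1)^{-1/2}$ with the wrong dependence on $\kappa$.
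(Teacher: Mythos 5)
Your overall strategy---exponentiate a bound on $\log|F|$ via the identity $\log F(s)=\sum_{n\geq 2}a_f(n)\Lambda(n)/(n^s\log n)$ and then run a dyadic decomposition against \eqref{eqn:weak_ramanujan_2}---is the right one, but there is a genuine constant-loss in your Cauchy--Schwarz allocation that breaks the final step. You bound the second factor $\bigl(\sum_{n\in I_k}\Lambda(n)/(n^{2\sigma-1}(\log n)^2)\bigr)^{1/2}$ by discarding all of the $n$-dependence: writing $n^{2\sigma-1}\geq e^{k(2\sigma-1)}$, $(\log n)^2\asymp(k+1)^2$, and invoking Chebyshev $\sum_{n\in I_k}\Lambda(n)\ll e^k$. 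The implicit constant there is a fixed $C>1$; indeed the prime number theorem gives $\sum_{n\in I_k}\Lambda(n)\sim(e-1)e^k$, so even the asymptotic version yields the factor $\sqrt{e-1}\approx 1.31$ after the square root. Carrying this through, you actually prove $S(\sigma)\leq C\kappa\log\frac{1}{\sigma-1}+O_{\kappa,A_0}(1)$ with $C>1$, and exponentiation then gives only $|F(\sigma+it)|\ll_{\kappa,A_0}(\sigma-1)^{-C\kappa}$. The exponent $\kappa$ in the lemma must be exact: it is what yields $|F(1+1/\log x+it)|\ll(\log x)^{\kappa}$ in \cref{lem:lemma4.1GHS,lem:triv_bound_holds} and the interpolation input \eqref{eqn:convexity_1}; replacing $\kappa$ by $C\kappa$ there would not suffice.

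The fix is to apply Cauchy--Schwarz once, globally, pairing $|a_f(n)|$ and $1$ against the \emph{same} weight $\Lambda(n)/(n^\sigma\log n)$ on both sides, which is precisely what the two estimates of \cref{lem:sigma_near_1_bounds} encode. First run the dyadic argument on the squared sum: for $n\in I_k$ one uses only the ratio bound $1/(n^{\sigma-1}\log n)\leq e^{-k(\sigma-1)}/k$ together with \eqref{eqn:weak_ramanujan_2} (no prime counting), giving $\sum_{n\geq 2}|a_f(n)|^2\Lambda(n)/(n^\sigma\log n)\leq\kappa^2\sum_{k\geq 1}e^{-k(\sigma-1)}/k+O_{\kappa,A_0}(1)=\kappa^2\log\frac{1}{\sigma-1}+O_{\kappa,A_0}(1)$, with the constant $\kappa^2$ \emph{exact}. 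Then Cauchy--Schwarz over all $n$ against $\sum_{n\geq 2}\Lambda(n)/(n^\sigma\log n)=\log\zeta(\sigma)=\log\frac{1}{\sigma-1}+O(1)$, whose leading constant is also exactly $1$. Since $\sqrt{(\kappa^2 L+O(1))(L+O(1))}=\kappa L+O_{\kappa,A_0}(1)$ with $L=\log\frac{1}{\sigma-1}$, this gives $S(\sigma)\leq\kappa\log\frac{1}{\sigma-1}+O_{\kappa,A_0}(1)$ and hence the lemma upon exponentiating. The point you missed is that the ``Chebyshev-type'' ingredient in your second factor must be replaced by the exact identity $\sum\Lambda(n)/(n^\sigma\log n)=\log\zeta(\sigma)$; your dyadic-then-Cauchy--Schwarz order of operations forces you into needing the sharp leading constant in $\sum_{n\in I_k}\Lambda(n)/(n(\log n)^2)\sim 1/k^2$, which Chebyshev cannot deliver.
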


\begin{lemma}[{\cite[Lemma 3.3]{Sound_weak}}]
\label{lem:sound_weak_lemma_3.3}
	Let $f\in\mathcal{C}_w(\kappa)$.  If $x\geq 2$ and $1\leq y\leq x$, then
	\[
	\sum_{x<n\leq x+y}|f(n)|\ll_{\kappa,A_0} (yx)^{1/2}(\log x)^{\kappa^2/2}.
	\]
\end{lemma}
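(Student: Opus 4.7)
The plan is to reduce the estimate to a second moment bound via Cauchy--Schwarz, and then control the second moment through Rankin's device applied to the Dirichlet series of $|f(n)|^2$. Applying Cauchy--Schwarz and enlarging the range of summation (using $y\leq x$), we obtain
\[
\sum_{x<n\leq x+y}|f(n)|\leq y^{1/2}\Big(\sum_{n\leq 2x}|f(n)|^2\Big)^{1/2},
\]
so it suffices to establish the second moment bound
\[
\sum_{n\leq 2x}|f(n)|^2\ll_{\kappa,A_0} x(\log x)^{\kappa^2}.
\]

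To prove this, set $G(s)=\sum_{n\geq 1}|f(n)|^2 n^{-s}$, which has an Euler product in the half-plane of absolute convergence, and fix $\sigma=1+1/\log x$. Rankin's trick yields $\sum_{n\leq 2x}|f(n)|^2\leq (2x)^\sigma G(\sigma)\ll x G(\sigma)$, so it remains to prove $G(\sigma)\ll(\log x)^{\kappa^2}$. Expanding $\log G(\sigma)=\sum_p\log\bigl(1+\sum_{k\geq 1}|f(p^k)|^2 p^{-k\sigma}\bigr)$ and using $\log(1+u)\leq u$ for $u\geq 0$, the principal contribution comes from the primes. Since $f(p)=a_f(p)$, dyadic summation of \eqref{eqn:weak_ramanujan_2} restricted to $n=p$, combined with partial summation (which converts $\sum_{p\leq y}|f(p)|^2\log p/p\leq \kappa^2\log y+O(\log\log y)$ into a bound on $\sum_{p\leq y}|f(p)|^2/p$), yields
\[
\sum_p\frac{|f(p)|^2}{p^\sigma}\leq \kappa^2\log\log x+O_{\kappa,A_0}(1).
\]
To show that the prime-power tail $\sum_p\sum_{k\geq 2}|f(p^k)|^2 p^{-k\sigma}$ contributes only $O_{\kappa,A_0}(1)$, I would exploit the Newton-type identity $kf(p^k)=\sum_{j=1}^k a_f(p^j)f(p^{k-j})$ (a direct consequence of $-F_p'/F_p$ expressed via the Euler factor $F_p(s)$) to bound $|f(p^k)|^2$ in terms of the $|a_f(p^j)|^2$ for $j\leq k$, and then apply \eqref{eqn:weak_ramanujan_2} dyadically; the decay $p^{-k\sigma}$ at $k\geq 2$ absorbs the polynomial-in-$k$ losses from the recursion. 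Exponentiating, $G(\sigma)\ll(\log x)^{\kappa^2}$, as required.

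The main obstacle is precisely this prime-power tail. Since the hypothesis \eqref{eqn:weak_ramanujan_2} is an averaged bound on power sums in the local Satake parameters (through the identity $a_f(p^k)=\sum_j\alpha_{j,\pi}(p)^k$), while the Euler factor of $G(s)$ sees pointwise values of $f(p^k)$ (complete symmetric polynomials in the same parameters), some combinatorial conversion is unavoidable. The Newton recursion above provides this conversion at the cost of a polynomial-in-$k$ factor, and one must carefully verify that the convergence factor $p^{-k\sigma}$ and the dyadic structure of \eqref{eqn:weak_ramanujan_2} together are strong enough to bound the resulting sum by $O_{\kappa,A_0}(1)$. Once this is done, combining with the Cauchy--Schwarz step produces the claimed bound $(yx)^{1/2}(\log x)^{\kappa^2/2}$.
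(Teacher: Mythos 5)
The paper does not give its own proof of this lemma --- it is cited verbatim from \cite[Lemma~3.3]{Sound_weak}. Your overall reduction (Cauchy--Schwarz to a second moment, Rankin's device with $\sigma=1+1/\log x$, then estimating $\log G(\sigma)$ from the Euler product of $G(s)=\sum|f(n)|^2n^{-s}$) is the natural strategy and is the one Soundararajan uses. The prime contribution $\sum_p|f(p)|^2/p^{\sigma}\le\kappa^2\log\log x+O(1)$ is handled correctly via \eqref{eqn:weak_ramanujan_2} and partial summation.

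The genuine gap is exactly where you locate it, but it is more serious than you suggest, and your proposed fix does not close it. Two problems with the Newton-recursion route. First, squaring $kf(p^k)=\sum_{j=1}^k a_f(p^j)f(p^{k-j})$ and applying Cauchy--Schwarz already produces, at $k=2$, the term $|a_f(p)|^4$, a fourth moment of $a_f(p)$ which is simply not controlled by the $L^2$-averaged hypothesis \eqref{eqn:weak_ramanujan_2}; there is no reason $\sum_p|a_f(p)|^4/p^{2\sigma}$ should be $O(1)$. Second, even the terms $|a_f(p^j)|^2$ with $j\ge2$ give trouble: the pointwise content of \eqref{eqn:weak_ramanujan_2} at $n=p^j$ is only $|a_f(p^j)|^2\le(\kappa^2+A_0)\,p^j/\log p$, and summing $|a_f(p^2)|^2/p^{2\sigma}$ dyadically with $\sigma=1+1/\log x$ produces an extra $\log\log x$, which would contaminate the exponent $\kappa^2$ after exponentiating. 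So the claim that ``the decay $p^{-k\sigma}$ at $k\ge2$ absorbs the polynomial-in-$k$ losses'' does not survive a careful accounting. The deeper reason is that the class $\mathcal{C}_w(\kappa)$ as defined only by \eqref{eqn:weak_ramanujan_2} is too weak a hypothesis: Soundararajan's Lemma~3.3 is proved for $L$-functions in the full class of \S1, i.e.\ with a degree-$m$ Euler product \eqref{eqn:Dirichlet_Euler} whose Dirichlet series and Euler product converge absolutely for $\Re(s)>1$ and whose local Satake parameters obey the pointwise bound \eqref{eqn:weak_selberg} with $\theta_\pi\le1-\tfrac1m$. Your argument never invokes \eqref{eqn:weak_selberg} or the degree $m$, and without those structural inputs the prime-power tail $\sum_p\sum_{k\ge2}|f(p^k)|^2p^{-k\sigma}$ cannot be bounded by $O_{\kappa,A_0}(1)$. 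To complete the proof along these lines you would need to replace the Newton-identity argument by one that exploits $|\alpha_{j,\pi}(p)|\le p^{\theta_\pi}$ and the absolute convergence of the Euler product to control $|f(p^k)|=|h_k(\alpha_{1,\pi}(p),\dots,\alpha_{m,\pi}(p))|$ directly, which is what Soundararajan's framework does.
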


\begin{lemma}[{\cite[Equations 3.3 and 3.4]{Sound_weak}}]
\label{lem:sigma_near_1_bounds}
	If $f\in\mathcal{C}_w(\kappa)$, then for any $\sigma\in(1,2]$, we have
	\[
	\sum_{n=2}^{\infty}\frac{|a_f(n)|^2\Lambda(n)}{n^{\sigma}\log n}\leq \kappa^2\log\frac{1}{\sigma-1}+O_{\kappa,A_0}(1),\quad \sum_{n=2}^{\infty}\frac{|a_f(n)|\Lambda(n)}{n^{\sigma}\log n}\leq \kappa\log\frac{1}{\sigma-1}+O_{\kappa,A_0}(1).
	\]
\end{lemma}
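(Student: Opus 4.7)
\medskip

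\noindent\textbf{Plan of proof.} The plan is to obtain both estimates by a dyadic decomposition of the interval $[2,\infty)$, using the hypothesis \eqref{eqn:weak_ramanujan_2} to control the $|a_f|^2$-moment on each dyadic block, and then summing a resulting geometric-type series to extract the factor $\log\tfrac{1}{\sigma-1}$. For $k\geq 1$, set $I_k=(e^{k-1},e^k]$, so that on $I_k$ one has $n^{\sigma-1}\asymp e^{k(\sigma-1)}$ and $\log n\asymp k$. The hypothesis \eqref{eqn:weak_ramanujan_2} applied with $x=e^{k-1}$ yields
\[
\sum_{n\in I_k}\frac{|a_f(n)|^2\Lambda(n)}{n}\leq \kappa^2+\frac{A_0}{k}.
\]

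For the first inequality, I would write $n^{\sigma}=n\cdot n^{\sigma-1}$ and bound the contribution of $I_k$ by
\[
\frac{1}{k\,e^{(k-1)(\sigma-1)}}\Big(\kappa^2+\frac{A_0}{k}\Big).
\]
Summing over $k\geq 1$ separates into a main term
\[
\kappa^2\sum_{k\geq 1}\frac{e^{-k(\sigma-1)}}{k}=\kappa^2\log\frac{1}{1-e^{-(\sigma-1)}}=\kappa^2\log\frac{1}{\sigma-1}+O(1)
\]
and an error $O_{\kappa,A_0}\!\Big(\sum_k k^{-2}e^{-k(\sigma-1)}\Big)=O_{\kappa,A_0}(1)$, which gives the first bound after accounting for the $k=1$ term separately.

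For the second inequality, the factor $|a_f(n)|$ (rather than $|a_f(n)|^2$) is handled by Cauchy--Schwarz on each block, applied to the factorization $\frac{|a_f(n)|\Lambda(n)}{n^{\sigma}\log n}=\frac{|a_f(n)|\Lambda(n)^{1/2}}{n^{\sigma/2}}\cdot\frac{\Lambda(n)^{1/2}}{n^{\sigma/2}\log n}$. The first factor is controlled by \eqref{eqn:weak_ramanujan_2} and gives $e^{-k(\sigma-1)/2}(\kappa+O(k^{-1/2}))$, while the second factor is controlled by the Mertens-type identity $\sum_{n\in I_k}\Lambda(n)/n=1+O(1)$, yielding $e^{-k(\sigma-1)/2}/k\cdot O(1)$. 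Combining and summing over $k$ gives $\kappa\sum_{k\geq 1}e^{-k(\sigma-1)}/k+O_{\kappa,A_0}(1)=\kappa\log\tfrac{1}{\sigma-1}+O_{\kappa,A_0}(1)$, as desired.

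The only delicate step is bookkeeping the error from the ``$A_0/k$'' piece on each dyadic block and confirming that its contribution is bounded uniformly in $\sigma\in(1,2]$; this is where the factor $1/\log(ex)$ in \eqref{eqn:weak_ramanujan_2} is essential, since without it one would pick up an extra $\log\tfrac{1}{\sigma-1}$. Apart from that, the argument is a routine application of the dyadic method and Cauchy--Schwarz, and mirrors the standard derivation of these bounds in \cite{Sound_weak}.
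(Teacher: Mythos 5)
The paper does not prove this lemma; it cites \cite[Eqs.~3.3, 3.4]{Sound_weak}, so there is no in-paper argument to compare against. Your general plan (dyadic blocks $I_k=(e^{k-1},e^k]$, the hypothesis \eqref{eqn:weak_ramanujan_2} applied at $x=e^{k-1}$, then a geometric-harmonic sum) is the right one, and your treatment of the first inequality is essentially correct: the block contribution is $\leq (k-1)^{-1}e^{-(k-1)(\sigma-1)}\big(\kappa^2+\tfrac{A_0}{k}\big)$ for $k\geq 2$, the $A_0/k$ piece sums to $O_{A_0}(1)$, and the main piece reindexes to $\kappa^2\log\frac{1}{1-e^{-(\sigma-1)}}=\kappa^2\log\frac{1}{\sigma-1}+O(1)$ (note you should carry $k-1$, not $k$, in the denominator coming from $1/\log n$; as written your per-block bound and your summed series use inconsistent exponents, though after reindexing the answer is the same).

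There is, however, a genuine gap in your argument for the second inequality. You apply Cauchy--Schwarz block by block and then control the resulting ``Mertens factor'' only as $\sum_{n\in I_k}\Lambda(n)/n = 1+O(1)$, concluding the second factor is $e^{-k(\sigma-1)/2}/k\cdot O(1)$. But then the product of the two factors on block $I_k$ is $\frac{e^{-(k-1)(\sigma-1)}}{k-1}\cdot O(\kappa)$, and the summed bound is $O(\kappa)\log\frac{1}{\sigma-1}+O(1)$, \emph{not} $\kappa\log\frac{1}{\sigma-1}+O(1)$; the $O(1)$ from Mertens is multiplying the main term, not adding to it. To rescue a block-wise argument you would need a sharper Mertens of the form $\sum_{n\in I_k}\Lambda(n)/n=1+\delta_k$ with $\sum_k|\delta_k|/k<\infty$, which requires prime-number-theorem input you never invoke. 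The cleaner route, which gives the sharp constant at once, is a single \emph{global} Cauchy--Schwarz:
\[
\sum_{n\geq 2}\frac{|a_f(n)|\Lambda(n)}{n^{\sigma}\log n}\leq \Big(\sum_{n\geq 2}\frac{|a_f(n)|^2\Lambda(n)}{n^{\sigma}\log n}\Big)^{1/2}\Big(\sum_{n\geq 2}\frac{\Lambda(n)}{n^{\sigma}\log n}\Big)^{1/2}.
\]
The first factor is $\leq\big(\kappa^2\log\tfrac{1}{\sigma-1}+O(1)\big)^{1/2}$ by the inequality you have just proved, and the second equals $(\log\zeta(\sigma))^{1/2}=\big(\log\tfrac{1}{\sigma-1}+O(1)\big)^{1/2}$ by the Laurent expansion of $\zeta$ near $s=1$; multiplying out (and treating the case where $\log\tfrac{1}{\sigma-1}$ is bounded separately) yields $\kappa\log\tfrac{1}{\sigma-1}+O_{\kappa,A_0}(1)$ as desired.
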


\cref{lem:sigma_near_1_bounds} furnishes a strong bound for $F(s)$ near the line $\re(s)=1$.

\begin{lemma}
\label{lem:lemma4.1GHS}
	Let $f\in\mathcal{C}_w(\kappa)$.  Select a real number $t_1$ with $|t_1|\leq (\log x)^{\kappa}$ that maximizes $|F(1+1/\log x+it_1)|$.  If $|t|\leq(\log x)^{\kappa}$, then
	\[
	|F(1+1/\log x+it)|\ll_{\kappa,A_0}(\log x)^{\kappa\sqrt{2/\pi}}\Big(\frac{\log x}{1+|t-t_1|\log x}+(\log\log x)^2\Big)^{\kappa(1-\sqrt{2/\pi})}.
	\]
\end{lemma}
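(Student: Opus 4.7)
Setting $\sigma=1+1/\log x$, the plan is to follow the proof of \cite[Lemma 4.1]{GHS}, which establishes the identical bound for the pointwise class $\mathcal{C}(\kappa)$, and to replace each appeal to $|a_f(n)|\leq\kappa$ by an averaged appeal to \eqref{eqn:weak_ramanujan_2} mediated by \cref{lem:sigma_near_1_bounds}.  Starting from \eqref{eqn:logarithmic_derivative}, one has, for every $t\in\R$,
\[
\log|F(\sigma+it)|=\sum_{n\geq 2}\frac{\Lambda(n)\,\Re\bigl(a_f(n)\,n^{-it}\bigr)}{(\log n)\,n^{\sigma}}.
\]

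The core of the argument is a Hal\'asz-type pointwise majorization, valid for any unit complex number $e^{i\psi_0}$, of the form
\[
\Re(z e^{-i\psi})\leq \sqrt{2/\pi}\,|z|+(1-\sqrt{2/\pi})\,\Re(z e^{-i\psi_0})+E(z;\psi,\psi_0),
\]
applied with $z=a_f(n)$, $\psi=t\log n$, $\psi_0=t_1\log n$.  The constant $\sqrt{2/\pi}$ appears as the $L^1/L^2$ ratio of a Gaussian and is the optimal exponent in this Hal\'asz step.  Integrating the inequality against the positive measure $\Lambda(n)/((\log n)\,n^\sigma)$ produces
\[
\log|F(\sigma+it)|\leq \sqrt{2/\pi}\sum_{n\geq 2}\frac{|a_f(n)|\Lambda(n)}{n^\sigma\log n}+(1-\sqrt{2/\pi})\log|F(\sigma+it_1)|+\mathcal{E}(t),
\]
where $\mathcal{E}(t)$ gathers the twisting error.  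The first sum contributes the $(\log x)^{\kappa\sqrt{2/\pi}}$ prefactor via the second bound of \cref{lem:sigma_near_1_bounds}; the maximality of $t_1$, combined with \cref{lem:sound_weak_lemma_3.2}, keeps $\log|F(\sigma+it_1)|$ at size $O(\kappa\log\log x)$.  Finally, $\mathcal{E}(t)$ is reorganized into the localized factor via Cauchy--Schwarz on the twisted logarithmic sum, together with the classical Mertens-type estimate
\[
\sum_{p\leq x}\frac{\cos(\tau\log p)}{p}=\log\frac{\log x}{1+|\tau|\log x}+O((\log\log x)^2),\qquad |\tau|\leq(\log x)^{\kappa},
\]
used with $\tau=t-t_1$.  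The right-hand side is the direct source of the $(\log x)/(1+|t-t_1|\log x)$ term and its additive $(\log\log x)^2$ companion, raised to the complementary exponent $\kappa(1-\sqrt{2/\pi})$.

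The main obstacle, compared to \cite[Lemma 4.1]{GHS}, is the Cauchy--Schwarz step controlling $\mathcal{E}(t)$.  In the pointwise class $\mathcal{C}(\kappa)$ one bounds $\sum |a_f(n)|^2\Lambda(n)/(n^\sigma\log n)$ by $\kappa\sum|a_f(n)|\Lambda(n)/(n^\sigma\log n)$, reducing to first-moment control; in $\mathcal{C}_w(\kappa)$ this reduction is unavailable, and one must directly invoke the first bound of \cref{lem:sigma_near_1_bounds}, which follows from \eqref{eqn:weak_ramanujan_2} by summation by parts along dyadic intervals $(y,ey]$.  This introduces at most a $\log\log x$ multiplicative slack, which is harmlessly absorbed into the $(\log\log x)^2$ error inside the parenthesis of the stated bound.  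Combining the pieces reproduces the exponents $\kappa\sqrt{2/\pi}$ and $\kappa(1-\sqrt{2/\pi})$ exactly, yielding the claimed inequality uniformly for $|t|\leq(\log x)^{\kappa}$.
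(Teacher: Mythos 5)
The proposal does not match the paper's mechanism, and its central step is not a valid inequality. The paper's argument runs as follows: by the maximality of $t_1$ one has $\log|F(\sigma+it)|\le\tfrac12\bigl(\log|F(\sigma+it)|+\log|F(\sigma+it_1)|\bigr)$; that average is rewritten exactly as $\Re\sum_{n}\frac{a_f(n)\Lambda(n)}{n^{\sigma}\log n}\,n^{-i(t+t_1)/2}\cos(\tau\log n)$ with $\tau=|t-t_1|/2$, bounded by $\sum_n\frac{|a_f(n)|\,|\cos(\tau\log n)|\Lambda(n)}{n^{\sigma}\log n}$, and \emph{then} a single Cauchy--Schwarz splits this into $\bigl(\sum_n\tfrac{\cos^2(\tau\log n)\Lambda(n)}{n^{\sigma}\log n}\bigr)^{1/2}\bigl(\sum_n\tfrac{|a_f(n)|^2\Lambda(n)}{n^{\sigma}\log n}\bigr)^{1/2}$. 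The second factor is $\kappa^2\log\log x+O(1)$ by \cref{lem:sigma_near_1_bounds}; the first factor, after $\cos^2\le|\cos|$, is controlled by the mean-value estimate \eqref{eqn:cosine_asymptotic} for $\sum_{p\le x}|\cos(\tau\log p)|/p$, whose leading coefficient $2/\pi$ is the average of $|\cos|$ over a period. The exponents $\sqrt{2/\pi}$ and $1-\sqrt{2/\pi}$ then emerge from multiplying these two square roots; they are \emph{not} attached term-by-term to a pointwise decomposition.

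Your proposal instead rests on a ``Hal\'asz-type pointwise majorization'' $\Re(ze^{-i\psi})\le\sqrt{2/\pi}\,|z|+(1-\sqrt{2/\pi})\,\Re(ze^{-i\psi_0})+E$ with $z=a_f(n)$, and then integrates it. This inequality is not a real tool: writing $z=|z|e^{i\alpha}$ shows the left side minus the two main terms on the right can be as large as $2(1-\sqrt{2/\pi})|z|$, so the undefined ``error'' $E$ would have to be of the same order as the quantity being estimated, and you give no way to control it. Moreover, nothing in your scheme uses the maximality of $t_1$ to produce a nonpositive contribution, yet your first two terms alone already give the trivial exponent $\kappa$, so the burden of producing the localized savings falls entirely on $\mathcal{E}(t)$ -- which you never actually bound. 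Two further misattributions: the constant $2/\pi$ comes from the mean of $|\cos|$ (the Mertens estimate you need is for $\sum_p|\cos(\tau\log p)|/p$, not $\sum_p\cos(\tau\log p)/p$, and the error there is $O(1)$, not $O((\log\log x)^2)$), and the $(\log\log x)^2$ in the final bound is not ``absorbed Cauchy--Schwarz slack'' but the lower cutoff $\log y\gg(\log\log x)^2$ in \eqref{eqn:cosine_asymptotic}. You do correctly locate where the weak hypothesis enters -- replacing the pointwise bound $|a_f(n)|\le\kappa$ by Cauchy--Schwarz against \cref{lem:sigma_near_1_bounds} -- but the overall route you sketch does not assemble into a proof.
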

\begin{proof}
This is similar to the proof of \cite[Lemma 4.1]{GHS}, except in our use of \eqref{eqn:weak_ramanujan_2}.  If $|t-t_1|\geq 1/\log x$, then the stated estimate is immediate from \cref{lem:sound_weak_lemma_3.2} with $\sigma=1+1/\log x$.  We now suppose that $(\log x)^{\kappa}\gg|t-t_1|\geq 1/\log x$.  By the maximality of $t_1$, we have $|F(1+1/\log x+it)|\leq |F(1+1/\log x+it_1)|$.  In light of this, we set $\tau=|t_1-t|/2$.
	
	From the inequality $\cos^2(\tau \log n)\leq |\cos(\tau\log n)|$ combined with the Cauchy--Schwarz inequality and \eqref{eqn:weak_ramanujan_2}, we find that
\begin{align*}
\log|F(1+\tfrac{1}{\log x}+it)|&\leq \frac{1}{2}(\log|F(1+\tfrac{1}{\log x}+it_1)|+\log|F(1+\tfrac{1}{\log x}+it)|)\\
&= \re\Big(\sum_{n\geq 2}\frac{a_f(n) \Lambda(n)}{n^{1+1/\log x}\log n}\cdot n^{i(t_1+t)/2}\cdot\frac{n^{i\tau}+n^{-i\tau}}{2}\Big)\\
&\leq \sum_{n\geq 2}\frac{|a_f(n)| \Lambda(n)}{n^{1+1/\log x}\log n}|\cos(\tau \log n)|\\
&\leq \Big(\sum_{n\geq 2}\frac{\cos^2(\tau\log n)\Lambda(n)}{n^{1+1/\log x}\log n}\Big)^{\frac{1}{2}}\Big(\sum_{n\geq 2}\frac{|a_f(n)|^2 \Lambda(n)}{n^{1+1/\log x}\log n}\Big)^{\frac{1}{2}}\\
&\leq \Big(\sum_{p\leq  x}\frac{|\cos(\tau\log p)|}{p}+O(1)\Big)^{\frac{1}{2}}\Big(\sum_{n\geq 2}\frac{|a_f(n)|^2 \Lambda(n)}{n^{1+1/\log x}\log n}\Big)^{\frac{1}{2}}.
\end{align*}
We apply \cref{lem:sigma_near_1_bounds} with $\sigma=1+1/\log x$ and use partial summation to conclude that
\[
\sum_{n\geq 2}\frac{|a_f(n)|^2 \Lambda(n)}{n^{1+1/\log x}
\log n}\leq \kappa^2\log \log x+ O_{\kappa,A_0}(1).
\]
By \cite[Lemma 2.3]{GS_decay}, there exists a constant $\Cr{Lem2.4_large}>0$ such that if
\[
y=\max\{\exp(\Cl[abcon]{Lem2.4_large}(\log \log x)^2),\exp(1/|\tau|)\},
\]
then
\begin{equation}
\label{eqn:cosine_asymptotic}
\sum_{n\leq x}\frac{\cos^2(\tau\log n)\Lambda(n)}{n\log n}\leq \sum_{p\leq x}\frac{|\cos(\tau\log p)|}{p}+O(1)\leq \frac{2}{\pi}\log\frac{\log x}{\log y}+\log \log y+O(1).
\end{equation}
Combining the above estimates yields the lemma.
\end{proof}

\cref{lem:lemma4.1GHS} leads us to the following Lipschitz estimate.

\begin{lemma}
\label{lem:lipshitz}
Let $f\in\mathcal{C}_w(\kappa)$, and let $t_1$ be as in \cref{lem:lemma4.1GHS}.  If $1\leq \omega\leq x^{1/3}$, then
\begin{multline*}
\Big|\frac{1}{x}\sum_{n\leq x}\frac{f(n)}{n^{it_1}}-\frac{\omega}{x}\sum_{n\leq x/\omega}\frac{f(n)}{n^{it_1}}\Big|\\
\ll \Big(\frac{\log \omega+(\log \log x)^2}{\log x}\Big)^{\min \{1,\kappa(1-\sqrt{2/\pi})\}}(\log x)^{\kappa-1} \log\Big(\frac{\log x}{1+\log\omega}\Big).
\end{multline*}
\end{lemma}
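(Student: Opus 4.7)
The plan is to express the difference as a contour integral via a truncated Perron formula, then bound this integral using the Halász-type estimate \cref{lem:lemma4.1GHS}. This parallels the approach of Granville--Harper--Soundararajan in \cite{GHS}, adapted to the class $\mathcal{C}_w(\kappa)$ via the moment estimate \cref{lem:sound_weak_lemma_3.3}.

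Set $c := 1+1/\log x$ and $M(y) := \sum_{n\leq y} f(n)n^{-it_1}$. An effective truncated Perron formula with truncation height $T$ a small power of $\log x$ (with error controlled using \cref{lem:sound_weak_lemma_3.3}) yields
\[
\frac{M(x)}{x} - \frac{M(x/\omega)}{x/\omega} = \frac{1}{2\pi i}\int_{c-iT}^{c+iT} F(s+it_1)\,\frac{x^{s-1}(1-\omega^{1-s})}{s}\,ds + \mathcal{E},
\]
where $\mathcal{E}$ is negligible for the target bound. On $\re s = c$ one has $|x^{s-1}|\asymp 1$, the kernel bound $|1-\omega^{1-s}| \leq \min\{2,\,(|\im s|+1/\log x)\log\omega\}$ (from Taylor expansion of the exponential), and the Halász bound of \cref{lem:lemma4.1GHS} applied with $|t-t_1|=|\im s|$.

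Write $\beta := \kappa(1-\sqrt{2/\pi})$ and $V := \log\omega + (\log\log x)^2$. I would split the integration range at the key breakpoints $|\im s|\asymp 1/\log x$, $|\im s|\asymp V/\log x$, $|\im s|\asymp 1/\log\omega$, and $|\im s|\asymp 1/(\log\log x)^2$, which correspond to transitions in either the kernel bound or the Halász bound. On each subrange one uses the smaller of the Taylor or the trivial bound on the kernel, combined with the Halász bound on $|F|$. Evaluating the resulting integrals and combining contributions, with case analysis in the regimes $\beta \geq 1$ versus $\beta < 1$, yields the claimed bound. The final logarithmic factor $\log(\log x/(1+\log\omega))$ arises from integrating the decay factor $(1+|\im s|\log x)^{-\beta}$ over the range $|\im s|\in[V/\log x,\,O(1)]$ in the critical regime $\beta=1$.

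The main obstacle is the delicate bookkeeping required to produce the exponent $\min\{1,\beta\}$ on $V/\log x$. In the regime $\beta\geq 1$, full exploitation of the Taylor gain on the kernel absorbs the $1/s$ singularity and yields the first power of $V/\log x$; in the regime $\beta<1$, the Halász bound already flattens at $|\im s|\sim(\log\log x)^{-2}$, intrinsically limiting the overall gain to a $\beta$-power. An additional subtlety is that a naive pointwise application of \cref{lem:lemma4.1GHS} in the intermediate range $|\im s|\in[1/\log x,\,1/(\log\log x)^{-2}]$ can overshoot the target; handling this requires exploiting the $L^2$-mean-square properties of $F$ near $\re s=1$ (via \cref{lem:sigma_near_1_bounds}), or equivalently, smoothing the Perron kernel to obtain sharper off-axis decay. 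The remaining contributions from the truncation tails $|\im s|\in[T,\,(\log x)^\kappa]$ are absorbed using \cref{lem:sound_weak_lemma_3.2}.
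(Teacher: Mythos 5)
Your strategy — truncated Perron formula plus the Halász-type pointwise bound of \cref{lem:lemma4.1GHS} — is the natural first attempt, and you have correctly identified the fatal obstruction: in the regime $\beta := \kappa(1-\sqrt{2/\pi}) < 1$ (which covers $\kappa = 1$ and $\kappa = 2$), a contour integral estimated with pointwise bounds overshoots the target. To make this concrete: on the range $1/\log\omega \leq |t| \leq (\log\log x)^{-2}$ the kernel is $\asymp 1$ and \cref{lem:lemma4.1GHS} gives $|F(c+it)| \ll (\log x)^{\kappa-\beta}|t|^{-\beta}$; integrating in $t$ yields $\asymp (\log x)^{\kappa-\beta}(\log\log x)^{2(\beta-1)}$, which exceeds the target $\asymp (\log\omega)^{\beta}(\log x)^{\kappa-1-\beta}\log(\log x/\log\omega)$ by at least a factor of roughly $\log x/(\log\omega)^\beta(\log\log x)^{2-2\beta}\log(\cdot)$. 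This is a genuine loss, not a bookkeeping issue.

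The problem is that the repair you gesture at does not close the gap. \cref{lem:sigma_near_1_bounds} is a bound on a fixed-$\sigma$ Dirichlet-series sum (equivalently on $\log|F(\sigma)|$) — it is not an $L^2$ estimate in $t$ along a vertical line, and merely smoothing the Perron kernel cannot recover a full power of $\log x/\log\omega$. What is actually required is the Halász--Montgomery mean-square machinery: one decomposes $F$, integrates a mean square of $F'/F \cdot F$ (or similar) in both $\sigma$ and $t$, and only then feeds in the pointwise bound from \cref{lem:lemma4.1GHS} as the $L^\infty$ factor in a Cauchy--Schwarz step. This is the content of the GHS proof of their Theorem 1.5, and it is precisely why the paper's own argument is a citation: it reduces to a normalized statement \eqref{eqn:lipschitz_prelim} for arbitrary $g\in\mathcal{C}_w(\kappa)$ with maximizing parameter $t_0$, invokes the GHS proof verbatim with \cref{lem:lemma4.1GHS} substituted for their Lemma 4.1, and then specializes $g(n) = f(n)n^{-it_1}$ so that $t_0 = 0$. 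As written, your sketch leaves the crucial $L^2$/Plancherel step unfilled, and the tool you cite is not the one that fills it.
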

\begin{proof}
First, let $g\in\mathcal{C}_w(\kappa)$, and let $t_0\in[-(\log x)^{\kappa},(\log x)^{\kappa}]$ be chosen so that the absolute value of the Dirichlet series for $g$ at $s=1+1/\log x + it$ is maximized when $t=t_0$.  We will demonstrate that if $1\leq \omega\leq x^{\frac{1}{3}}$, then
\begin{equation}
\label{eqn:lipschitz_prelim}
\begin{aligned}
&\Big|\frac{1}{x^{1+it_0}}\sum_{n\leq x}g(n)-\frac{1}{(x/\omega)^{1+it_0}}\sum_{n\leq x/\omega}g(n)\Big|\\
&\ll \Big(\frac{\log \omega+(\log \log x)^2}{\log x}\Big)^{\min \{1,\kappa(1-\sqrt{\frac{2}{\pi}})\}}(\log x)^{\kappa-1} \log\Big(\frac{\log x}{1+\log\omega}\Big).
\end{aligned}
\end{equation}
To achieve this, we proceed just as in the proof of \cite[Theorem 1.5]{GHS}, except that we appeal to \cref{lem:lemma4.1GHS} in place of \cite[Lemma 4.1]{GHS}.

Now, given $f\in\mathcal{C}_w(\kappa)$, let $t_1\in[-(\log x)^{\kappa},(\log x)^{\kappa}]$ be chosen so that $|F(1+1/\log x+it)|$ is maximized when $t=t_1$.  We apply \eqref{eqn:lipschitz_prelim} with $g(n) = f(n)/n^{i t_1}$.  The Dirichlet series for $g$ is then $F_{t_1}(s):=F(s+it_1)$.  By construction, the value $t_0$ in \eqref{eqn:lipschitz_prelim} is zero, and the desired result follows.
\end{proof}

\subsection{Extending Granville--Harper--Soundararajan}

For functions $f\in\mathcal{C}(\kappa)$, the following (slightly weakened) form of Hal{\'a}sz's theorem is proved in \cite[Corollary 1.2]{GHS}:  If $0<\lambda\leq\kappa$ is such that $\sum_{n\geq 1}|f(n)|n^{-1-\sigma}\ll 1/\sigma^{\lambda}$ for all $\sigma>0$, then
\[
\sum_{n\leq x}f(n)\ll_{\kappa,\lambda}(1+M)e^{-M}x(\log x)^{\lambda-1}+\frac{x}{\log x}(\log\log x)^{\kappa},
\]
where $M$ is defined by
\begin{equation}
\label{eqn:M_def}
\max_{|t|\leq(\log x)^{\kappa}}\Big|\frac{F(1+1/\log x+it)}{1+1/\log x+it}\Big|=:e^{-M}(\log x)^{\lambda}.
\end{equation}
We now record the analogue of \cite[Corollary 2.8]{GHS} for $f\in\mathcal{C}_{w}(\kappa)$.

\begin{lemma}
\label{lem:relativehalasz1}
If $f\in\mathcal{C}_w(\kappa)$ and $M$ is defined by
\[
\max_{|t|\leq (\log x)^{\kappa}}\Big|\frac{F(1+1/\log x +it)}{1+1/\log x+it}\Big|:=e^{-M}(\log x)^{\kappa},
\]
then
\[
\sum_{n\leq x}f(n)\ll_{\kappa,A_0} (1+M)e^{-M}x(\log x)^{\kappa-1}+\frac{x}{\log x}(\log\log x)^{2}.
\]
\end{lemma}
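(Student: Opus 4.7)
The plan is to follow the proof of \cite[Corollary 2.8]{GHS} (which establishes the same target bound under the stronger pointwise hypothesis $f\in\mathcal{C}(\kappa)$) and to replace every appeal to the pointwise bound $|a_f(n)|\leq\kappa$ by a Cauchy--Schwarz reduction that uses only the averaged hypothesis \eqref{eqn:weak_ramanujan_2}. All of the $\mathcal{C}_w(\kappa)$ analogues of the ingredients used there — namely \cref{lem:sound_weak_lemma_3.2,lem:sound_weak_lemma_3.3,lem:sigma_near_1_bounds,lem:lemma4.1GHS,lem:lipshitz} — have already been recorded in this section, so the argument is essentially verifying that no step in \cite{GHS} silently demands more than \eqref{eqn:weak_ramanujan_2}.

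Concretely, let $t_1$ denote the maximizer from \cref{lem:lemma4.1GHS}, set $\tilde f(n):=f(n)n^{-it_1}$, $\widetilde F(s):=F(s+it_1)$, and $\widetilde S(y):=\sum_{n\leq y}\tilde f(n)$. The Dirichlet series $\widetilde F(s)$ attains its maximum on $\re(s)=c:=1+1/\log x$ at $t=0$, with value $e^{-M}(\log x)^{\kappa}(c+O(1/\log x))$. The first step is the Parseval identity for Dirichlet series,
\begin{equation*}
\int_{0}^{\infty}|\widetilde S(y)|^{2}y^{-2c-1}\,dy = \frac{1}{2\pi}\int_{-\infty}^{\infty}\Big|\frac{\widetilde F(c+it)}{c+it}\Big|^{2}\,dt,
\end{equation*}
which converts the problem into an $L^{2}$ bound on the right-hand side. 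I would split the $t$-integral into $|t|\leq 1/\log x$, $1/\log x\leq|t|\leq(\log x)^{\kappa}$, and $|t|>(\log x)^{\kappa}$: the first range contributes $O(e^{-2M}(\log x)^{2\kappa-1})$ from the $L^{\infty}$ bound defining $M$; the second range is treated dyadically using the decay from \cref{lem:lemma4.1GHS}, whose sum produces a factor of $M+O(1)$ before exponential decay dominates; the outer range is absorbed into the $O(x(\log\log x)^{2}/\log x)$ error via Cauchy--Schwarz together with \cref{lem:sound_weak_lemma_3.2} and \eqref{eqn:weak_ramanujan_2}. Finally, I would transfer this mean-square estimate to a pointwise bound at $y=x$ by inserting the Lipschitz estimate \cref{lem:lipshitz} (which plays the role of a smoothness certificate averaging $\widetilde S$ across neighboring scales), arriving at $|\widetilde S(x)|\ll(1+M)e^{-M}x(\log x)^{\kappa-1}+x(\log\log x)^{2}/\log x$. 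Since $|S(x)|=|\widetilde S(x)|$, this is the claimed bound.

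The main obstacle is that in \cite{GHS} the pointwise bound $|a_f(n)|\leq\kappa$ lets one bound $\int|F'/F(c+it)|^{2}dt$ directly by $\sum_{n}\Lambda(n)^{2}/n^{2c}$, and it also underlies the first-moment estimate $\sum_{Y<n\leq x}|a_f(n)|\Lambda(n)/n\ll\log(\log x/\log Y)$ used to dispose of the tail in the Lipschitz step. Under \eqref{eqn:weak_ramanujan_2} each such bound has to be replaced by its Cauchy--Schwarz surrogate (using \cref{lem:sigma_near_1_bounds} for the primes and \cref{lem:sound_weak_lemma_3.3} for the partial-sum squares), and one must check that the resulting additional $(\log\log x)^{O(1)}$ factors do not exceed the $(\log\log x)^{2}$ margin recorded in the statement. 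The exponent $2$ itself is inherited from the corresponding exponent in \cref{lem:lemma4.1GHS}, so it cannot be improved without strengthening \eqref{eqn:weak_ramanujan_2}.
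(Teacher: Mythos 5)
Your core plan --- run the proof of GHS Corollary~2.8 and replace every appeal to the pointwise bound $|a_f(n)|\leq\kappa$ by a Cauchy--Schwarz reduction to the averaged hypothesis \eqref{eqn:weak_ramanujan_2} --- is exactly what the paper does; its proof of \cref{lem:relativehalasz1} is precisely this one-paragraph observation, pointing to the computation in \cref{lem:lemma4.1GHS} as the template for the Cauchy--Schwarz surrogate. One cautionary note about your reconstruction of the GHS internals: the Hal\'asz machinery there does not obtain the pointwise bound at $y=x$ by inserting a Lipschitz certificate into the Parseval mean square --- in GHS the Lipschitz estimate (their Theorem~1.5, here \cref{lem:lipshitz}) is a sibling consequence of the $L^\infty$ decay bound rather than an ingredient of the Hal\'asz proof, and the localization at scale $x$ is built into the standard identity $\frac{|S(x)|\log x}{x}\ll\int_1^x\frac{|S(t)|}{t}\frac{dt}{t}+O(1)$ followed by Cauchy--Schwarz and Parseval. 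This mislabeling does not undermine the plan, since the substitutions you propose are the ones actually required, but it is worth keeping in mind that \cref{lem:lipshitz} and \cref{lem:relativehalasz1} are parallel outputs of \cref{lem:lemma4.1GHS}, not sequential ones.
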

\begin{proof}[Sketch of proof]
The proofs in \cite{GHS} carry over without noteworthy change to the situation where $f\in\mathcal{C}_w(\kappa)$; this is already noted in \cite{GHS}.  Instead of applying a pointwise bound for $\lambda_f(n)$ (as one may do when $f\in\mathcal{C}(\kappa)$), one must use the Cauchy--Schwarz inequality to relate sums of $\lambda_f(n)$ to the corresponding $L^2$ estimates.  One then applies \eqref{eqn:weak_ramanujan_2}.  A prototype for such a calculation can be found in our proof of \cref{lem:lemma4.1GHS}.
\end{proof}

We now justify the assertion in \cref{sec:intro} that \eqref{eqn:trivial_bound_pi} constitutes the ``trivial bound.''

\begin{lemma}
\label{lem:triv_bound_holds}
If $f\in\mathcal{C}_w(\kappa)$ and $x\geq 2$, then $\sum_{n\leq x}f(n)\ll_{\kappa,A_0,m}x(\log x)^{\kappa-1}$.
\end{lemma}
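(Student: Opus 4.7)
The plan is to derive this bound directly from the Hal{\'a}sz-type estimate of \cref{lem:relativehalasz1}, which gives
\[
\sum_{n\leq x}f(n)\ll_{\kappa,A_0}(1+M)e^{-M}x(\log x)^{\kappa-1}+\frac{x}{\log x}(\log\log x)^{2},
\]
where $M$ is defined by
\[
\max_{|t|\leq(\log x)^{\kappa}}\Big|\frac{F(1+1/\log x+it)}{1+1/\log x+it}\Big|=e^{-M}(\log x)^{\kappa}.
\]
The desired bound will follow once we show that $(1+M)e^{-M}=O(1)$ and that the second error term is absorbed into the first (i.e. into $x(\log x)^{\kappa-1}$).

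First I would control $M$ from below using \cref{lem:sound_weak_lemma_3.2}.  Applied with $\sigma=1+1/\log x\in(1,2]$ (valid once $x\geq e$), that lemma yields $|F(1+1/\log x+it)|\ll_{\kappa,A_0}(\log x)^{\kappa}$ uniformly in $t\in\R$.  Dividing by $|1+1/\log x+it|\geq 1$ and taking the maximum, we obtain $e^{-M}(\log x)^{\kappa}\ll(\log x)^{\kappa}$, hence $M\geq -C$ for some constant $C=C(\kappa,A_0)$.  Since the function $u\mapsto(1+u)e^{-u}$ is bounded on $[-C,\infty)$, we conclude that $(1+M)e^{-M}\ll_{\kappa,A_0}1$, and therefore
\[
(1+M)e^{-M}\,x(\log x)^{\kappa-1}\ll_{\kappa,A_0}x(\log x)^{\kappa-1}.
\]

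Next I would absorb the secondary term.  Because $\kappa\geq 1$ in the definition of $\mathcal{C}_w(\kappa)$ (see \eqref{eqn:weak_ramanujan_2}), we have $(\log x)^{\kappa}\geq \log x$, and once $x$ is larger than an absolute constant we have $\log x\geq(\log\log x)^{2}$; hence
\[
\frac{x}{\log x}(\log\log x)^{2}\leq x(\log x)^{\kappa-2}(\log\log x)^{2}\ll x(\log x)^{\kappa-1}
\]
for $x$ beyond some threshold $x_0=x_0(\kappa,A_0)$.  Combining this with the previous step gives the desired estimate in the range $x\geq x_0$.

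Finally I would deal with the range $2\leq x\leq x_0$.  Here there are only $O_{\kappa,A_0}(1)$ terms in the sum, and the hypothesis \eqref{eqn:weak_ramanujan_2} together with the Cauchy--Schwarz argument used in the proof of \cref{lem:lemma4.1GHS} gives a bound $\sum_{n\leq x}|f(n)|\ll_{\kappa,A_0}1$; since $x(\log x)^{\kappa-1}\geq 2(\log 2)^{\kappa-1}$ is bounded below by a positive constant, the ratio is bounded and the claim follows by adjusting the implied constant.  The only point that requires attention is step one, i.e.\ checking that the trivial lower bound on $M$ from \cref{lem:sound_weak_lemma_3.2} is indeed enough to control $(1+M)e^{-M}$ uniformly; I expect no further obstacles since the remaining manipulations are routine size comparisons.
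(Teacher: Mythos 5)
Your proposal is correct and follows essentially the same route as the paper: apply \cref{lem:relativehalasz1}, then use \cref{lem:sound_weak_lemma_3.2} at $\sigma=1+1/\log x$ to show $e^{-M}\ll 1$, hence $(1+M)e^{-M}\ll 1$. You are somewhat more careful than the paper's terse proof in explicitly absorbing the secondary term $\frac{x}{\log x}(\log\log x)^2$ (using $\kappa\geq 1$) and treating the bounded range $2\leq x\leq x_0$, but the core argument is identical.
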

\begin{proof}
Applying \cref{lem:sound_weak_lemma_3.2} with $\sigma=1+1/\log x$, we find that $|F(1+1/\log x+it)|\ll_{\kappa,A_0} (1+\log x)^{\kappa}$.  Therefore, the quantity $M$ in \cref{lem:relativehalasz1} is at least $-o_{\kappa,A_0}(1)$ as $x\to\infty$.  It follows that $(1+M)e^{-M}\ll_{\kappa,A_0} 1$, and the desired result now follows from \cref{lem:relativehalasz1}.
\end{proof}

Let $\phi\in\R$.  Our next lemma relates the mean values of $f(n)$ to those of $f(n)/n^{i\phi}$.
\begin{lemma}
\label{lem:factoring}
If $f\in\mathcal{C}_w(\kappa)$ and $\phi\in\mathbb{R}$, then
\begin{align*}
\Big|\sum_{n\leq x}f(n)- \frac{x^{i\phi}}{1+i\phi}\sum_{n\leq x}\frac{f(n)}{n^{i\phi}}\Big|\ll  x\frac{(\log \log x)^{\kappa}\log \log (e+|\phi|)}{\log x} \exp\Big( \sum_{n\leq x}\frac{|a_f(n)-n^{i\phi}|\Lambda(n)}{n\log n}\Big).
\end{align*}
\end{lemma}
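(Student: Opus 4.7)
\emph{Proof plan.} The natural approach is to factor $f$ via Dirichlet convolution as $f = g \ast 1_\phi$, where $1_\phi(n) := n^{i\phi}$ and $g := f \ast \mu_\phi$ with $\mu_\phi(n) := \mu(n) n^{i\phi}$. The Dirichlet series of $g$ is $G(s) = F(s)/\zeta(s - i\phi)$, and $a_g(n) = a_f(n) - n^{i\phi}$ on prime powers. Substituting this factorization into the difference yields
\[
\sum_{n \leq x} f(n) - \frac{x^{i\phi}}{1+i\phi} \sum_{n \leq x} \frac{f(n)}{n^{i\phi}} = \sum_{d \leq x} g(d) R(x/d),
\qquad R(y) := \sum_{m \leq y} m^{i\phi} - \frac{y^{i\phi} \lfloor y \rfloor}{1+i\phi},
\]
so the problem reduces to bounding $\sum_{d \leq x} g(d) R(x/d)$ in terms of the stated quantities.

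I would then bound the right-hand side by combining two tools, targeting each factor in the conclusion separately. The oscillatory error $R(y)$ is controlled by Euler--Maclaurin and the Fourier expansion $\{t\} - \frac{1}{2} = -\sum_{k\geq 1} \sin(2\pi k t)/(\pi k)$: integration by parts against $t^{i\phi-1}$, treating carefully the stationary phase at frequency $k \asymp |\phi|/(2\pi)$, should yield the $\log\log(e+|\phi|)$ factor in the spirit of the Mertens-type estimate $\sum_{p\leq|\phi|} 1/p \ll \log\log(e+|\phi|)$. For the weighted sum, the multiplicativity of $g$ together with Rankin's trick at $\sigma=1+1/\log x$ gives
\[
\sum_{n\leq x} \frac{|g(n)|}{n^\sigma} \ll \exp\Big(\sum_{n \leq x} \frac{|a_f(n)-n^{i\phi}|\Lambda(n)}{n\log n}\Big),
\]
reproducing the exponential factor in the conclusion (using \cref{lem:sigma_near_1_bounds} to trim the tail $n>x$).

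To extract the $1/\log x$ saving together with the $(\log\log x)^\kappa$ prefactor, I would dyadically decompose the sum $\sum_{d\leq x} g(d) R(x/d)$ and apply the Lipschitz estimate \cref{lem:lipshitz} in each range, replacing crude pointwise bounds on partial sums of $g$ by smoother averages; the factor $(\log\log x)^\kappa$ then enters through \cref{lem:sigma_near_1_bounds,lem:lemma4.1GHS} applied to $F$ near $\re(s)=1$. Equivalently, one may pass to a Perron/Mellin representation
\[
\sum_{d\leq x} g(d) R(x/d) = \frac{1}{2\pi i}\int G(s)\,\zeta(s-i\phi)\,x^s \left[\frac{1}{s} - \frac{1}{(1+i\phi)(s-i\phi)}\right] ds,
\]
where the apparent pole of $\zeta(s-i\phi)$ at $s=1+i\phi$ cancels against the zero of the bracketed factor at that same point, leaving an entire integrand that is bounded in $L^\infty$ on $\re(s)=1+1/\log x$ by the exponential appearing in the conclusion.

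The main obstacle I anticipate is aligning all three factors, $(\log\log x)^\kappa$, $\log\log(e+|\phi|)$, and $1/\log x$, simultaneously: the oscillation estimate for $R$, the Rankin-type sum for $|g|$, and the smoothing provided by \cref{lem:lipshitz} must each be applied at their sharp level, and neither a naive pointwise bound on $R$ (which would cost a power of $|\phi|$) nor a naive Perron truncation (which would cost a power of $\log x$) is admissible. I would handle this by splitting the $t$-integral along the Mellin contour into the regions $|t|\leq (\log x)^\kappa$ and $|t|>(\log x)^\kappa$, using \cref{lem:lemma4.1GHS} for the former and a trivial estimate supplemented by \cref{lem:sound_weak_lemma_3.2} for the latter.
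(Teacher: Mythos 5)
Your overall plan — factor $f$ through a Dirichlet convolution, bound $R(y)$ separately, and use a Rankin-type estimate to produce the exponential factor — does align with the paper, but two of the key mechanisms you identify are not the ones that actually make the argument work, and without them the proof stalls.

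First, the $\log\log(e+|\phi|)$ factor does not come from a stationary-phase analysis of $R(y)$ and has nothing to do with $\sum_{p\le|\phi|}1/p$. The paper's treatment of the inner sum is entirely elementary: partial summation gives
\[
\sum_{m\le y} m^{i\phi} = \frac{y^{1+i\phi}}{1+i\phi} + O\big(\min\{1+|\phi|^2,\,y\}\big),
\]
and the $\phi$-dependence surfaces when this error is re-summed over $d$ in the convolution $\sum_{d\le x} g(d) \cdot (\text{inner sum at } x/d)$. Splitting the $d$-range at $d = x/(1+\phi^2)$, the tail $d\in(x/(1+\phi^2), x]$ — where $x/d$ is small — contributes a factor of roughly $\log\frac{\log x}{\log(x/(1+\phi^2))} \asymp \frac{\log(e+|\phi|)}{\log x}$ times the Dirichlet-series weight $\sum_{d\le x}|g(d)|/d$. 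The stationary-phase machinery you describe would cost work and would not produce this term.

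Second, and more seriously, the $\frac{(\log\log x)^\kappa}{\log x}$ saving is not extracted via \cref{lem:lipshitz} or via \cref{lem:sigma_near_1_bounds,lem:lemma4.1GHS}; the Lipschitz estimate is not used in this proof at all. The engine is the Hal\'asz-type upper bound \cref{lem:relativehalasz1}, applied to the nonnegative multiplicative function $|g|$. One first checks that $g$ (with $a_g(n)=a_f(n)-1$, or $a_f(n)-n^{i\phi}$ in your normalization) lies in $\mathcal{C}_w$ with a parameter controlled by $\kappa$, via $|a_g(n)|^2\le 2(|a_f(n)|^2+1)$, and then \cref{lem:relativehalasz1} delivers
\[
\sum_{n\le x}|g(n)| \ll \frac{x}{\log x}\,(\log\log x)\sum_{d\le x}\frac{|g(d)|}{d} + \frac{x}{\log x}(\log\log x)^{O_\kappa(1)},
\]
which is exactly where the $1/\log x$ decay and the $(\log\log x)^\kappa$-type prefactor come from when substituted back into the unfolded convolution. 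Without invoking this Hal\'asz bound you have no mechanism for the logarithmic saving, and the dyadic-plus-Lipschitz scheme you sketch would at best reproduce a bound without the $1/\log x$ factor. Your observation that the problem reduces to the convolution $\sum_{d\le x}g(d)R(x/d)$ and that Rankin's trick at $\sigma=1+1/\log x$ produces the exponential factor $\exp(\sum |a_f(n)-n^{i\phi}|\Lambda(n)/(n\log n))$ is correct and is precisely what the paper does; the paper also first proves the estimate with $|a_f(n)-1|$ and then substitutes $f\mapsto f\cdot n^{-i\phi}$ at the very end, which is equivalent to your direct use of $\mu_\phi$. The Perron/Mellin alternative you float is plausible but is not how the paper proceeds and you leave it unfleshed.
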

\begin{proof}
If $h$ is a nonnegative multiplicative function and $h\in \mathcal{C}_w(\alpha),$ then by \cref{lem:relativehalasz1},
\begin{equation}
\label{halber}
\begin{aligned}
\sum_{n\leq x}h(n)&\ll \frac{x}{\log x}(1+M)\max_{|t|\leq (\log x )^{\alpha}}\Big|\frac{H(1+1/\log x +it)}{1+1/\log x+it}\Big|+\frac{x}{\log x}(\log \log x)^{\alpha}\\
& \ll \frac{x}{\log x}(\log \log x)\sum_{n\geq 1}\frac{h(n)}{n^{1+\frac{1}{\log x}}}+\frac{x}{\log x}(\log \log x)^{\alpha}\\
&\ll\frac{x}{\log x}(\log \log x)\sum_{n\leq x}\frac{h(n)}{n}+\frac{x}{\log x}(\log \log x)^{\alpha}.
\end{aligned}
\end{equation}
Define $g(n)=\sum_{d\mid n}f(d)$.  Note that $a_f(n)=a_g(n)+1$; since $|a_g(n)|^2\leq 2(|a_f(n)|^2+1)$, we have that
\[
\sum_{x<n\leq ex}\frac{|a_g(n)|^2\Lambda(n)}{n}\leq 2\sum_{x<n\leq ex}\frac{|a_f(n)|^2\Lambda(n)}{n}+2\sum_{x<n\leq ex}\frac{\Lambda(n)}{n}\leq 2(\kappa+1)+O\Big(\frac{A_0}{\log(ex)}\Big).
\]
It follows that $g\in\mathcal{C}_w(2(\kappa+1)).$  We have $\sum_{n\leq x}n^{i\phi}= \frac{x^{1+i\phi}}{1+i\phi}+O(\min\{1+|\phi|^2,x\})$ by partial summation.  Unfolding the convolution and applying \eqref{halber} for $h=|g|$, we deduce
\begin{equation}
\label{convolve}	
\begin{aligned}
&\sum_{n\leq x}f(n)n^{i\phi}\\
&=\sum_{d\leq x}g(d)d^{i\phi}\sum_{n\leq x/d}n^{i\phi}\\
&=\frac{x^{i\phi}}{1+i\phi}\sum_{d\leq x}\frac{g(d)}{d}+O\Big((1+\phi^2)\sum_{d\leq x/(1+\phi^2)}|g(d)|+x\sum_{x/(1+\phi^2)< n\leq x}\frac{|g(d)|}{d}
\Big)\\&=\frac{x^{1+i\phi}}{1+i\phi}\sum_{d\leq x}\frac{g(d)}{d}+O\Big(\frac{x \log (e+|\phi|)\log \log x}{\log x}\sum_{d\leq x}\frac{|g(d)|}{d}+\frac{x}{\log x}(\log \log x)^{\kappa+1}\Big).
\end{aligned}
\end{equation}
We note that 
\begin{align*}
\sum_{n\leq x}\frac{|g(d)|}{d}\ll \sum_{d\geq 2}\frac{|g(d)|}{d^{1+\frac{1}{\log (ex)}}}  \ll  \exp\Big( \sum_{n\geq 2}\frac{|a_g(n)|\Lambda(n)}{n^{1+\frac{1}{\log (ex)}}\log n}\Big)\ll \exp\Big( \sum_{n\leq x}\frac{|a_f(n)-1|\Lambda(n)}{n\log n}\Big).
\end{align*}
By taking $\phi =0$ in \eqref{convolve}, we find that
\[
\sum_{n\leq x}f(n)=x\sum_{d\leq x}\frac{g(d)}{d}+O\Big(\sum_{d\leq x}|g(d)|+\frac{x}{\log x}(\log \log x)^{\kappa+1}\Big).
\]
Using \eqref{halber} once again to bound the error term and substituting the result into \eqref{convolve}, we conclude that
\[
\Big|\sum_{n\leq x}f(n)n^{i\phi}- \frac{x^{i\phi}}{1+i\phi}\sum_{n\leq x}f(n)\Big|\ll x\frac{(\log \log x)^{\kappa}\log \log (e+|\phi|)}{\log x} \exp\Big( \sum_{n\leq x}\frac{|a_f(n)-1|\Lambda(n)}{n\log n}\Big).
\]
The lemma follows from the above display by replacing $f(n)$ with $f(n)/n^{i\phi}$.
\end{proof}

The next lemma provides us with a convenient alternative formulation of \cref{lem:factoring}.

\begin{lemma}
\label{lem:twist}
Let $f\in\mathcal{C}_w(\kappa)$ and $y_0\geq 1$. Define $N\geq 1$ by 
\begin{align*}
\Big|\sum_{n\leq e^{y_0}}f(n)\Big|=\frac{e^{y_0}y_0^{\kappa-1}}{N},
\end{align*}
and assume that $N\le y_0^{1/10}$.
There exists a real $\phi=\phi(y_0)$ such that $|\phi|\ll N$ and
\begin{equation}
\label{switch}
\Big|\sum_{n\leq e^{y_0}}\frac{f(n)}{n^{i\phi}}-\frac{1+i\phi}{e^{i\phi y_0}}\sum_{n\leq e^{y_0}}f(n)\Big|\ll e^{y_0}y_0^{\kappa-1-\frac{1}{5}}.
\end{equation}
If $a_f(n)\in\R$ for all $n\geq 1$, then there exists a constant $\Cl[abcon]{phi_real}=\Cr{phi_real}(\kappa,A_0,m)>0$ such that
\begin{align}
\label{eq:phifreal}
|\phi|\leq \Cr{phi_real}\frac{N^{6/\kappa}}{y_0}.
\end{align}
\end{lemma}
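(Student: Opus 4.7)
The plan is to apply \cref{lem:factoring} with $\phi = t_1$, where $t_1 \in [-y_0^{\kappa}, y_0^{\kappa}]$ is the real number from \cref{lem:lemma4.1GHS} that maximizes $|F(1+1/y_0+it)|$ (taking $x = e^{y_0}$, so $\log x = y_0$). The first step is to establish that $|t_1| \ll N$. By \cref{lem:relativehalasz1}, the hypothesis $|S(e^{y_0},f)| = e^{y_0}y_0^{\kappa-1}/N$ together with $N \le y_0^{1/10}$ (which ensures the secondary error $\tfrac{x}{\log x}(\log\log x)^2$ in the Hal\'asz bound is dominated by the main term) forces $(1+M)e^{-M} \gg 1/N$, where $e^{-M}y_0^{\kappa} = |F(1+1/y_0+it_1)/(1+1/y_0+it_1)|$. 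Consequently $M \le \log N + O(\log\log N)$, so $|F(1+1/y_0+it_1)|/|1+1/y_0+it_1| \gg y_0^{\kappa}/N$. Combining with the trivial upper bound $|F(1+1/y_0+it_1)| \ll y_0^{\kappa}$ from \cref{lem:sound_weak_lemma_3.2} gives $|1+1/y_0+it_1| \ll N$, whence $|t_1| \ll N$.

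Setting $\phi = t_1$ and multiplying the conclusion of \cref{lem:factoring} through by $(1+i\phi)/x^{i\phi}$, and using $|\phi| \ll N$, reduces \eqref{switch} to showing
\[
\exp\Big(\sum_{n \le e^{y_0}}\frac{|a_f(n)-n^{i\phi}|\Lambda(n)}{n\log n}\Big) \ll \frac{y_0^{\kappa-1/5}}{N(\log y_0)^{\kappa}\log\log(e+N)}.
\]
I would estimate this by Cauchy--Schwarz against the weight $\Lambda(n)/(n\log n)$, using $\sum_{n \le e^{y_0}}\Lambda(n)/(n\log n) = \log y_0 + O(1)$ by Mertens. Expanding $|a_f(n)-n^{i\phi}|^2 = |a_f(n)|^2 + 1 - 2\re(a_f(n)n^{-i\phi})$, the first and last pieces are bounded by \cref{lem:sigma_near_1_bounds}, while the cross term satisfies
\[
\re\sum_{n \le e^{y_0}}\frac{a_f(n)n^{-i\phi}\Lambda(n)}{n\log n} = \log|F(1+1/y_0+i\phi)| + O(1) \ge \kappa \log y_0 - \log N - O(1),
\]
after the standard comparison between the truncated sum and the Dirichlet series regularized by $n^{-1/y_0}$. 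This produces
\[
\sum_{n \le e^{y_0}}\frac{|a_f(n)-n^{i\phi}|\Lambda(n)}{n\log n} \le \Big(|\kappa-1| + \sqrt{2\log N/\log y_0}\,\Big)\log y_0 + O(1).
\]
Since $\log N \le (\log y_0)/10$, the parenthesized quantity is at most $|\kappa-1| + 1/\sqrt{5}$, which for $\kappa \ge 1$ leaves ample room to absorb the factor $N$ and the polylogarithmic factors on the right, producing the target $y_0^{\kappa-1/5}$.

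For the second assertion, $a_f(n) \in \R$ forces $f(n) \in \R$ and hence $|F(1+1/y_0+it)| = |F(1+1/y_0-it)|$ via $F(\overline{s}) = \overline{F(s)}$. Thus $-t_1$ is also a maximum point. Evaluating the Lipschitz estimate \cref{lem:lemma4.1GHS} at $t = -t_1$ with reference maximum at $t_1$ yields
\[
\frac{y_0^{\kappa}}{N} \ll |F(1+1/y_0-it_1)| \ll y_0^{\kappa\sqrt{2/\pi}}\Big(\frac{y_0}{1+2|t_1|y_0} + (\log y_0)^2\Big)^{\kappa(1-\sqrt{2/\pi})}.
\]
For $y_0$ sufficiently large the $(\log y_0)^2$ piece is dominated, and solving for $|t_1|$ gives $|t_1| \ll N^{1/(\kappa(1-\sqrt{2/\pi}))}/y_0$. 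Since $\sqrt{2/\pi} < 5/6$ (equivalently $25\pi > 72$), one has $1/(1-\sqrt{2/\pi}) < 6$, so $|t_1| \le \Cr{phi_real}\,N^{6/\kappa}/y_0$, as claimed.

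The main obstacle is the precise bookkeeping in the Cauchy--Schwarz step: the $|\kappa-1|$ main term and the correction $\sqrt{2\log N/\log y_0} \le 1/\sqrt{5}$ together must leave at least $1/5$ of saving in the exponent of $y_0$ after absorbing the factor $N$ and the $(\log y_0)^{\kappa}\log\log(e+N)$ factor from \cref{lem:factoring}. The hypothesis $N \le y_0^{1/10}$ is calibrated precisely so that this inequality closes (comfortably for $\kappa \ge 1$, which is the case in all intended applications).
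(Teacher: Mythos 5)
Your overall strategy closely tracks the paper's: pick a maximizing frequency $\phi$, invoke \cref{lem:factoring}, and control the exponential via Cauchy--Schwarz after expanding $|a_f(n)-n^{i\phi}|^2=|a_f(n)|^2+1-2\Re(a_f(n)n^{-i\phi})$ and lower-bounding the cross term by $\log|F(1+1/y_0+i\phi)|$. That decomposition and the numerical bookkeeping ($\sqrt{2\log N/\log y_0}\le 1/\sqrt 5$, absorption of $N(\log y_0)^\kappa\log\log(e+N)$ into $y_0^{1/\sqrt 5-1/10}$, etc.) all match the paper and close correctly. Your treatment of \eqref{eq:phifreal} is genuinely different, and in fact rather elegant: rather than using the cosine second moment \eqref{eqn:cosine_asymptotic} directly together with Cauchy--Schwarz (as the paper does), you exploit the conjugate symmetry $F(\bar s)=\overline{F(s)}$, observe that $\pm t_1$ are both maxima, and feed $t=-t_1$ back into the GHS Lipschitz bound \cref{lem:lemma4.1GHS}; the exponent $6/\kappa$ then drops out of $\sqrt{2/\pi}<5/6$. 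Both routes work; yours reuses existing machinery at the cost of the extra care described below.

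There is, however, a genuine gap in your handling of $\phi$. You declare $\phi=t_1$, where $t_1$ is the maximizer of $|F(1+1/y_0+it)|$ from \cref{lem:lemma4.1GHS}, and then assert $e^{-M}y_0^{\kappa}=|F(1+1/y_0+it_1)/(1+1/y_0+it_1)|$. This is not correct: the $M$ appearing in \cref{lem:relativehalasz1} is defined by $\max_{|t|\le y_0^\kappa}\bigl|F(1+1/y_0+it)/(1+1/y_0+it)\bigr|=e^{-M}y_0^\kappa$, so $e^{-M}y_0^\kappa$ is attained at the maximizer of the \emph{ratio}, not of $|F|$, and these need not coincide. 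At your $t_1$ one only knows $|F(1+1/y_0+it_1)|/|1+1/y_0+it_1|\le e^{-M}y_0^\kappa$ — the inequality points the wrong way — so the chain $|F(1+1/y_0+it_1)|\ll y_0^\kappa$ plus ``ratio $\gg y_0^\kappa/N$'' does not bound $|t_1|$. The paper avoids this by taking $\phi$ to be the maximizer of the ratio from the outset, for which the definition of $M$ directly gives $|1+1/y_0+i\phi|\le e^M\ll N(\log N)^2$, and for which the cross-term lower bound $\log|F(1+1/y_0+i\phi)|\ge -M+\kappa\log y_0\ge\kappa\log y_0-\log N-O(\log\log N)$ still holds since $|1+1/y_0+i\phi|\ge 1$. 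If you adopt the ratio-maximizer as $\phi$, your first part goes through verbatim; for the second part you can no longer plug $\phi$ straight into \cref{lem:lemma4.1GHS} (which is calibrated to the $|F|$-maximizer $t_1$), but you can still close the argument by applying \cref{lem:lemma4.1GHS} at both $t=\phi$ and $t=-\phi$ against the reference point $t_1$, obtaining $|\phi-t_1|\ll N^{6/\kappa}/y_0$ and $|\phi+t_1|\ll N^{6/\kappa}/y_0$, and then using $2|\phi|\le|\phi-t_1|+|\phi+t_1|$. With that repair, your proof is correct.
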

\begin{proof}
A stronger result follows from \cite[Lemma 3.3]{GS_JEMS} when $\kappa=1$, so we may assume that $\kappa\geq 2$. 
 Let $S$ denote the left side of~\eqref{switch}. Select $|\phi|\ll N$ so as to maximize the quantity
\begin{align}
\label{eq:gvh54+}
\max_{|t|\leq y_0^{\kappa}}\Big|\frac{F(1+1/y _0+it)}{1+1/y_0+it}\Big|=\Big|\frac{F(1+1/y _0+i\phi)}{1+1/y_0+i\phi}\Big|:=e^{-M}y^{\kappa}_0.
\end{align}
(The choice of $\phi$ will depend on $y_0$.)  Our hypothesis on $N$ implies that, $|\phi|\ll y_0^{1/10}$.  By \cref{lem:factoring} (with $x$ replaced by $e^{y_0}$), we have that
\begin{equation}
\label{eq:S11}
S\ll N\cdot e^{y_0}\frac{(\log y_0)^{\kappa}\log \log (e+|\phi|)}{y_0}\exp\Big( \sum_{n\leq e^{y_0}}|a_f(n)n^{-i\phi}-1|\frac{\Lambda(n)}{n\log n}\Big).
\end{equation}
To bound the exponential, we apply the Cauchy--Schwarz inequality to obtain
\begin{align*}
&\sum_{n\leq e^{y_0}}|a_f(n)n^{-i\phi}-1|\frac{\Lambda(n)}{n\log n}\\
&\leq \Big(\sum_{2\leq n\leq e^{y_0}}\frac{\Lambda(n)}{n\log n}\Big)^{\frac{1}{2}}\Big(\sum_{n\leq e^{y_0}}|1-a_f(n)n^{-i\phi}|^2\frac{\Lambda(n)}{n\log n}\Big)^{\frac{1}{2}}\\
&= \Big(\sum_{2\leq n\leq e^{y_0}}\frac{\Lambda(n)}{n\log n}\Big)^{\frac{1}{2}}\Big(\sum_{2\leq n\leq e^{y_0}}\frac{(|a_f(n)|^2+1)\Lambda(n)}{n\log n}-2 \sum_{2\leq n\leq e^{y_0} }\frac{\Re(a_{f}(n)n^{-i\phi})\Lambda(n)}{n\log n}\Big)^{1/2}.
\end{align*}
It follows from \eqref{eqn:weak_ramanujan_2}, dyadic decomposition, and partial summation that
\begin{equation}
\label{eqn:generalized_mertens}
\sum_{2\leq n\leq e^{y_0}}\frac{|a_f(n)|^2\Lambda(n)}{n\log n} \leq \kappa^2\log y_0 + O(1),\qquad \sum_{2\leq n\leq e^{y_0}}\frac{\Lambda(n)}{n\log n} \leq \log y_0 + O(1),
\end{equation}
so we arrive at
\begin{equation}
\label{eq:sbggjgjgjgjgjgjjggjgj}
\begin{aligned}
&\sum_{n\leq e^{y_0}}|a_f(n)n^{-i\phi}-1|\frac{\Lambda(n)}{n\log n} \\
&\le (\log{y_0}+O(1))^{1/2}\Big( (\kappa^2+1)\log{y_0}-2 \sum_{n\leq e^{y_0} }\frac{\Re(a_{f}(n)n^{-i\phi})\Lambda(n)}{n\log n}\Big) \Big)^{1/2}.
\end{aligned}
\end{equation}

We now provide a lower bound for the sums
\begin{equation*}
\sum_{n\leq e^{y_0} }\frac{\Re(a_{f}(n)n^{-i\phi})\Lambda(n)}{n\log n}.
\end{equation*}
By \cref{lem:relativehalasz1}, we have
\[
\frac{e^{y_0}y_0^{\kappa-1}}{N}\ll \Big|\sum_{n\leq e^{y_0}}f(n)\Big|\ll (1+M)e^{-M}e^{y_0}y_0^{\kappa-1},
\]
in which case
\begin{align}
\label{eq:MNlb}
M\leq \log N+2\log\log N.
\end{align}
 On the other hand, by \eqref{eq:gvh54+} and Dirichlet series manipulations, we have the bounds
\begin{equation}
\label{eq:Mabcdefgh}
\begin{aligned}
e^{-M}y_0^{\kappa}=\Big|\frac{F(1+1/y_0 +i\phi)}{1+1/y_0+i\phi}\Big|&\ll\frac{\exp(\log|F(1+1/y_0+i\phi)|)}{1+|\phi|} \\
&\ll \frac{1}{1+|\phi|}\exp\Big(\sum_{n\geq 2 }\frac{\Re(a_f(n) n^{-i\phi})\Lambda(n)}{n^{1+\frac{1}{y_0}}\log n}\Big)\\
&\asymp \frac{1}{1+|\phi|}\exp\Big(\sum_{n\leq e^{y_0} }\frac{\Re(a_f(n) n^{-i\phi})\Lambda(n)}{n\log n}\Big).
\end{aligned}
\end{equation}
We combine \eqref{eq:MNlb}, \eqref{eq:Mabcdefgh}, and the bounds $N\ll y_0^{1/10}\leq y_0^{\kappa}$ to conclude that
\begin{equation}
\label{eq:Mabcdefghi}
\exp\Big(\sum_{n\leq e^{y_0} }\frac{\Re(a_f(n) n^{-i\phi})\Lambda(n)}{n\log n}\Big)\gg  \frac{y_0^{\kappa-1/10}}{(\log y_0)^2}.
\end{equation}
The bound \eqref{switch} now follows from \eqref{eq:S11}, \eqref{eq:sbggjgjgjgjgjgjjggjgj}, \eqref{eq:Mabcdefghi}, and the bound $N\ll y_0^{1/10}$.

Suppose that $a_f(n)$ is real-valued.  The Cauchy--Schwarz inequality, \eqref{eqn:cosine_asymptotic}, \eqref{eqn:generalized_mertens}, \eqref{eq:MNlb}, and~\eqref{eq:Mabcdefgh} yield
\begin{align*}
&\kappa \log{y_0}-\log{N}-2\log\log N\\
&\leq \sum_{n\leq e^{y_0} }\frac{a_f(n)\cos(\phi\log n)\Lambda(n)}{n\log n}\\
&\leq 
\Big(\sum_{n\leq e^{y_0} }\frac{a_f(n)^2\Lambda(n)}{n\log n}\Big)^{\frac{1}{2}}\Big(\sum_{n\leq e^{y_0} }\frac{\cos^2(\phi\log n)\Lambda(n)}{n\log n}\Big)^{\frac{1}{2}}\\
&\leq (\kappa^2\log y_0+O(1))^{\frac{1}{2}}\Big(\frac{2}{\pi}\log y_0+\Big(1-\frac{2}{\pi}\Big)\log\max\{\Cr{Lem2.4_large}(\log y_0)^2,|\phi|^{-1}\}+O(1)\Big)^{\frac{1}{2}}.
\end{align*}
In particular, the bounds
\[
\max\{\Cr{Lem2.4_large}(\log y_0)^2,|\phi|^{-1}\}\gg y_0 (N (\log N)^2)^{-\frac{2\pi}{(\pi-2)\kappa}}\gg y_0 N^{-6/\kappa}
\]
hold.  Since $N\le y_0^{1/10}$, we would obtain a contradiction if $\Cr{Lem2.4_large}(\log y_0)^2$ were the maximum.  This yields \eqref{eq:phifreal}, as desired.
\end{proof}

\section{$L$-function estimates}

Let $L(s,\pi)\in\mathfrak{S}(m)$ have analytic conductor $\CC$.  It is immediate that if $\kappa$ is the constant in \eqref{eqn:weak_ramanujan}, then the multiplicative function $\lambda_{\pi}(n)$ lies in the class $\mathcal{C}_w(\kappa)$.  Thus, all of the results from \cref{sec:mean_values} apply with $f=\lambda_{\pi}$.  Throughout this section, unless otherwise specified, all implied constants will depend at most on $\kappa$, $A_0$, and $m$.

\subsection{Convexity bounds for $L(s,\pi)$}
We require bounds for $L(s,\pi)$ just to the right of the line $\re(s)=1$.  By \cite[(5.5)]{ST}, we have that
\begin{equation}
	\label{eqn:convexity_w}
	|L(\tfrac{1}{2},\pi)|\ll|L(\tfrac{3}{2},\pi)|^2 \CC^{\frac{1}{4}}.
\end{equation}
The bound $L(\frac{3}{2}+it,\pi)\ll 1$ follows from \cref{lem:sound_weak_lemma_3.2} applied to $f(n)=\lambda_{\pi}(n)$.  If we change $\frac{1}{2}$ to $\frac{1}{2}+it$ on the left  side  of \eqref{eqn:convexity_w}, the effect on the right  side  is that the numbers $\mu_{\pi}(j)$ shift by $it$.  Thus, $\CC$ is replaced by $\CC(it)$, and we conclude via the triangle inequality that
\begin{equation}
	\label{eqn:convexity_0}
	|L(\tfrac{1}{2}+it,\pi)|\ll \CC^{\frac{1}{4}}(1+|t|)^{\frac{m}{4}}.
\end{equation}
We use \cref{lem:sound_weak_lemma_3.2} again to achieve the bound $L(1+\frac{1}{\log \CC}+it,\pi)\ll (\log \CC)^{\kappa}$.  Thus, by the Phragm{\'e}n-Lindel{\"o}f principle, we find that
\begin{equation}
	\label{eqn:convexity_1}
	|L(\sigma+it,\pi)|\ll \CC^{\frac{1-\sigma}{2}}(\log \CC)^{\kappa(2\sigma-1)} (1+|t|)^{\frac{m(1-\sigma)}{2}},\qquad 1/2\leq\sigma\leq 1.
\end{equation}

\subsection{Partial sums and $L$-functions}

We now prove a standard estimate on the partial sums of $\lambda_{\pi}(n)$ when $L(s,\pi)\in\mathfrak{S}(m)$.  We will make no attempt to optimize the exponents.

\begin{lemma}
	\label{lem:partial_sums_L-function}
	If $L(s,\pi)\in\mathfrak{S}(m)$ (in particular, $L(s,\pi)$ has no pole), then
	\[
	\sum_{n\leq x}\lambda_{\pi}(n)\ll x^{1-\frac{1}{m+14}}((\log x)^{\frac{\kappa^2}{2}}+\CC^{\frac{1}{4}}),\qquad x\geq 3.
	\]
\end{lemma}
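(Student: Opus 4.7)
The plan is to apply a truncated Perron formula, shift the contour to the line $\Re(s) = \tfrac{1}{2}$, and use the convexity bounds \eqref{eqn:convexity_0}--\eqref{eqn:convexity_1} together with \cref{lem:sound_weak_lemma_3.3} to estimate the Perron error. A judicious choice $T = x^{2/(m+14)}$ will balance the resulting terms.

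Concretely, set $c = 1 + 1/\log x$. The truncated Perron formula yields
\[
\sum_{n \le x}\lambda_\pi(n) = \frac{1}{2\pi i}\int_{c-iT}^{c+iT} L(s,\pi)\,\frac{x^s}{s}\,ds + R,
\]
where $R$ is the Perron remainder. Unlike in the setting of pointwise Ramanujan bounds, I only have $L^1$-type control on $\lambda_\pi$; accordingly, I split the defining sum for $R$ dyadically in the ranges $|n-x| \asymp 2^k x/T$, $n \le x/2$, and $n \ge 2x$, bounding each dyadic block via the square-root cancellation $(xy)^{1/2}(\log x)^{\kappa^2/2}$ from \cref{lem:sound_weak_lemma_3.3} (and using partial summation together with \cref{lem:sound_weak_lemma_3.3} for the tails). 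The geometric series in $k$ then telescopes to give $R \ll x(\log x)^{\kappa^2/2}/T^{1/2}$ (up to a subdominant $x(\log x)^{\kappa^2/2+1}/T$ term from the tails).

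Since $L(s,\pi)$ is entire, I may shift the contour from $\Re(s) = c$ to $\Re(s) = \tfrac{1}{2}$ without encountering any poles. The vertical integral on $\Re(s) = \tfrac12$ is controlled by \eqref{eqn:convexity_0}, giving a contribution $\ll x^{1/2}\CC^{1/4} T^{m/4}$. The horizontal integrals at $\Im(s) = \pm T$ are handled using \eqref{eqn:convexity_1}: the $\sigma$-integral from $\tfrac12$ to $c$ is dominated either at $\sigma = c$ (producing $\ll x(\log\CC)^{\kappa}/T$) or at $\sigma = \tfrac{1}{2}$ (where the shape matches the vertical integral divided by $T$, hence is absorbed).

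With $T = x^{2/(m+14)}$, the three principal contributions evaluate to $x^{1-1/(m+14)}(\log x)^{\kappa^2/2}$, $x^{(m+7)/(m+14)}\CC^{1/4} \le x^{1-1/(m+14)}\CC^{1/4}$, and $x^{1-2/(m+14)}(\log\CC)^\kappa$; the last is absorbed trivially into the first when $\CC \le x$ and into the $\CC^{1/4}$ term otherwise. The main obstacle is the dyadic treatment of $R$ near $x$: the pointwise Ramanujan bound on $\lambda_\pi(n)$ is not available, so the $T^{-1/2}$ saving must be extracted by applying \cref{lem:sound_weak_lemma_3.3} window-by-window and carefully verifying that the $(\log x)^{\kappa^2/2}$ factor survives summation over the $O(\log T)$ dyadic blocks.
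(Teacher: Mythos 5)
Your proposal is correct, but it takes a noticeably different route from the paper. The paper does not use the sharply truncated Perron formula; instead, following \cite[Equation 4.1]{Sound_weak}, it uses the smoothed Perron integral
\[
\sum_{n\leq x}\lambda_{\pi}(n)=\frac{1}{2\pi i}\int_{c-i\infty}^{c+i\infty}L(s,\pi)\frac{x^s}{s}\Big(\frac{e^{\lambda s}-1}{\lambda s}\Big)^K ds+O\Big(\sum_{x<n\leq e^{K\lambda}x}|\lambda_{\pi}(n)|\Big),
\]
with $K=\lfloor m/4\rfloor+3$ and $\lambda=x^{-2/(m+14)}$. The smooth kernel accomplishes at once what you have to work for: the error term is a single short window sum $\sum_{x<n\leq e^{K\lambda}x}|\lambda_{\pi}(n)|$, which one application of \cref{lem:sound_weak_lemma_3.3} handles with no dyadic decomposition; and since $(1+|t|)^{m/4-K}$ is absolutely integrable the contour shift to $\Re(s)=\tfrac12$ produces only the vertical integral bounded by $\CC^{1/4}x^{1/2}\lambda^{-K}$, with no horizontal segments at all. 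Your sharp-cutoff version works, but it forces you to carry out the dyadic analysis of the near-$x$ Perron remainder (where the lack of a pointwise Ramanujan bound makes \cref{lem:sound_weak_lemma_3.3} essential block by block, as you correctly note), to separately treat the tails $n\le x/2$ and $n\ge 2x$ via partial summation, and to control the horizontal segments with \eqref{eqn:convexity_1}; each of these steps is individually fine, and your case analysis at the endpoints $\sigma=\tfrac12$ and $\sigma=c$ is sound because $\log$ of the integrand is linear in $\sigma$, so the maximum is attained at an endpoint. The trade-off is that the paper's smooth kernel makes the estimate nearly one-line, whereas your approach is more elementary in its inputs but requires more bookkeeping; both arrive at the same exponent $1-\tfrac{1}{m+14}$ and the same $(\log x)^{\kappa^2/2}+\CC^{1/4}$ factor.
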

\begin{proof}
	Let $c>1$, $y>0$, and $\lambda>0$.  Let $K\geq 0$ be an integer.  A straightforward Perron integral calculation (see \cite[Equation 4.1]{Sound_weak}) reveals that
	\[
	\sum_{n\leq x}\lambda_{\pi}(n)=\frac{1}{2\pi i}\int_{c-i\infty}^{c+i\infty}L(s,\pi)\frac{x^s}{s}\Big(\frac{e^{\lambda s}-1}{\lambda s}\Big)^K ds+O\Big(\sum_{x<n\leq e^{K\lambda}x}|\lambda_{\pi}(n)|\Big).
	\]
	We choose $K=\lfloor m/4\rfloor+3$ and $\lambda=x^{-2/(m+14)}$.  It suffices to take $x$ large enough so that $\lambda\leq 1/K$, in which case  $e^{K\lambda}-1\leq 2K\lambda$.  Thus, using these bounds and the Cauchy--Schwarz inequality, we deduce from \cref{lem:sound_weak_lemma_3.3} that
	\[
	\sum_{x<n\leq e^{K\lambda}x}|\lambda_{\pi}(n)|\ll (e^{K\lambda}-1)^{\frac{1}{2}}x(\log x)^{\frac{\kappa^2}{2}}\ll \lambda^{\frac{1}{2}} x(\log x)^{\frac{\kappa^2}{2}}.
	\]
	
	It follows from \eqref{eqn:convexity_1} that if we push the contour to the line $\re(s)=1/2$, then
	\[
	\frac{1}{2\pi i}\int_{c-i\infty}^{c+i\infty}L(s,\pi)\frac{x^s}{s}\Big(\frac{e^{\lambda s}-1}{\lambda s}\Big)^K ds \ll \CC^{\frac{1}{4}}x^{\frac{1}{2}}\lambda^{-K} \int_{-\infty}^{\infty}(1+|t|)^{m/4-K}dt  \ll \CC^{\frac{1}{4}}x^{\frac{1}{2}}\lambda^{-K}.
	\]
	We obtain the lemma by combining our two estimates.
\end{proof}

The next lemma relates the mean values of $\lambda_{\pi}(n)/n^{i\phi}$ to a suitable average of $L(s,\pi)$.

\begin{lemma}
\label{lem:plancherel}
Let $L(s,\pi)\in\mathfrak{S}(m)$. Let $\phi\in\mathbb{R}$ and $T\geq 0$.  If $0\leq \lambda<\frac{1}{m+15}$, then
\[
\sqrt{2\pi T}\int_{-\infty}^{\infty}\Big(\frac{1}{e^y}\sum_{n\leq e^y}\frac{\lambda_{\pi}(n)}{n^{i\phi}}\Big)\exp\Big(\lambda y-\frac{T}{2}y^2\Big)dy=\int_{-\infty}^{\infty}\frac{L(1-\lambda+i\phi+it,\pi)}{1-\lambda+it}\exp\Big(-\frac{t^2}{2T}\Big)dt.
\] 
\end{lemma}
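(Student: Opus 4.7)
The plan is to show that both sides of the claimed identity are two expressions for the same Parseval (or Fourier-dual) pairing between the function
\[
h(y) \;:=\; \psi(e^y)\,e^{-(1-\lambda)y},\qquad \psi(x)\;:=\;\sum_{n\leq x}\frac{\lambda_{\pi}(n)}{n^{i\phi}},
\]
and the Gaussian $g(y) := e^{-Ty^2/2}$. Note that the left side of the lemma is precisely $\sqrt{2\pi T}\int h(y) g(y)\,dy$ once one substitutes the definition of $\psi$ back in and rewrites $e^{-(1-\lambda)y}=e^{\lambda y}/e^y$.

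The first step is to bound $\psi$. Applying partial summation to \cref{lem:partial_sums_L-function} (with $n^{-i\phi}$ in place of the trivial character—equivalently, using that $\lambda_{\pi}(n)n^{-i\phi}$ is the Dirichlet coefficient sequence of $L(s+i\phi,\pi)$, whose analytic conductor is comparable to $\CC$ up to a $(1+|\phi|)^m$ factor) gives $\psi(x)\ll_{\phi,\CC} x^{1-1/(m+14)}$ up to log and conductor factors. This power saving below $x$ is the crucial ingredient, since it allows Mellin representations to be pushed past the line $\re(s)=1$. In particular, Abel summation (equivalently the Mellin/Laplace identity) yields, for every $s$ with $\re(s) > 1-\tfrac{1}{m+14}$,
\[
\frac{L(s+i\phi,\pi)}{s} \;=\; \int_{1}^{\infty}\psi(x)\,x^{-s-1}\,dx \;=\; \int_{-\infty}^{\infty}\psi(e^{y})\,e^{-sy}\,dy,
\]
where the integral is extended to the whole line via $\psi(e^{y})=0$ for $y<0$. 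Since the hypothesis $0\leq\lambda<1/(m+15)$ ensures $\re(s)=1-\lambda>1-\tfrac{1}{m+14}$, we may specialize $s=1-\lambda+it$ and read this as $L(1-\lambda+i\phi+it,\pi)/(1-\lambda+it)=\hat{h}(t)$, the Fourier transform (with convention $\hat{h}(t)=\int h(y)e^{-ity}\,dy$) of the function $h$ above.

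The second step is a direct Fubini/Gaussian computation. Using the standard evaluation $\int_{-\infty}^{\infty}e^{-ity-t^{2}/(2T)}\,dt=\sqrt{2\pi T}\,e^{-Ty^{2}/2}$ (valid for $T>0$; the $T=0$ case of the lemma is trivial as both sides vanish), we obtain
\[
\int_{-\infty}^{\infty}\hat{h}(t)\,e^{-t^{2}/(2T)}\,dt \;=\; \int_{-\infty}^{\infty} h(y)\!\int_{-\infty}^{\infty} e^{-ity-t^{2}/(2T)}\,dt\,dy \;=\; \sqrt{2\pi T}\int_{-\infty}^{\infty} h(y)\,e^{-Ty^{2}/2}\,dy.
\]
Substituting $h(y)=\psi(e^{y})e^{-(1-\lambda)y}$ on the right and expanding $\psi$ recovers the left side of the lemma.

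The only delicate step is the justification of Fubini in the display above and of the Mellin identity on the line $\re(s)=1-\lambda$. Both reduce to verifying absolute convergence of $\int |h(y)|\,e^{-Ty^{2}/2}\,dy$; this follows easily from the bound $\psi(e^{y})\ll e^{y(1-1/(m+14))}$ together with the Gaussian decay of $g$, so there is no genuine obstacle—the main content of the lemma is conceptual, not technical.
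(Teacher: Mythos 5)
Your proof is correct and follows essentially the same approach as the paper: use the power-saving partial-sum bound from \cref{lem:partial_sums_L-function} to justify the Mellin/Fourier representation of $L(1-\lambda+i\phi+it,\pi)/(1-\lambda+it)$ for $\re(s)=1-\lambda$, then pair against the Gaussian. Unwinding Plancherel into an explicit Fubini computation (as you do) versus quoting Plancherel (as the paper does), and bounding the $\phi$-twisted partial sums directly versus the untwisted ones, are purely cosmetic choices here.
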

\begin{proof} 
By \cref{lem:partial_sums_L-function} and partial summation, we have that if $\re(s)\geq 1-\frac{1}{m+15}$, then
\[
\lim_{x\to\infty}\sum_{n\geq x} \frac{\lambda_{\pi}(n)}{n^s}=s\lim_{x\to\infty}\int_x^{\infty}\frac{-1}{t^{1+s}}\Big(\sum_{n\leq t}\lambda_{\pi}(n)\Big)dt\ll_{\pi} |s|\lim_{x\to\infty}\int_x^{\infty}\frac{t^{1-\frac{1}{m+14}}(\log t)^{\kappa^2/2}}{t^{1+\re(s)}}dt=0.
\]  
Therefore, for $0\leq \lambda<\frac{1}{m+15}$, we can compute the Fourier transform
\begin{align*}
\int_{-\infty}^{\infty}\Big(\frac{1}{e^y}\sum_{n\leq e^y}\frac{\lambda_{\pi}(n)}{n^{i\phi}}\Big)\exp(\lambda y)e^{-it}dt=\sum_{n\geq 1}\frac{\lambda_{\pi}(n)}{n^{i\phi}}\int_{\log n}^{\infty}e^{y(\lambda-1-it)}dy=\frac{L(1-\lambda+i\phi+it,\pi)}{1-\lambda+it}.
\end{align*}
The Fourier transform of $\exp(-\frac{T}{2}y^2)$ is
\[
\int_{-\infty}^{\infty}\exp\Big(-\frac{T}{2}y^2-iy\psi\Big)dy=\frac{\sqrt{2\pi}}{\sqrt{2T}}\exp\Big(\frac{-\psi^2}{2T}\Big),
\] 
so the lemma follows from Plancherel's formula.
\end{proof}

We are now ready to establish a variant of \cite[Proposition 3.1]{GS_JEMS}, which relates large partial sums of the Dirichlet coefficients of $L$-functions $L(s,\pi)\in\mathfrak{S}(m)$ to large values of of $L(s,\pi)$.  Let $\ell_{\kappa}$ be as in \eqref{eqn:ell_kappa_def}.
\begin{proposition}
\label{prop:zerosfromabove}
Let $L(s,\pi)\in\mathfrak{S}(m)$ with analytic conductor $\CC$, and let $\delta\in(0,\frac{1}{2}]$.  There exist constants $\Cl[abcon]{GS_Prop3.1},\Cl[abcon]{GS_Prop3.1_2}>0$ (depending on $\kappa$, $A_0$, $m$, and $\delta$) such that the following is true.  Let $\phi$ be as in \cref{lem:twist}. Let
\begin{equation}
\label{eqn:ranges_3.3}
\CC\geq \Cr{GS_Prop3.1},\qquad y_0\in[(\log \CC)^{1-1/(50m)},\tfrac{1}{2}\log\CC],\qquad N\in[1,y_0^{1/(100m)}].
\end{equation}
If
\begin{equation}
\label{eqn:Ndef_3.3}
\Big|\sum_{n\leq e^{y_0}}\lambda_{\pi}(n)\Big|=\frac{e^{y_0}y_0^{\kappa-1}}{N}.
\end{equation}
and
\begin{align}
\label{eq:A-y}
\Cr{GS_Prop3.1_2}\frac{N^{2/\ell_{\kappa}}}{y_0}\Big(\frac{\log\CC}{y_0}\Big)^{1-2\delta}\leq \lambda< \frac{1}{m+15},
\end{align}
then there exists
\begin{equation}
\label{eqn:xi_range}
\xi\in[-\lambda (y_0^{-1}\kappa\log{\CC})^{\delta},\lambda (y_0^{-1}\kappa\log{\CC})^{\delta}]
\end{equation}
such that
\begin{equation}
\label{eqn:3.10}
\frac{\lambda y_0}{4}\leq \log\Big|\frac{L(1-\lambda+i(\phi+\xi),\pi)}{L(1+\lambda+i(\phi+\xi),\pi)}\Big|.
\end{equation}
\end{proposition}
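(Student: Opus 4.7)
The plan is to apply \cref{lem:plancherel} with $T=\lambda/y_0$. With this choice, the weight $\exp(\lambda y-\tfrac{T}{2}y^2)$ on the left-hand side is a Gaussian in $y$ centered at $y=y_0$, with standard deviation $\sqrt{y_0/\lambda}$ and peak value $e^{\lambda y_0/2}$. To lower-bound the left-hand side I would combine \cref{lem:lipshitz} (which shows that $y\mapsto e^{-y}\sum_{n\leq e^y}\lambda_\pi(n)/n^{i\phi}$ is nearly constant on the bulk of the Gaussian) with \cref{lem:twist} (which relates the value at $y=y_0$ to $\tfrac{1+i\phi}{e^{(1+i\phi)y_0}}\sum_{n\leq e^{y_0}}\lambda_\pi(n)$); together with \eqref{eqn:Ndef_3.3} this yields $|\mathrm{LHS}|\gg (1+|\phi|)y_0^{\kappa-1}e^{\lambda y_0/2}/N$. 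The exponent $2/\ell_\kappa$ in \eqref{eq:A-y} is exactly what forces the Lipschitz error from \cref{lem:lipshitz} and the twist error from \cref{lem:twist} to be dominated by the main term.

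For the right-hand side, set $B=\lambda(\kappa\log\CC/y_0)^\delta$ and $M=\sup_{|\xi|\leq B}|L(1-\lambda+i(\phi+\xi),\pi)|$, and split the $t$-integral at $|t|=B$. The contribution from $|t|\leq B$ is bounded by $M\int_{|t|\leq B}e^{-t^2/(2T)}/|1-\lambda+it|\,dt\ll M\sqrt{T}$, while the tail is estimated using convexity \eqref{eqn:convexity_1}, namely $|L(1-\lambda+it,\pi)|\ll \CC^{\lambda/2}(\log\CC)^{\kappa}(1+|t|)^{m\lambda/2}$, against the Gaussian decay $e^{-t^2/(2T)}$. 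The crucial parameter is $B^2/(2T)=\tfrac12 \lambda y_0(\kappa\log\CC/y_0)^{2\delta}$, which must dominate $\tfrac12\lambda(\log\CC-y_0)$ so that $e^{-B^2/(2T)}$ absorbs $\CC^{\lambda/2}$ (up to polynomial factors); this is arranged by taking $\Cr{GS_Prop3.1_2}$ large enough relative to $\kappa$, $m$, and $\delta^{-1}$.

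From $|\mathrm{LHS}|\ll M\sqrt T$ one deduces $M\gg y_0^{\kappa-1/2}e^{\lambda y_0/2}/(N\sqrt{\lambda})$. Choosing $\xi\in[-B,B]$ to be the maximizer and invoking \cref{lem:sound_weak_lemma_3.2} gives $|L(1+\lambda+i(\phi+\xi),\pi)|\ll \lambda^{-\kappa}$, hence
\[
\Big|\frac{L(1-\lambda+i(\phi+\xi),\pi)}{L(1+\lambda+i(\phi+\xi),\pi)}\Big|\gg \frac{(\lambda y_0)^{\kappa-1/2}}{N}\,e^{\lambda y_0/2}.
\]
Taking logarithms and using $\lambda y_0\geq \Cr{GS_Prop3.1_2}N^{2/\ell_\kappa}$, this exceeds $e^{\lambda y_0/4}$ provided $\Cr{GS_Prop3.1_2}$ is sufficiently large. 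I expect the tail estimate to be the main obstacle: for $\delta<1/2$ and $y_0$ close to $(\log\CC)^{1-1/(50m)}$, the ratio $B^2/T/(\lambda\log\CC)=\kappa^{2\delta}(y_0/\log\CC)^{1-2\delta}$ need not exceed $1$, so the bound $e^{-B^2/(2T)}\CC^{\lambda/2}\ll 1$ is not immediate from naive convexity. Careful bookkeeping of the polynomial-in-$y_0$, $N$-dependent, and $(\log\CC)^{O(1)}$ slack factors (exactly what is afforded by the gap between $\ell_\kappa$ and $\min\{1,\kappa(1-\sqrt{2/\pi})\}$) allows $\Cr{GS_Prop3.1_2}$ to absorb the remaining factors and close the argument.
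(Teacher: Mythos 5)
Your high-level structure matches the paper: apply Lemma~\ref{lem:plancherel} to move to the $L$-function side, use Lemma~\ref{lem:lipshitz} and Lemma~\ref{lem:twist} to control the Gaussian-weighted mean-value integral, then split the $t$-integral at $|t|=B$ with $B=\lambda(\kappa\log\CC/y_0)^\delta$ and rule out the tail by convexity. However, the choice $T=\lambda/y_0$ (natural because it completes the square in $\exp(\lambda y-\tfrac{T}{2}y^2)$ with center $y_0$ and prefactor $e^{\lambda y_0/2}$) yields $B^2/(4T)=\tfrac14\lambda y_0\,(\kappa\log\CC/y_0)^{2\delta}$, which is \emph{linear} in $\lambda$. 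The quantity that must be dominated, $\tfrac{\lambda}{2}(\log\CC-y_0)$, is also linear in $\lambda$. Since $\lambda$ cancels, the sign of $B^2/(4T)-\tfrac{\lambda}{2}(\log\CC-y_0)$ depends only on $y_0/\log\CC$, $\kappa$, and $\delta$; for $y_0/\log\CC$ small and $\delta<\tfrac12$ (indeed for any $\kappa$ with $y_0=\tfrac12\log\CC$), the expression $\kappa^{2\delta}(y_0/\log\CC)^{1-2\delta}-2(1-y_0/\log\CC)$ is \emph{negative}, so $e^{-B^2/(4T)}\CC^{\lambda/2}$ is exponentially \emph{larger} than $e^{\lambda y_0/2}$. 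No constant $\Cr{GS_Prop3.1_2}$, and no polynomial slack from the gap $\ell_\kappa-\min\{1,\kappa(1-\sqrt{2/\pi})\}=1/1000$, can repair an error that is exponential in $\lambda\log\CC$ and whose sign does not depend on $\lambda$. This is a genuine gap.

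The paper escapes this because it takes a \emph{different} smoothing width: $T=\Cr{Tchoice}y_0^{-2}N^{2/\ell_\kappa}$, for which $B^2/(4T)=\frac{(\lambda y_0)^2(\kappa\log\CC/y_0)^{2\delta}}{4\Cr{Tchoice}N^{2/\ell_\kappa}}$ is \emph{quadratic} in $\lambda y_0$. Combined with the hypothesis $\lambda y_0\geq\Cr{GS_Prop3.1_2}N^{2/\ell_\kappa}(\log\CC/y_0)^{1-2\delta}$ from~\eqref{eq:A-y}, one factor of $\lambda y_0$ can be traded for $\Cr{GS_Prop3.1_2}N^{2/\ell_\kappa}(\log\CC/y_0)^{1-2\delta}$, producing $B^2/(4T)\geq\tfrac{\Cr{GS_Prop3.1_2}\kappa^{2\delta}}{4\Cr{Tchoice}}\lambda\log\CC$; enlarging $\Cr{GS_Prop3.1_2}$ now \emph{does} make this dominate $\tfrac{\lambda}{2}\log\CC$ plus the sub-exponential error. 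This is the step your proof is missing: the Gaussian parameter $T$ must be decoupled from $\lambda$ (and chosen to depend on $N$ and $y_0$), so that the lower bound on $\lambda$ can be fed back into the tail estimate. (You should also beware that when $T\neq\lambda/y_0$ the completing-the-square step changes form — the Gaussian factor is then $\exp(\lambda^2/(2T))\exp(-\tfrac{T}{2}(y-\lambda/T)^2)$ rather than $\exp(\lambda y_0/2)\exp(-\tfrac{T}{2}(y-y_0)^2)$ — and the Lipschitz/twist analysis has to be re-centered accordingly.)
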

\begin{proof}
Let $\phi$ be as in \cref{lem:twist}, and let $T>0$. For any $y\geq 0$, we have uniformly 
\begin{equation}\label{variation}
\Big|\frac{1}{e^y}\sum_{n\leq e^{y}}\frac{\lambda_{\pi}(n)}{n^{i\phi}}-\frac{1}{e^{y_0}}\sum_{n\leq e^{y_0}}\frac{\lambda_{\pi}(n)}{n^{i\phi}}\Big|\ll  \frac{1+|y-y_0|^{\ell_{\kappa}}}{y_0^{\ell_{\kappa}}}y^{\kappa-1}.
\end{equation}
(Indeed, if $|y-y_0|\geq y_0/3$ then this inequality immediately follows from \cref{lem:triv_bound_holds}.	 For $|y-y_0|\leq y_0/3,$ we invoke \cref{lem:lipshitz} after appropriate substitutions.) By \eqref{variation}, we see that
\begin{align*}
&\sqrt{2\pi T}\int_{-\infty}^{\infty}\Big(\frac{1}{e^y}\sum_{n\leq y}\frac{\lambda_{\pi}(n)}{n^{i\phi}}\Big)\exp\Big(\lambda y-\frac{T}{2}y^2\Big)dy\\
&=\sqrt{2\pi T}\exp\Big(\frac{\lambda y_0}{2}\Big)\int_{-\infty}^{\infty}
\Big(\frac{1}{e^{y_0}}\sum_{n\leq e^{y_0}}\frac{\lambda_{\pi}(n)}{n^{i\phi}}+O\Big(\frac{1+|y-y_0|^{\ell_{\kappa}}}{y_0^{\ell_{\kappa}}}|y|^{\kappa-1}\Big)\Big)e^{-\frac{T}{2}(y-y_0)^2}dy\\
&=2\pi\exp\Big(\frac{\lambda y_0}{2}\Big)\Big[\frac{1}{e^{y_0}}\sum_{n\leq e^{y_0}}\frac{\lambda_{\pi}(n)}{n^{i\phi}} + O\Big(\sqrt{T}\int_{-\infty}^{\infty}\frac{1+|y-y_0|^{\ell_k}}{y_0^{\ell_k}}|y|^{\kappa-1}e^{-\frac{T}{2}(y-y_0)^2}dy\Big)\Big].
\end{align*}
We apply \cref{lem:twist} to the main term on the last line of the above display; consequently, there exists a constant $\Cl[abcon]{prop3.3_1}=\Cr{prop3.3_1}(\kappa,m,\delta,A_0)>0$ such that
\begin{equation}
\label{eqn:prop3.3}
\begin{aligned}
&\Big|\frac{\sqrt{2\pi T}}{2\pi\exp(\frac{\lambda y_0}{2})}\int_{-\infty}^{\infty}\Big(\frac{1}{e^{y}}\sum_{n\leq e^{y}}\frac{\lambda_{\pi}(n)}{n^{i\phi}}\Big)\exp\Big(\lambda y-\frac{T}{2}y^2\Big)dy-\frac{(1+i\phi)}{e^{y_0(1+i\phi)}} \sum_{n\leq e^{y_0}}\lambda_{\pi}(n)\Big|\\
&\leq \Cr{prop3.3_1}\Big[y_0^{\kappa-1-\frac{1}{5}}+\sqrt{T}\int_{-\infty}^{\infty}\frac{1+|y-y_0|^{\ell_k}}{y_0^{\ell_k}}\frac{|y|^{\kappa-1}}{e^{\frac{T}{2}(y-y_0)^2}}dy\Big].
\end{aligned}\hspace{-1mm}
\end{equation}
Using the substitution $t=y-y_0$ and the bounds
\[
N\geq 1,\qquad |y|^{\kappa-1}\leq (|y-y_0|+y_0)^{\kappa-1}\leq 2^{\kappa-2}(|y-y_0|^{\kappa-1}+y_0^{\kappa-1}),
\]
we find that there exists a constant $\Cl[abcon]{Tchoice}=\Cr{Tchoice}(\kappa,m,A_0,\delta)>0$ such that if
\[
T = \Cr{Tchoice}y_0^{-2}N^{2/\ell_{\kappa}},
\]
then \eqref{eqn:prop3.3} is at most $\pi y_0^{\kappa-1}/N$.  Therefore, by \cref{lem:plancherel} and \eqref{eqn:Ndef_3.3}, we conclude that
\begin{align*}
\pi\exp\Big(\frac{\lambda y_0}{2}\Big)\frac{y_0^{\kappa-1}}{N}&\leq\Big|\sqrt{2\pi T}\int_{-\infty}^{\infty}\Big(\frac{1}{e^y}\sum_{n\leq y}\frac{\lambda_{\pi}(n)}{n^{i\phi}}\Big)\exp\Big(\lambda y-\frac{T}{2}y^2\Big)dy\Big|\\
&=\Big|\int_{-\infty}^{\infty}\frac{L(1-\lambda+i\phi+it,\pi)}{1-\lambda+it}\exp\Big(-\frac{t^2}{2T}\Big)dt\Big|\\
&\leq \max_{\xi\in\mathbb{R}} \frac{|L(1-\lambda+i(\phi+\xi),\pi)|}{|1-\lambda+i\xi|}\exp\Big(-\frac{\xi^2}{4T}\Big)\int_{\mathbb{R}}\exp\Big(\frac{-t^2}{4T}\Big)dt\\
&=2\sqrt{\pi T}\max_{\xi\in\mathbb{R}} \frac{|L(1-\lambda+i(\phi+\xi),\pi)|}{|1-\lambda+i\xi|}\exp\Big(-\frac{\xi^2}{4T}\Big).
\end{align*}
With our choice of $T$, we may simplify the above display to
\begin{equation}
\label{eqn:max_xi}
\sqrt{\frac{\pi}{4\Cr{Tchoice}}}\frac{y^{\kappa}_0}{N^{1+1/\ell_{\kappa}}}\exp\Big(\frac{\lambda y_0}{2}\Big)\leq  \max_{\xi\in\mathbb{R}} \frac{|L(1-\lambda+i(\phi+\xi),\pi)|}{|1-\lambda+i\xi|}\exp\Big(-\frac{y^2_0\xi^2 }{4\Cr{Tchoice}N^{2/\ell_{\kappa}}}\Big).
\end{equation}

The maximum in~\eqref{eqn:max_xi} occurs at a value of $\xi\in\R$ satisfying \eqref{eqn:xi_range}.  Indeed, suppose otherwise, to the contrary.  We use~\eqref{eqn:convexity_1} and \eqref{eqn:ranges_3.3} to obtain the bound
\begin{align*}
\max_{\xi\in\mathbb{R}} \frac{|L(1-\lambda+i(\phi+\xi),\pi)|}{|1-\lambda+i\xi|}\exp\Big(\frac{-y^2_0\xi^2 }{4\Cr{Tchoice}N^{2/\ell_{\kappa}}}\Big)&\ll \CC^{\frac{\lambda}{2}}(\log \CC)^{\kappa}\exp\Big(-\frac{(\lambda y_0)^2(y_0^{-1}\kappa\log \CC)^{2\delta}}{4\Cr{Tchoice}N^{2/\ell_{\kappa}}}\Big).
\end{align*}
This upper bound, together with the lower bound in \eqref{eqn:max_xi}, contradicts \eqref{eq:A-y} when $\Cr{GS_Prop3.1}$ and $\Cr{GS_Prop3.1_2}$ are made sufficiently large.  We may therefore choose $\xi$ satisfying \eqref{eqn:xi_range} so that the maximum in~\eqref{eqn:max_xi} is attained.  For such a choice of $\xi$, \eqref{eqn:max_xi} yields
\begin{equation}
\label{eqn:nonvanish}
\sqrt{\frac{y_0}{\lambda}}\frac{y_0^{\kappa-1}}{N}\exp\Big(\frac{\lambda y_0}{2}\Big)\ll |L(1-\lambda+i(\phi+\xi),\pi)|.
\end{equation}
In light of the bound $|L(1+\lambda+i(\phi+\xi),\pi)|\ll \lambda^{-\kappa}$ per \cref{lem:sound_weak_lemma_3.2}, we conclude that
\begin{align*}
\frac{(\lambda y_0)^{\kappa-\frac{1}{2}}}{N}\exp\Big(\frac{\lambda y_0}{2}\Big)\ll \frac{|L(1-\lambda+i(\phi+\xi),\pi)|}{|L(1+\lambda+i(\phi+\xi),\pi)|}.
\end{align*}
Taking logarithms, applying \eqref{eq:A-y}, and taking $\Cr{GS_Prop3.1}$ and $\Cr{GS_Prop3.1_2}$ to be sufficiently large, we deduce the proposition.
\end{proof}

\section{A ``hybrid Euler--Hadamard product''}
\label{sec:explicit}

Following \cite[Lemma 3.1]{GS_JEMS} (see also \cite{FGGHT}), one can use the Hadamard factorization of $L(s,\pi)$ to relate the right hand side of \eqref{eqn:3.10} to the sum
\[
\sum_{\rho}\frac{\lambda^2}{|1+\lambda+it-\rho|^2}
\]
over nontrivial zeros $\rho$ of $L(s,\pi)$.  This sum can be bounded using the triangle inequality and basic properties of the zeros of $L(s,\pi)$.  In order to quantitatively improve upon the results in \cite{FGGHT,GS_JEMS}, we deviate from this approach and prove the following result.
\begin{proposition}
\label{prop:HadamardEuler}
Let $u\colon \R\to[0,\infty)$ be an infinitely differentiable function with compact support contained in the interval $[1,e]$ such that $\int_0^{\infty}u(t)dt=1$.  For $s=\sigma+it$, let $\hat{u}(s)=\int_0^{\infty}u(t)t^{s-1}dt$ be the Mellin transform of $u$.  For $x\geq 0$, define $v(x)=\int_x^{\infty}u(t)dt$.

Let $L(s,\pi)\in\mathfrak{S}(m)$.  If $k\geq 3$, $K>0$, $X\geq e^{3m^4}$, 
\[
|\Re{(s-1)}|\ll \frac{1}{\log X},
\]
and $s$ is not a zero of $L(s,\pi)$, then
\begin{align*}
\log{L(s,\pi)}=\sum_{n=2}^{\infty}\frac{a_{\pi}(n)\Lambda(n)}{n^{s}\log n}v(e^{\frac{\log{n}}{\log X}})-\sum_{|s-\rho|\leq K/\log X}U((s-\rho)\log X) +O\Big(\frac{e^{4k}\log \CC(it)}{K^{k-2}\log X}\Big).
\end{align*} 
\end{proposition}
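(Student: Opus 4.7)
The plan is to bypass the logarithmic branch-cut complications in $\log L(s,\pi)$ by first establishing a companion identity for $-L'/L(s,\pi)$ (where the integrand has only simple poles) via Mellin inversion and contour shift, and then integrating along the real axis from $s$ to $+\infty$ to recover the claimed formula for $\log L$. This is the Gonek--Hughes--Keating hybrid strategy adapted to the class $\mathfrak{S}(m)$ and to the near-$\Re(s)=1$ regime of interest.

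Integration by parts in $v(x)=\int_x^\infty u(t)\,dt$ gives $\hat v(w)=\hat u(w+1)/w$ for $\Re(w)>0$, together with the normalization $\hat u(1)=\int u=1$. Applying Mellin inversion to $v(n^{1/\log X})$ and substituting $w=z\log X$ produces, for $\Re(s)>1$ and $c>0$,
\begin{equation*}
Q(s):=\sum_{n\geq 2}\frac{a_\pi(n)\Lambda(n)v(e^{(\log n)/\log X})}{n^s}=\frac{1}{2\pi i}\int_{(c)}\frac{\hat u(z\log X+1)}{z}\Big({-}\frac{L'}{L}\Big)(s+z,\pi)\,dz;
\end{equation*}
the interchange of sum and integral is justified by the rapid decay of $\hat u$ in vertical strips and absolute convergence of $-L'/L$. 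Shifting the contour to $\Re(z)=-K/\log X$ (with the line of integration adjusted as $\Re(w)$ varies in the subsequent step so that the set of crossed zeros stays fixed), the integrand has only simple poles: the residue at $z=0$ is $\hat u(1)(-L'/L)(s,\pi)=-L'/L(s,\pi)$, and the residue at each crossed $z=\rho-s$ is $-m_\rho\hat u(1+(\rho-s)\log X)/(\rho-s)$, where $m_\rho$ is the multiplicity. Integrating the resulting identity for $Q(w)$ in $w$ along the horizontal ray from $s$ to $s+\infty$, and using $\int_s^\infty n^{-w}dw=n^{-s}/\log n$ and $\int_s^\infty(-L'/L)(w,\pi)\,dw=\log L(s,\pi)$, together with the change of variable $y=(w-\rho)\log X$ to evaluate
\begin{equation*}
\int_s^\infty \frac{\hat u(1+(\rho-w)\log X)}{\rho-w}\,dw=-U((s-\rho)\log X),\qquad U(\zeta):=\int_\zeta^\infty\frac{\hat u(1-y)}{y}\,dy,
\end{equation*}
yields, after rearranging,
\begin{equation*}
\log L(s,\pi)=\sum_{n\geq 2}\frac{a_\pi(n)\Lambda(n)v(e^{(\log n)/\log X})}{n^s\log n}-\sum_{\rho}m_\rho U((s-\rho)\log X)-\int_s^\infty E(w)\,dw,
\end{equation*}
where the zero sum runs over zeros with $\Re(\rho)>\Re(s)-K/\log X$ and $E(w)$ is the shifted-contour integral. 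Note that $U(\zeta)\sim -\log\zeta$ as $\zeta\to 0$, matching the logarithmic singularity of $\log L$ at zeros and confirming the structure.

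To match the stated form, the zero sum is truncated to $|s-\rho|\leq K/\log X$; the remaining zeros are absorbed into the error using the rapid decay $|U(\zeta)|\ll_k |\zeta|^{-k}$ (inherited from $\hat u$) combined with the Riemann--von Mangoldt count $\#\{\rho:|\Im\rho-t|\leq T\}\ll(T+1)\log\CC(it)$. The residual $\int_s^\infty E(w)\,dw$ is bounded by: (i) the $k$-fold integration-by-parts estimate $|\hat u(1-K+i\tau)|\ll \|u^{(k)}\|_1(K^2+\tau^2)^{-k/2}\ll e^{4k}(K^2+\tau^2)^{-k/2}$ (Gevrey-type growth of the derivatives of $u$); (ii) the convexity/Hadamard bound $|L'/L(w+z,\pi)|\ll\log\CC(it)$ on the shifted line away from zeros, stemming from \eqref{eqn:convexity_1}; and (iii) the effect of the $w$-integration, which (upon interchanging the $dw$ and $dz$ integrals and evaluating the inner one) supplies the additional factor $1/\log X$. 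Combining these gives the claimed $O(e^{4k}\log\CC(it)/(K^{k-2}\log X))$.

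The principal obstacle lies in the multi-layer error analysis, where one must balance the Gevrey growth $\|u^{(k)}\|_1\ll e^{4k}$ against the $k$-fold decay of $\hat u$ (producing the $K^{k-2}$ after absorbing the factor $1/|z|\leq\log X/K$ from the integrand), and simultaneously extract the $1/\log X$ from the $w$-integration. A secondary subtlety is controlling $|L'/L|$ on the shifted contour when zeros of $L(s,\pi)$ cluster nearby, which requires the Hadamard zero-sum representation and a careful use of standard zero-counting bounds; the hypothesis that $s$ is not itself a zero ensures the $z=0$ residue is unambiguous and that the contour shift avoids crossing a singularity at the starting point.
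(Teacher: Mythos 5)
Your overall skeleton --- Mellin inversion for $-L'/L$, a leftward contour shift picking up residues at $z=0$ and at zeros $\rho$, and then integration over $w\in[s,\infty)$ to recover $\log L$ via the identity $\int_s^\infty \hat u(1+(\rho-w)\log X)/(\rho-w)\,dw = -U((s-\rho)\log X)$ --- matches the paper's proof of \cref{lem:explicit1}, and your truncation of the far-away zeros is essentially what \cref{lem:nontrivial} does. The substantive deviation is that you stop the contour shift at $\Re(z)=-K/\log X$ rather than pushing it to $-\infty$, which forces you to carry the extra residual $\int_s^\infty E(w)\,dw$.

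That residual is where the gap lies. Your ingredient (ii), the bound $|L'/L(w+z,\pi)|\ll\log\CC(it)$ on the shifted line, is not correct as stated: $L'/L$ has simple poles at the nontrivial zeros, and the line $\Re(s+z)=\sigma-K/\log X$ lies inside the critical strip, so zeros can sit arbitrarily close to (or on) it. Only the regular part in the local expansion $L'/L(w)=\sum_{|w-\rho|\le 1}(w-\rho)^{-1}+O(\log\CC(it))$ satisfies such a bound, and nothing in your sketch shows that the singular part, once convolved against $\hat u((s'-s)\log X+1)/(s'-s)$ along the shifted line and then integrated again over $w\in[s,\infty)$, contributes $\ll e^{4k}\log\CC(it)/(K^{k-2}\log X)$; unwinding that contribution essentially reproduces the very zero-sum you set out to isolate, so the accounting is circular rather than completed. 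Your ingredient (iii) is also unjustified: after interchanging, the inner $w$-integral of $-L'/L$ yields $\log L(s+z)$, not a factor of $1/\log X$, and in $\int|\hat u(z\log X+1)/z|\,|dz|$ the substitution $\zeta=z\log X$ makes the $\log X$'s cancel, leaving no such factor. In the paper the $1/\log X$ arises purely from the zero-truncation step (\cref{lem:nontrivial}), via the factor $r$ in the Linnik-type local count $\#\{\rho:|\rho-(1+it)|\le r\}\ll r\log\CC(it)$ of \cref{lem:zeroc}; your sketch cites only the coarser unit-interval count of \cref{lem:rvm}, which by itself does not supply the $1/\log X$. The paper avoids the residual altogether by pushing the contour to $\Re=-\infty$, where the decay of \cref{lem:u1} kills it, at the modest cost of accounting for the trivial zeros from the gamma factor in \eqref{eqn:trivial_zero_bound} --- a step your sketch also omits, though it would be harmless in your half-shifted setup.
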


One may think of \cref{prop:HadamardEuler} as a variant of the hybrid ``Euler--Hadamard product'' for the Riemann zeta function proved by Gonek, Hughes, and Keating \cite[Theorem 1]{GonekHughesKeating}.  See also Bombieri and Hejhal \cite[Lemma 1]{BombieriHejhal}.  In contrast with these results, \cref{prop:HadamardEuler} maintains uniformity in $\CC$, which is crucial for our results.  Also, \cref{prop:HadamardEuler} focuses on $s$ near the line $\Re(s)=1$, whereas the results in \cite{BombieriHejhal,GonekHughesKeating} focus on $s$ near the line $\re(s)=\frac{1}{2}$.

\subsection{Notation and hypotheses}
\label{subsec:notation_hypotheses}

Let $s=\sigma+it$, and let $X\geq e^{3m^4}$.  For a function $g\colon[0,\infty)\to\R$, we define the Mellin transform
\[
\hat{g}(s)=\int_0^{\infty}g(t)t^{s-1}dt.
\]
Let $u\colon\R\to[0,\infty)$ be an infinitely differentiable function whose support is a compact subset of $[1,e]$.  Given $x\geq 0$ define
\begin{align}
\label{eq:vdef}
v(x) = \int_x^{\infty}u(t)dt.
\end{align}
For $z\in \mathbb{C}$ define 
\begin{align}
\label{eq:Udef}
E_1(z)=\int_{z}^{\infty}\frac{e^{-w}}{w}dw,\qquad U(z)=\int_{0}^{\infty}u(x)E_1(z\log x)dx.
\end{align}
By \cite[Equation 12]{GonekHughesKeating}, if $s_0=\sigma_0+it_0$ and $z=x+iy$ are complex numbers, then
\begin{align}
\label{eq:U1}
U((s_0-z)\log X)=\int_{\sigma_0}^{\infty}\frac{\hat{u}(1-(\sigma+it_0-z)\log X)}{\sigma+it_0-z}d\sigma.
\end{align}

\subsection{Preliminary estimates}

We begin with some relationships between $u$ and $v$.

\begin{lemma}
\label{lem:u1}
Let $k\geq 1$ be an integer.  If $u$ and $v$ are as in \cref{subsec:notation_hypotheses} and $s=\sigma+it$, then
\begin{align*}
\hat{v}(s)&=\hat{u}(s+1)/s,\quad |\hat{u}^{(k)}(s)|\leq \hat{u}(\sigma),\quad |\hat{u}(s)|\leq\max_{x\in\R}|{u}^{(k)}(x)|e^{\max\{\sigma,0\}+4k}(1+|s|)^{-k}.
\end{align*}
\end{lemma}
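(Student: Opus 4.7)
The three claims are standard identities and estimates for the Mellin transform of a smooth, compactly supported function; I will prove each separately. For the first identity, I would integrate $\hat v(s)=\int_0^\infty v(x)x^{s-1}\,dx$ by parts with antiderivative $x^s/s$ and $v'(x)=-u(x)$. The boundary contribution at $x=e$ vanishes because $v$ is supported in $[0,e]$, while the contribution at $x=0^+$ vanishes for $\Re(s)>0$ since $v(0)=1$ is finite and $x^s\to 0$. The remaining integral equals $s^{-1}\int_0^\infty u(x)x^s\,dx=\hat u(s+1)/s$, and analytic continuation yields the identity for all $s\neq 0$. For the second inequality, I would differentiate under the integral sign to obtain $\hat u^{(k)}(s)=\int_1^e u(t)(\log t)^k t^{s-1}\,dt$; since $u\geq 0$ and $|\log t|\leq 1$ on $[1,e]$, moving absolute values inside the integral immediately yields $|\hat u^{(k)}(s)|\leq\int_1^e u(t)t^{\sigma-1}\,dt=\hat u(\sigma)$.

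For the third bound, the plan is to integrate by parts $k$ times in $\hat u(s)=\int_1^e u(t)t^{s-1}\,dt$. Because $u$ is compactly supported in the open interval $(1,e)$, $u$ and all its derivatives vanish at $t=1$ and $t=e$, so the boundary terms disappear at every step. The result is
\[
\hat u(s)=\frac{(-1)^k}{s(s+1)\cdots(s+k-1)}\int_1^e u^{(k)}(t)t^{s+k-1}\,dt,
\]
which together with the trivial estimate $\int_1^e t^{\sigma+k-1}\,dt\leq e^{\max\{\sigma,0\}+k}$ gives
\[
|\hat u(s)|\leq \frac{\max|u^{(k)}|\,e^{\max\{\sigma,0\}+k}}{|s(s+1)\cdots(s+k-1)|}.
\]

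The main obstacle is that the denominator $|s(s+1)\cdots(s+k-1)|$ is not uniformly comparable to $(1+|s|)^k$ from below. I would handle this by a case split on $|s|$. When $|s|\geq 2k$, each factor satisfies $|s+j|\geq|s|/2$, yielding $|s(s+1)\cdots(s+k-1)|\geq(|s|/2)^k\geq 4^{-k}(1+|s|)^k$, and $4^k\leq e^{3k}$. When $1\leq|s|<2k$ and $\sigma\geq 0$, the bound $|s+j|\geq j$ for $j\geq 1$ together with $|s|\geq 1$ gives a product at least $(k-1)!$, and Stirling's formula converts $(1+|s|)^k/(k-1)!\leq(3k)^k/(k-1)!$ into a bound of size $e^{3k}$. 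Finally, when $|s|<1$, I would discard the integration by parts and instead combine the trivial bound $|\hat u(s)|\leq(e-1)\,e^{\max\{\sigma,0\}}\max|u|$ with the inequality $\max|u|\leq(e-1)^k\max|u^{(k)}|$ (which follows by iterating the fundamental theorem of calculus $k$ times from the left endpoint of the support, where $u$ and all derivatives vanish); since $(1+|s|)^{-k}\geq 2^{-k}$, one checks $(e-1)^{k+1}2^k\leq e^{4k}$ with ample room. Every case is absorbed into the factor $e^{4k}$, completing the proof.
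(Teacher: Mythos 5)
The paper dispatches this lemma with a one-line citation to the proof of Bombieri--Hejhal (\cite[Lemma 1]{BombieriHejhal}), so there is no in-paper argument to compare against; I will therefore just evaluate your proposal. Your proofs of the first two claims are correct: the integration by parts giving $\hat v(s)=\hat u(s+1)/s$, and the estimate $|\hat u^{(k)}(s)|\le\hat u(\sigma)$ using $0\le\log t\le 1$ and $u\ge 0$, are both exactly right.

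Your proof of the third bound has a genuine gap. Your case analysis covers $|s|\ge 2k$, then $1\le|s|<2k$ with $\sigma\ge 0$, then $|s|<1$, but silently omits the range $1\le|s|<2k$ with $\sigma<0$. This range is not vacuous in the paper's applications (in \cref{lem:Ularge} and \cref{lem:explicit1}, $\hat u$ is evaluated at arguments whose real part is negative and of size up to a multiple of $\log X$), and it is precisely where the bound coming from your integration by parts breaks down: if $s=-j_0+i\epsilon$ for an integer $1\le j_0\le k-1$ and small $\epsilon$, then $|s|\approx j_0\in[1,2k)$ and $\sigma<0$, yet the factor $|s+j_0|=|\epsilon|$ in your denominator $|s(s+1)\cdots(s+k-1)|$ tends to $0$ while $(1+|s|)^{-k}$ stays bounded. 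Thus the lower bound $|s(s+1)\cdots(s+k-1)|\gg e^{-3k}(1+|s|)^k$ that your argument requires is simply false near negative integers, and the representation $\hat u(s)=\frac{(-1)^k}{s(s+1)\cdots(s+k-1)}\int_1^e u^{(k)}(t)t^{s+k-1}\,dt$ does not directly give the claimed estimate there.

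The missing observation is that the apparent poles cancel: because $u$ and all its derivatives vanish at the endpoints of $[1,e]$, repeated integration by parts shows $\int_1^e u^{(k)}(t)\,t^j\,dt=0$ for every integer $0\le j\le k-1$, so the numerator $\int_1^e u^{(k)}(t)t^{s+k-1}\,dt$ also vanishes at $s=-j_0$ for $0\le j_0\le k-1$. One must exploit this cancellation --- for instance by subtracting from $t^{s+k-1}$ a polynomial of degree $<k$ interpolating it suitably before estimating --- to obtain a bound that stays controlled near the negative integers $0,-1,\dots,-(k-1)$. Without some such argument, the case $\sigma<0$, $1\le|s|<2k$ remains unproved.
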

\begin{proof}
See the proof of \cite[Lemma 1]{BombieriHejhal}.
\end{proof}
Our next result provides a useful estimate for $U$ in \eqref{eq:U1}.
\begin{lemma}
\label{lem:Ularge}
Let  $X\geq e^{3m^4}$.  Let $s_0=\sigma_0+it_0$ and $z=x+iy$ be complex numbers satisfying $\im(s_0)\neq \im(z)$.  Assume that there exists a constant $\Cl[abcon]{Lem4.3}>0$ such that
\begin{align}
\label{eq:s0zassumption}
\re(s_0-z)> -\frac{\Cr{Lem4.3}}{\log X}.
\end{align}
If $k\geq 1$, then
\begin{align}
\label{eq:U1-11}
U((s_0-z)\log X)\ll \frac{e^{4k}}{(\Re(s_0-z)+|\im(s_0-z)|)^k (\log X)^{k}},
\end{align}
\end{lemma}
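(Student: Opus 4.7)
The plan is to convert \eqref{eq:U1} into a rescaled integral, apply the rapid-decay bound of \cref{lem:u1} to the Mellin transform $\hat u$, and then bound the resulting one-variable integral by case analysis.

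After the substitution $\sigma=\sigma_0+\tau/\log X$ in \eqref{eq:U1}, one obtains
\[
U((s_0-z)\log X)=\int_0^\infty \frac{\hat u(1-w-\tau)}{w+\tau}\,d\tau, \qquad w:=(s_0-z)\log X=\beta+i\gamma.
\]
By the hypotheses, $\gamma\ne 0$ and $\beta>-\Cr{Lem4.3}$, so for $\tau\ge 0$ the real part $1-\beta-\tau$ of the argument of $\hat u$ is at most $1+\Cr{Lem4.3}$, which makes the exponential factor in \cref{lem:u1} a bounded constant. That lemma then yields $|\hat u(1-w-\tau)|\ll e^{4k}(1+|1-w-\tau|)^{-k}$, whence
\[
|U((s_0-z)\log X)|\ll e^{4k}\int_0^\infty \frac{d\tau}{(1+|1-w-\tau|)^k\,|w+\tau|}.
\]

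It remains to show that this integral is $\ll(\beta+|\gamma|)^{-k}$. I would split the range at $\tau_*:=C\max\{1,\beta+|\gamma|\}$ for a sufficiently large constant $C$. On the tail $\tau\ge\tau_*$, the bounds $(1+|1-w-\tau|)\gg\tau$ and $|w+\tau|\gg\tau$ produce an integrand of size $\ll\tau^{-(k+1)}$ whose integral is $\ll\tau_*^{-k}\ll(\beta+|\gamma|)^{-k}$. On $[0,\tau_*]$, the inequality $|w+\tau|\ge(|\beta+\tau|+|\gamma|)/\sqrt 2\ge(\beta+|\gamma|)/\sqrt 2$ (valid for all $\tau\ge 0$ because $|\beta+\tau|\ge\beta$ whether $\beta\ge 0$ or $\beta<0$) contributes one factor of $(\beta+|\gamma|)^{-1}$, while a further subdivision at the band $|1-\beta-\tau|\le|\gamma|$ (of length $O(|\gamma|)$), using the bound $(1+|1-w-\tau|)^k\ge(1+|1-\beta-\tau|)^k$ off the band, yields the remaining $(\beta+|\gamma|)^{1-k}$.

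The main obstacle is the case $k=1$, in which the integral $\int(1+|1-\beta-\tau|)^{-1}d\tau$ off the band is only logarithmic in $\tau_*$; this is resolved by applying \cref{lem:u1} with $k+1$ instead of $k$ to strengthen the decay at the cost of a single multiplicative factor of $e^4$. Finally, reverting to the original variables via $\beta+|\gamma|=(\re(s_0-z)+|\im(s_0-z)|)\log X$ converts the estimate into the claimed bound \eqref{eq:U1-11}.
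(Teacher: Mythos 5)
Your proposal is correct and rests on the same two pillars as the paper's proof: the identity \eqref{eq:U1} together with the decay estimate $|\hat u(s)|\ll e^{4k}(1+|s|)^{-k}$ supplied by \cref{lem:u1} once the real-part hypothesis \eqref{eq:s0zassumption} makes the exponential factor in that lemma bounded. The difference is in how the resulting one-variable integral is estimated. The paper keeps the $\sigma$-dependence in the factor $|\sigma+it_0-z|^{-1}$ and combines it with the decay of $\hat u$, so that the integrand is $\ll(|\sigma-x|+|\im(s_0-z)|)^{-(k+1)}(\log X)^{-k}$; this integrates in a single line to the claimed $\ll(\re(s_0-z)+|\im(s_0-z)|)^{-k}(\log X)^{-k}$, with no case analysis and no special treatment of $k=1$. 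You instead bound $|w+\tau|^{-1}$ uniformly by $\ll(\beta+|\gamma|)^{-1}$ on the compact range, which throws away one power of decay in $\tau$; recovering the remaining $(\beta+|\gamma|)^{1-k}$ then forces the tail-plus-band decomposition, and the lost decay is exactly what makes the off-band integral logarithmically divergent at $k=1$, necessitating the $k\mapsto k+1$ patch. Both routes reach the stated bound, and the extra factor of $e^4$ and the $k$-dependent constants ($2^{k-1}$, $\max_x|u^{(k+1)}(x)|$) you pick up are harmless: in the application $k=\delta^{-1}+2$ is fixed, and the paper's own proof already silently absorbs $\max_x|u^{(k)}(x)|$. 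The paper's version is shorter because it never decouples the two factors in the integrand.
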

\begin{proof}
Our argument is similar to the analysis in \cite[pp. 842]{BombieriHejhal}.
The assumption \eqref{eq:s0zassumption} combined with Lemma \ref{lem:u1} implies for any integer $k\geq 1$ and real number $\sigma$ satisfying  $\sigma-\re(z)\geq \re(s_0)$ that
\begin{align*}
\hat{u}(1-(\sigma+i\im(s_0-z)\log X)\ll \frac{e^{4k}}{(|\sigma-x|+|\im(s_0-z)|)^{k}(\log X)^{k}}.
\end{align*}
By \eqref{eq:U1}, we conclude that
\begin{align*}
 U((s_0-z)\log X)&=\int_{\sigma_0+it_0}^{\infty+it_0}\frac{\hat{u}(1-(s-z)\log X)}{s-z}ds \\
&\ll \frac{e^{4k}}{(\log X)^{k}}\int_{\sigma_0}^{\infty}\frac{1}{(|\sigma-x|+|\im(s_0-z)|)^{k+1}}d\sigma\\
&\ll \frac{e^{4k}}{((\Re(s_0-z)+|\im(s_0-z)|)^k(\log X)^{k}}.\qedhere
\end{align*}
\end{proof}

Our next result relates $-(L'/L)(s,\pi)$ to a smoothed sum of its Dirichlet coefficients plus a rapidly decaying sum over the nontrivial zeros of $L(s,\pi)$ via contour integration.

\begin{lemma}
\label{lem:explicit1}
Let $L(s,\pi)\in\mathfrak{S}(m)$ and $X\geq e^{3m^4}$.  Let $\rho=\beta+i\gamma$ denote a nontrivial zero of $L(s,\pi)$.  If $s=\sigma+it$ is not a zero of $L(s,\pi)$, $k\geq 3$ is an integer, and $\sigma\geq 1-\frac{1}{2m}$, then
\begin{align*}
\log L(s,\pi)=\sum_{n=2}^{\infty}\frac{a_{\pi}(n)\Lambda(n)}{n^{s}\log{n}}v(e^{\frac{\log{n}}{\log X}})-\sum_{\rho}U((s-\rho)\log X) +O\Big(\frac{e^{4k}}{(\log X)^{k}}\Big).
\end{align*}
\end{lemma}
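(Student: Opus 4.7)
The plan is to first establish a cleaner identity for $-\frac{L'}{L}(s,\pi)$ via a Mellin--Barnes contour integral, and then integrate in $s$ to recover the desired identity for $\log L(s,\pi)$. For $c > \max(0,1-\sigma)$, I would define
\[
I(s) = \frac{1}{2\pi i}\int_{(c)}\Big(-\frac{L'}{L}(s+z,\pi)\Big)\hat{u}(1+z\log X)\frac{dz}{z}.
\]
Since $\re(s+z) > 1$ on the contour, the Dirichlet series $-\frac{L'}{L}(s+z,\pi) = \sum_n a_\pi(n)\Lambda(n) n^{-s-z}$ converges absolutely, so one may interchange sum and integral. The inner integral $\frac{1}{2\pi i}\int_{(c)}\hat{u}(1+z\log X)n^{-z}\,dz/z$ can be evaluated via the substitution $w = z\log X$ together with the identity $\hat{v}(w) = \hat{u}(w+1)/w$ from \cref{lem:u1} and Mellin inversion, yielding exactly $v(n^{1/\log X})$. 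Therefore $I(s) = \sum_{n\geq 2}\frac{a_\pi(n)\Lambda(n)}{n^s}v(n^{1/\log X})$.

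Next, I would shift the contour leftward to $\re(z) = -c'$ for a small positive constant $c'$ (say $c' = \tfrac{1}{2m}+\tfrac{1}{\log X}$), chosen so that $\sigma + \re(z)$ stays in a region where $L'/L$ admits Hadamard-type bounds while $|z|\log X\gtrsim 1$ on the shifted line. The residues crossed are $z = 0$, contributing $-\frac{L'}{L}(s,\pi)\hat{u}(1) = -\frac{L'}{L}(s,\pi)$ (using $\int u = 1$), and $z = \rho - s$ for each nontrivial zero $\rho$ (with multiplicity), contributing $-\hat{u}(1+(\rho-s)\log X)/(\rho-s)$. I would then integrate the resulting identity in $s$ along a horizontal line from $s_0 = \sigma_0 + it$ down to $s$, and let $\sigma_0\to\infty$. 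A direct chain-rule computation from the definition of $U$ combined with $E_1'(w) = -e^{-w}/w$ gives $U'(z) = -\hat{u}(1-z)/z$, which provides the antiderivative relation $\frac{d}{ds}U((s-\rho)\log X) = \hat{u}(1+(\rho-s)\log X)/(\rho-s)$. Combined with $\int n^{-w}\,dw = -n^{-w}/\log n$ and the vanishing of $\log L(s_0,\pi)$, the smoothed Dirichlet-type sum, and each $U((s_0-\rho)\log X)$ as $\sigma_0\to\infty$, this yields the target identity up to an error $\mathcal{E}(s) = -\int_\sigma^\infty J(u+it)\,du$, where $J(\cdot)$ is the contour integral on the shifted line.

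Finally, I would bound $\mathcal{E}(s)$ by swapping the orders of integration. Since $\int_\sigma^\infty -\frac{L'}{L}(w+it+z,\pi)\,dw = \log L(s+z,\pi)$ (using that $\log L\to 0$ as $\re\to\infty$), the error reduces to
\[
\mathcal{E}(s) = \frac{1}{2\pi i}\int_{\re(z) = -c'}\log L(s+z,\pi)\hat{u}(1+z\log X)\frac{dz}{z}.
\]
The rapid decay $|\hat{u}(1+z\log X)|\ll e^{4k}(1+|1+z\log X|)^{-k}$ from \cref{lem:u1} (the $e^{\max(\sigma,0)}$ factor equals $1$ because $\re(1+z\log X)<0$), together with convexity bounds for $|\log L(s+z,\pi)|$ on $\re(s+z)\geq 1-1/m-1/\log X$, supplies the $(\log X)^{-k}$ saving after integration in $\Im(z)$.

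The main obstacle is producing the clean $O(e^{4k}/(\log X)^k)$ error bound, free of any $\log\CC$ factor: this requires balancing the choice of $c'$ against the bounds on $\log L$ on the shifted contour, together with a careful ordering argument (say by $|\Im(\rho - s)|$) to justify convergence of the residue sum $\sum_\rho \hat{u}(1+(\rho-s)\log X)/(\rho-s)$ so that the residue bookkeeping in the contour shift is unambiguous. A secondary point is to perturb $c'$ by $O(1/\log X)$ if needed so that the shifted line avoids any nontrivial zero of $L(s,\pi)$ as the contour is moved.
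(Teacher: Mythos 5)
Your overall strategy (express the smoothed Dirichlet sum as a Mellin--Barnes integral against $\hat u$, shift the contour to pick up residues from the pole at $w=0$ and the zeros, then integrate in $\sigma$ and use $U'(z)=-\hat u(1-z)/z$ to package the zero contributions into $U$) is exactly the approach of the paper. However, there are two linked gaps in the way you execute the contour shift.

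First, you shift only to the vertical line $\re(z)=-c'$ with $c'\approx\frac{1}{2m}$. Since $\sigma\geq 1-\frac{1}{2m}$, the residues you cross at $z=\rho-s$ are only those nontrivial zeros with $\beta>\sigma-c'\gtrsim 1-\frac{1}{m}$; zeros with small real part are not captured. The lemma's identity sums $U((s-\rho)\log X)$ over \emph{all} nontrivial zeros, so your residue sum is missing a (potentially unbounded) set of terms, and this discrepancy is silently folded into $\mathcal{E}(s)$ without being accounted for. Second, and more seriously, your attempted bound on $\mathcal{E}(s)=\frac{1}{2\pi i}\int_{\re(z)=-c'}\log L(s+z,\pi)\,\hat u(1+z\log X)\,\frac{dz}{z}$ requires control of $|\log L|$ on the fixed abscissa $\re(s+z)=\sigma-c'\approx 1-\frac{1}{m}-\frac{1}{\log X}$, which lies inside the critical strip. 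Convexity bounds control $\log|L|$ from above but give no pointwise lower bound on $|L|$ (nor control of $\arg L$), so $|\log L|$ can be arbitrarily large near zeros; perturbing $c'$ by $O(1/\log X)$ does not avoid this since it only avoids landing exactly on a zero, not being close to many. Even granting such a bound, the natural estimate from the convexity bound $\log|L|\ll\frac{1}{2m}\log\CC(it)$ introduces a $\log\CC$ factor, whereas the lemma's error $O(e^{4k}/(\log X)^k)$ has none; the $\CC$-dependence is deliberately deferred to \cref{lem:nontrivial} (where only the sum over \emph{distant} zeros is truncated), and \cref{lem:explicit1} must be $\CC$-free. The paper avoids both issues by pushing the contour all the way to $\re(w)=-\infty$: this collects residues at \emph{every} nontrivial zero (matching the statement), there is no remaining vertical contour integral of $\log L$ to bound, and the only error comes from the discrete sum over trivial zeros $w=-(s+2n+\mu_\pi(j))\log X$, which lie at real part $\leq-\frac{(\sigma+2n)\log X}{4m}$ by \eqref{eqn:weak_selberg} and the hypothesis $\sigma\geq 1-\frac{1}{2m}$, so \cref{lem:u1} kills them with no $\CC$-dependence. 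You should replace the finite shift plus $\log L$ bound with a shift to $-\infty$ and bound only the trivial-zero residues.
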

\begin{proof}
By Lemma \ref{lem:u1}, if $c>0$ is sufficiently large with respect to $s$ and $X$, then
\begin{align*}
\frac{1}{2\pi i}\int_{c-i\infty}^{c+i\infty}\frac{\hat{u}(w+1)}{w n^{w/\log X}}dw=v(e^{\frac{\log{n}}{\log X}}).
\end{align*}
By \eqref{eqn:logarithmic_derivative}, it follows that
\begin{equation}
\label{eqn:a(n)Mellin}
\sum_{n=1}^{\infty}\frac{a_{\pi}(n)\Lambda(n)}{n^{s}}v(e^{\frac{\log{n}}{\log X}})=\frac{1}{2\pi i}\int_{c-i\infty}^{c+i\infty}-\frac{L'}{L}\Big(s+\frac{w}{\log X},\pi\Big)\frac{\hat{u}(w+1)}{w}dw.
\end{equation}
Using the rapid decay of $\hat{u}$, we push the contour to the left and find that \eqref{eqn:a(n)Mellin} equals
\begin{align*}
-\frac{L'}{L}(s,\pi)+\sum_{\rho}\frac{\hat{u}(1-(s-\rho)\log X)}{s-\rho} +\sum_{j=1}^{m}\sum_{n=0}^{\infty}\frac{\hat{u}(1-(s+2n+\mu_{\pi}(j))\log X)}{s+2n+\mu_{\pi}(j)},
\end{align*}
where $\rho$ runs through the nontrivial zeros of $L(s,\pi)$ and the numbers $\mu_{\pi}(j)$ are as in \eqref{eqn:gamma_factor}.

We will now prove that if $\sigma\geq 1-\frac{1}{2m}$, then
\begin{equation}
\label{eqn:trivial_zero_bound}
\sum_{j=1}^{m}\sum_{n=0}^{\infty}\frac{\hat{u}(1-(2n+\mu_{\pi}(j)+s)\log X)}{2n+\mu_{\pi}(j)+s}\ll \frac{1}{(\log X)^{k}|s|^k}.
\end{equation}
It follows from \eqref{eqn:weak_selberg}, our range of $X$, and the constraint $\sigma\geq 1-\frac{1}{2m}$ that
\[
\Re(1-(2n+\mu_{\pi}(j)+s)\log X)\leq 1-\Big(2n-1+\frac{1}{m}+\sigma\Big)\log X\leq -\frac{(\sigma+2n)\log X}{4m}.
\]
The bound \eqref{eqn:trivial_zero_bound} now follows from \eqref{lem:u1}, \eqref{eq:U1}, and \cref{lem:Ularge}.  We have arrived at
\begin{equation}
\label{eqn:logderiv}
-\frac{L'}{L}(s,\pi)=\sum_{n=1}^{\infty}\frac{a_{\pi}(n)\Lambda(n)}{n^{s}}v(e^{\frac{\log{n}}{\log X}})-\sum_{\rho}\frac{\hat{u}(1-(s-\rho)\log X)}{s-\rho}+O\Big(\frac{e^{4k}}{(\log X)^{k}|s|^k}\Big).
\end{equation}
To finish, integrate \eqref{eqn:logderiv} along $\{z\in\mathbb{C}\colon \Re(z)\geq\sigma,~\im(z)=t\}$.
\end{proof}

\subsection{Proof of Proposition \ref{prop:HadamardEuler}}

To establish sufficient decay for the sum over $\rho$ in Lemma~\ref{lem:explicit1}, we require estimates for the local distribution of zeros of $L(s,\pi)$.

\begin{lemma}
\label{lem:rvm}
Let $L(s,\pi)\in\mathfrak{S}(m)$.  If $t\in\R$, then
\begin{align*}
\#\{ \rho=\beta+i\gamma  :  L(\rho,\pi)=0, \ 0\leq \beta \leq 1, \  t\leq \gamma \leq t+1  \} \ll \log \CC(it).
\end{align*}
\end{lemma}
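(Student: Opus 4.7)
My plan is to apply Jensen's formula to $L(s,\pi)$ on a disk of bounded radius centered at $s_0 = 2 + i(t+\tfrac{1}{2})$, chosen so that the rectangle $\{0 \leq \beta \leq 1,\ t \leq \gamma \leq t+1\}$ lies inside a slightly smaller concentric disk. First, I would establish the lower bound $|L(s_0,\pi)| \gg 1$: since $\re(s_0) = 2$ and $\theta_\pi \leq 1 - 1/m$ by hypothesis, the Euler product \eqref{eqn:Dirichlet_Euler} converges absolutely, and
\[
|\log L(s_0,\pi)| \;\leq\; \sum_{p}\sum_{k \geq 1}\sum_{j=1}^{m}\frac{p^{k(\theta_\pi - 2)}}{k} \;\ll_m\; 1,
\]
so $|L(s_0,\pi)|$ is bounded below by a positive constant depending only on $m$.

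Second, I would bound $|L(s,\pi)|$ uniformly on the circle $|s - s_0| = R$ for some fixed $R$ (e.g. $R = 3$). On the portion where $\re(s) \geq \tfrac{1}{2}$, the convexity bound \eqref{eqn:convexity_1} yields $|L(s,\pi)| \ll \CC(it)^{A}$ for an absolute constant $A$, using that $\CC(i\im s) \asymp \CC(it)$ when $|\im s - t|$ is bounded. On the remaining portion where $\re(s) < \tfrac{1}{2}$, I would invoke the functional equation \eqref{eqn:functional_equation} to write
\[
|L(s,\pi)| \;=\; |L(1 - \bar s,\pi)| \cdot \frac{|L(1-\bar s,\pi_\infty)|}{|L(s,\pi_\infty)|},
\]
apply the previous bound to $|L(1 - \bar s,\pi)|$ (where now $\re(1 - \bar s) > \tfrac{1}{2}$), and estimate the archimedean ratio by Stirling. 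The weak Selberg assumption $\re(\mu_\pi(j)) \geq -\theta_\pi$ keeps the gamma factors in the denominator away from their poles on the disk, and the definition \eqref{eqn:analytic_conductor} of $\CC(it)$ is precisely what is needed to render this archimedean ratio polynomially bounded in $\CC(it)$.

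Third, I would invoke Jensen's formula. Letting $N(r)$ denote the number of zeros of $L(s,\pi)$ in $|s - s_0| \leq r$,
\[
\int_0^R \frac{N(r)}{r}\, dr \;=\; \frac{1}{2\pi} \int_0^{2\pi} \log|L(s_0 + Re^{i\theta},\pi)|\, d\theta \;-\; \log|L(s_0,\pi)|.
\]
By the previous two steps, the right side is $O(\log \CC(it))$. Since $N$ is nondecreasing, $N(r_0)\log(R/r_0) \leq \int_{r_0}^R N(r)/r\, dr$, so choosing $r_0$ large enough that the rectangle $\{0 \leq \beta \leq 1,\ t \leq \gamma \leq t+1\}$ is contained in $|s - s_0| \leq r_0$ (say $r_0 = 5/2$) gives the stated bound.

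The main obstacle is the second step in the half-plane $\re(s) < \tfrac{1}{2}$: one must carefully quantify the effect of the functional equation and verify via Stirling that the archimedean ratio contributes at most a polynomial factor in $\CC(it)$, uniformly across the disk. Once that is in hand, the rest is the classical contour-integration argument.
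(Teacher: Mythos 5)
Your Jensen's formula approach is the classical one and is, in outline, what underlies the result the paper simply cites (\cite[Proposition 5.7]{IK}); the paper's own proof is a one-line reference to that proposition. The structure you propose (lower bound at the center $s_0$, upper bound on the circle via convexity for $\re(s)\ge \tfrac12$ and via the functional equation for $\re(s)<\tfrac12$, then Jensen) is sound in spirit, but your treatment of the archimedean factor in the half-plane $\re(s)<\tfrac12$ has a genuine gap.

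The claim that the weak Selberg bound keeps $L(s,\pi_\infty)$ in the denominator ``away from its poles on the disk'' is not correct: $\Gamma\big((s+\mu_\pi(j))/2\big)$ has a pole at $s=-\mu_\pi(j)$, and since $\re(-\mu_\pi(j))\le\theta_\pi\le 1-\tfrac1m$ this can perfectly well land inside $|s-s_0|\le 3$. That particular defect is actually harmless, because a pole of the denominator forces the ratio $|L(1-\bar s,\pi_\infty)|/|L(s,\pi_\infty)|$ to be small, not large. The real danger is the numerator: $L(1-\bar s,\pi_\infty)$ has a pole at $s=1+\overline{\mu_\pi(j)}$, whose real part is $1+\re\mu_\pi(j)\ge 1/m$, and once $m\ge 3$ this can lie in $\re(s)<\tfrac12$ and on (or arbitrarily near) your circle. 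At such points the archimedean ratio is not ``polynomially bounded in $\CC(it)$ uniformly across the disk'' as you assert. The pole of $L(1-\bar s,\pi_\infty)$ is of course cancelled by a trivial zero of $L(1-\bar s,\pi)$, but your strategy of bounding $|L(1-\bar s,\pi)|$ by convexity and the archimedean ratio by Stirling \emph{separately} cannot see that cancellation. Two standard fixes: (i) observe that the logarithm of the archimedean ratio has only logarithmic singularities, so the Jensen integral over the circle is still $O(\log\CC(it))$ even though the pointwise bound fails; or (ii) cleaner, run Jensen's formula for a normalized completed $L$-function so that the functional equation becomes a reflection with no archimedean ratio at all, and then account for the $O(m)$ discrepancy between zeros of $\Lambda$ and zeros of $L(s,\pi)$ in the box.

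A smaller issue: the displayed convexity bound \eqref{eqn:convexity_1} is in terms of $\CC$ and $(1+|\tau|)$, and $\CC^{1/4}(1+|\tau|)^{m/4}$ need not be bounded by any power of $\CC(it)$ (consider $\mu_\pi(j)=-it$ for all $j$, so that $\CC(it)\asymp q_\pi$ while $\CC\asymp q_\pi|t|^m$). What you actually want is the sharper intermediate form $|L(\tfrac12+i\tau,\pi)|\ll\CC(i\tau)^{1/4}$, which the paper's derivation of \eqref{eqn:convexity_0} establishes before deliberately weakening it via the triangle inequality; combined with $\CC(i\tau)\asymp_m\CC(it)$ for $|\tau-t|=O(1)$, that version does close your step.
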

\begin{proof}
This follows from \cite[Proposition 5.7]{IK}.
\end{proof}

\begin{lemma}
\label{lem:zeroc}
Let $L(s,\pi)\in\mathfrak{S}(m)$.  If $t$ and $r$ are real numbers such that
\begin{align*}
\frac{1}{\log \CC(it)}\leq r \leq \frac{3}{4},
\end{align*}
then $\#\{\rho\colon L(\rho,\pi)=0,~|\rho-(1+it)|\leq r \}\ll r\log \CC(it)$.
\end{lemma}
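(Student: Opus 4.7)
The plan is to adapt Linnik's classical argument: combine the partial-fraction decomposition arising from the Hadamard factorisation of the completed $L$-function $\Lambda(s,\pi) = L(s,\pi) L(s,\pi_\infty)$ with the positivity of $\re\frac{1}{s-\rho}$ when $\re s > \re\rho$.

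The starting point is the identity
\[
\sum_\rho \re\frac{1}{s-\rho} = \re\frac{L'}{L}(s,\pi) + \re\frac{L'}{L}(s,\pi_\infty),
\]
valid for $s$ with $1 < \re s \leq 2$ and $L(s,\pi) \neq 0$. This follows from the Hadamard factorisation of $\Lambda(s,\pi)$ together with the cancellation $\re B_\pi + \sum_\rho \re(1/\rho) = 0$, a standard consequence of \eqref{eqn:functional_equation}; the derivation is exactly as in \cite[\S5.3]{IK} and already underlies \cref{lem:rvm}. Specialising to $s = 1+r+it$, Stirling applied to the $\Gamma'/\Gamma$ terms in \eqref{eqn:gamma_factor} bounds the infinite part by $O(\log\CC(it))$. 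For the finite part, \eqref{eqn:logarithmic_derivative} combined with Cauchy--Schwarz, the bound $\sum_n \Lambda(n)/n^{1+r} \ll 1/r$, and a dyadic application of \eqref{eqn:weak_ramanujan_2} to control $\sum_n|a_\pi(n)|^2\Lambda(n)/n^{1+r}$ yields $|\frac{L'}{L}(1+r+it,\pi)| \ll 1/r$. Putting these together,
\[
\sum_\rho \re\frac{1}{(1+r+it)-\rho} \ll \log\CC(it) + \frac{1}{r}.
\]

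For the matching lower bound I would invoke positivity: every nontrivial zero $\rho = \beta+i\gamma$ satisfies $0\leq\beta\leq 1$, so each term in the sum is nonnegative, and any $\rho$ with $|\rho-(1+it)|\leq r$ contributes at least
\[
\re\frac{1}{(1+r+it)-\rho} = \frac{1+r-\beta}{|(1+r+it)-\rho|^2} \geq \frac{r}{(2r)^2} = \frac{1}{4r}.
\]
Writing $N$ for the number of zeros in the target disk, the preceding estimates give $N/(4r) \ll \log\CC(it) + 1/r$, hence $N \ll 1 + r\log\CC(it)$. The hypothesis $r \geq 1/\log\CC(it)$ absorbs the constant term and yields $N \ll r\log\CC(it)$, as claimed.

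The only technical point is the cancellation of $B_\pi$ and $\sum_\rho\re(1/\rho)$ that produces the clean identity above: the series is only conditionally convergent, and the pairing $\rho\leftrightarrow 1-\bar\rho$ supplied by \eqref{eqn:functional_equation} is needed to see that the symmetric limit vanishes. Since this derivation is already used (implicitly) in the proof of \cref{lem:rvm}, it may be quoted directly from \cite[Prop.~5.7]{IK} rather than reproduced.
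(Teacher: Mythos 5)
Your proposal is correct and complete. The paper itself dispatches this lemma with a one-line citation to Soundararajan--Thorner, and your argument is exactly the classical Linnik-type proof that underlies that reference: the Hadamard partial-fraction identity $\sum_\rho \re\frac{1}{s-\rho} = \re\frac{L'}{L}(s,\pi) + \re\frac{L_\infty'}{L_\infty}(s,\pi)$ at $s=1+r+it$, the $O(\log\CC(it))$ bound for the archimedean term via Stirling, the $O(1/r)$ bound for the finite term via Cauchy--Schwarz and the averaged Ramanujan hypothesis \eqref{eqn:weak_ramanujan}, and then positivity of each term $\re\frac{1}{s-\rho}$ together with the pointwise lower bound $\geq \frac{1}{4r}$ for zeros in the disk. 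The Cauchy--Schwarz step is the only place where you must be slightly careful to use \eqref{eqn:weak_ramanujan_2} dyadically rather than a pointwise bound on $a_\pi(n)$, and you handle that correctly. The deferral of the $\re B_\pi + \sum_\rho \re(1/\rho)=0$ cancellation to the standard reference is appropriate. In short: same approach, written out in full where the paper merely cites.
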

\begin{proof}
This follows from \cite[Lemma 3.1]{ST}.
\end{proof}

\begin{lemma}
\label{lem:nontrivial}
If $k\geq 1$ is an integer, $K> 0$, $X\geq e^{3m^4}$, and $s=\sigma+it$, then
\begin{align*}
\sum_{\substack{\rho \\ |s-\rho|\geq K/\log X}}U((s-\rho)\log X)\ll \frac{e^{4k}\log \CC(it)}{K^{k-2}\log X},\qquad |\Re{(s-1)}|\ll \frac{1}{\log X}.
\end{align*}
\end{lemma}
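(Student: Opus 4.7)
The plan is to combine the pointwise decay bound of \cref{lem:Ularge} with a dyadic decomposition of the sum over zeros, using the zero-counting estimates of \cref{lem:rvm,lem:zeroc} to control each dyadic shell.

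The first step is to verify that, for any nontrivial zero $\rho=\beta+i\gamma$ of $L(s,\pi)$ with $|s-\rho|\geq K/\log X$ (and $K$ exceeding a sufficiently large absolute constant), the quantity $(\sigma-\beta)+|t-\gamma|$ is comparable to $|s-\rho|$. Since $\beta\in[0,1]$ and $|\sigma-1|\ll 1/\log X$, we have $\sigma-\beta\geq -C_0/\log X$ for some absolute $C_0$. Splitting the identity $|s-\rho|^2=(\sigma-\beta)^2+(t-\gamma)^2$ into the two cases $|\sigma-\beta|\geq|s-\rho|/\sqrt{2}$ and $|t-\gamma|\geq|s-\rho|/\sqrt{2}$: in the first case one must have $\sigma\geq\beta$ (otherwise $|\sigma-\beta|\leq C_0/\log X<|s-\rho|/\sqrt{2}$, a contradiction once $K\gg C_0$), and in the second case the possibly negative contribution of $\sigma-\beta$ is dominated by $|t-\gamma|$. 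Either way, \cref{lem:Ularge} applied with exponent $k$ yields
\[
U((s-\rho)\log X)\ll\frac{e^{4k}}{|s-\rho|^k(\log X)^k}.
\]

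Next is the dyadic decomposition. Setting $R_j:=2^jK/\log X$ for $j\geq 0$,
\[
\sum_{|s-\rho|\geq K/\log X}U((s-\rho)\log X)\ll\sum_{j\geq 0}\frac{e^{4k}}{R_j^k(\log X)^k}\#\{\rho:|s-\rho|\leq 2R_j\}.
\]
For $R_j$ in the admissible range of \cref{lem:zeroc}, the triangle inequality $|\rho-(1+it)|\leq|s-\rho|+O(1/\log X)$ makes that lemma directly applicable and yields $\#\{\rho:|s-\rho|\leq 2R_j\}\ll R_j\log\CC(it)$. For larger $R_j$, one covers $\{\gamma:|t-\gamma|\leq 2R_j\}$ by unit intervals, applies \cref{lem:rvm} on each, and invokes the elementary estimate $\log\CC(i(t+u))\leq\log\CC(it)+m\log(1+|u|)$ which follows at once from \eqref{eqn:analytic_conductor}. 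In both regimes the count is $\ll R_j\log\CC(it)(1+\log(1+R_j))$.

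Summing over $j$ then gives
\[
\sum_{|s-\rho|\geq K/\log X}U((s-\rho)\log X)\ll\frac{e^{4k}\log\CC(it)}{K^{k-1}\log X}\sum_{j\geq 0}\frac{1+j}{2^{j(k-1)}},
\]
where the $j$-series is $O(1)$ for $k\geq 2$. For $K\geq 1$ this is already stronger than the stated $e^{4k}\log\CC(it)/(K^{k-2}\log X)$, and the residual case $k=1$ follows by applying the same argument with exponent $2$ in \cref{lem:Ularge}. The principal technical point is the comparability claim in the first step, which forces $K$ above an absolute constant; for smaller $K$ the discrepancy is absorbed into the implicit constant using the monotonicity of the target bound in $K$.
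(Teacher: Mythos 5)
Your proof follows the paper's route in its essential features: the pointwise decay of \cref{lem:Ularge}, a dyadic decomposition, and the counting estimates \cref{lem:rvm,lem:zeroc}. The organizational difference is that the paper first separates zeros with $|t-\gamma|\geq 1$ (summing over integer translates and applying \cref{lem:rvm} directly) from zeros with $|t-\gamma|\leq 1$ (handled dyadically in $|s-\rho|$ via \cref{lem:zeroc}), while you run a single dyadic decomposition in $|s-\rho|$ throughout and switch between the two counting lemmas according to the shell size. This is conceptually tidier, at the price of needing the explicit conductor shift $\log\CC(i(t+u))\leq\log\CC(it)+m\log(1+|u|)$ to handle large shells, but it delivers the same bound. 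One genuine imprecision: the fallback for $K$ below your absolute constant $K_0$ does not work as stated. Decreasing $K$ \emph{enlarges} the set of zeros in the sum, so the monotonicity of the target bound in $K$ says nothing about the newly admitted zeros at distance in $[K/\log X,\,K_0/\log X]$ from $s$; those require a separate short estimate (there are $O(\log\CC(it)/\log X)$ of them by \cref{lem:zeroc}, and $U$ grows only logarithmically near its singularity, so their contribution is easily dominated). Finally, the assertion that the $j$-series is $O(1)$ quietly uses $1+\log(1+R_j)\ll 1+j$, which fails when $K>\log X$ and leaves an extra factor $\ll 1+\log_+(K/\log X)$; this is harmlessly absorbed into $K^{k-1}\mapsto K^{k-2}$ once $K\geq 1$, but should be said.
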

\begin{proof}
If $\rho$ is a nontrivial zero such that $|s-\rho|\geq K/\log X$, then
\begin{align*}
 |\sigma-\beta| \geq \frac{K}{\log X} \qquad \text{or} \qquad |t-\gamma|\geq \frac{K}{\log X}.
\end{align*}
By Lemma \ref{lem:Ularge}, it follows that
\begin{equation}
\label{eqn:Ubound_near_1}
U((s-\rho)\log X)\ll \frac{e^{4k}}{(K+|t-\gamma|(\log X))^{k}}.
\end{equation}
Together, \cref{lem:rvm} and \eqref{eqn:Ubound_near_1} imply that
\begin{align*}
\sum_{\substack{\rho \\ |s-\rho|\geq K/\log X \\ |t-\gamma|\geq 1}}U((s-\rho)\log X)&\ll  e^{4k}\sum_{\substack{j\in \Z \\ j\neq 0}}\frac{\#\{ \rho\colon |s-\rho|\geq K/\log X,~t+j\leq \gamma < t+j+1\}}{(K+|j|(\log X))^{k}}\\
&\ll e^{4k}\sum_{\substack{j\in \Z \\ j\neq 0}}\frac{\log\CC(it)+\log(|j|+2)}{(K+|j|(\log X))^{k}}\\
&\ll e^{4k}\frac{\log\CC(it)+K+\log X}{(K+\log X)^{k-1}},
\end{align*}
which leads us to
\begin{equation}
\label{eq:Ubounds1}
\begin{aligned}
\sum_{\substack{\rho \\ |s-\rho|\geq K/\log X}}U((s-\rho)\log X)= \sum_{\substack{\rho \\ |s-\rho|\geq K/\log X \\ |t-\gamma|\leq 1}}U((s-\rho)\log X) +O\Big(\frac{e^{4k}\log\CC(it)}{(K+\log X)^{k-2}}\Big).\hspace{-3mm}
\end{aligned}
\end{equation}
We partition the sum on the right hand side of \eqref{eq:Ubounds1} as follows:
\begin{align}
\label{eq:S1s1s1}
\sum_{\substack{\rho \\ |s-\rho|\geq K/\log X \\ |t-\gamma|\leq 1}}U((s-\rho)\log X)=\sum_{j=1}^{\infty}S_j,\qquad S_j=\sum_{\substack{\rho \\ \frac{2^{j-1}K}{\log X}\leq |s-\rho|< \frac{2^{j}K}{\log X} \\ |t-\gamma|\leq 1}}U((s-\rho)\log X).\hspace{-1mm}
\end{align}
By \cref{lem:Ularge,lem:zeroc}, we have that
\begin{align*}
S_j\ll e^{4k}\frac{\#\{\rho\colon |s-\rho|\leq 2^{j}K/\log X \}}{(2^{j}K)^{k+1}}\ll \frac{\log \CC(it)}{\log X}\frac{e^{4k}}{(2^{j}K)^{k}}.
\end{align*}
We insert this estimate into \eqref{eq:S1s1s1}, thus obtaining
\begin{align}
\label{eq:Ubounds2}
\sum_{\substack{\rho \\ |s-\rho|\geq K/\log X \\ |t-\gamma|\leq 1}}U((s-\rho)\log X)\ll e^{4k}\frac{\log \CC(it)}{(\log X)K^{k}}.
\end{align}
The lemma follows from \eqref{eq:Ubounds1} and \eqref{eq:Ubounds2}.
\end{proof}

\begin{proof}[Proof of \cref{prop:HadamardEuler}]
This follows from  \cref{lem:explicit1,lem:nontrivial}.  We have assumed that $k\geq 3$ is an integer, but by replacing $k$ with $\lfloor k\rfloor+1$, we see that \cref{prop:HadamardEuler} is true for any $k\geq 3$.
\end{proof}

\section{Proof of Theorem \ref{thm:main}}
\label{sec:main_proof}

Let $L(s,\pi)\in\mathfrak{S}(m)$, and recall the notation and hypotheses of \cref{prop:zerosfromabove,prop:HadamardEuler}.  By \cref{prop:zerosfromabove}, if \eqref{eq:A-y} holds, then
\begin{align}
\label{eq:sss1-9}
\frac{\lambda y_0}{4}\leq \log\Big|\frac{L(1-\lambda+i(\phi+\xi),\pi)}{L(1+\lambda+i(\phi+\xi),\pi)}\Big|,\qquad \textup{where}\quad |\xi|\leq \lambda( y_0^{-1}\kappa\log{\CC})^{\delta}.
\end{align}
Let $\Cl[abcon]{small}=\Cr{small}(\kappa,A_0,m,\delta)\in(0,1)$ be a sufficiently small constant, and let
\begin{align}
\label{eq:Xconds}
X = e^{\Cr{small}/\lambda},\qquad y\in[-\lambda,\lambda].
\end{align}
We will apply \cref{prop:HadamardEuler} to $\log L(1\pm \lambda+i(\phi+\xi),\pi)$; we can apply \cref{prop:HadamardEuler} to $L(1-\lambda+i(\phi+\xi),\pi)$ because of \eqref{eqn:nonvanish}.  To begin, let $k$ and $B$ be large integers depending at most on $\kappa$, $A_0$, $\delta$ and $m$; they are defined in \eqref{eq:11} below. We apply \cref{prop:HadamardEuler} with 
\begin{align}
\label{eqn:Kmain1+lamdalb}
K=\Big(\frac{B\kappa \log{\CC}}{\lambda y_0\log X}\Big)^{1/(k-2)}.
\end{align}
It follows that if
\begin{align}
\label{eq:CC-1-3993}
s=\sigma+it\in \mathbb{C},\qquad |\Re(s-1)|\ll \frac{1}{\log{X}},
\end{align}
then
\begin{align}
\label{eq:Lsp1}
\log{L(s,\pi)}&=\sum_{n=2}^{\infty}\frac{a_{\pi}(n)\Lambda(n)}{n^{s}\log n}v(e^{\frac{\log{n}}{\log X}})-\sum_{|s-\rho|\leq K/\log X}U((s-\rho)\log X) +O\Big(\frac{\lambda y_0 e^{4k}}{B}\Big).\hspace{-2.9mm}
\end{align}

We will apply~\eqref{eq:Lsp1} with $s=1+\lambda+i(\phi+\xi)$ and $s=1-\lambda+i(\phi+\xi)$ then subtract the resulting estimates.  Note that the choice of parameters~\eqref{eq:Xconds} implies that the condition~\eqref{eq:CC-1-3993} is satisfied. Therefore, if $F_1$ and $F_2$ are defined by
\begin{align*}
F_1(y)&:=\sum_{n=1}^{\infty}\frac{a_{\pi}(n)\Lambda(n)}{n^{1+y+i(\phi+\xi)}\log n}v(e^{\frac{\log{n}}{\log X}}),\\
F_2(y)&:=-\sum_{|1+i(\phi+\xi)-\rho|\leq K/\log X}U((1+y+i(\phi+\xi)-\rho)\log X),
\end{align*}
then
\begin{align}
\label{eq:L111}
\log\Big|\frac{L(1-\lambda+i(\phi+\xi),\pi)}{L(1+\lambda+i(\phi+\xi),\pi)}\Big| =  \mathrm{Re}(F_1(-\lambda)-F_1(\lambda)+F_2(-\lambda)-F_2(\lambda)) +O\Big(\frac{\lambda y_0 e^{4k}}{B}\Big).\hspace{-3.8mm}
\end{align}
By the mean value theorem, there exists $y_1\in[-\lambda,\lambda]$ such that 
\[
F_1(-\lambda)-F_1(\lambda)=-2\lambda F^{'}_1(y_1).
\]
By our choices of $u$ and $v$ (see \eqref{eq:vdef}), we have that
\begin{align*}
F_1'(y_1)=-\sum_{n=1}^{\infty}\frac{a_{\pi}(n)\Lambda(n)}{n^{1+y_1+i(\xi+\phi)}}v(e^{\frac{\log n}{\log X}})\ll \sum_{n=1}^{\infty}\frac{|a_{\pi}(n)|\Lambda(n)}{n^{1-\lambda}}v(e^{\frac{\log{n}}{\log X}})\ll \sum_{n\leq X}\frac{|a_{\pi}(n)|\Lambda(n)}{n^{1-\lambda}}.
\end{align*}
Our choice of $X$ in \eqref{eq:Xconds} and the Cauchy--Schwarz inequality ensure that
\begin{align*}
F_1'(y_1)\ll \sum_{n\leq X}\frac{|a_{\pi}(n)|\Lambda(n)}{n}\ll \Big(\sum_{n\leq X}\frac{|a_{\pi}(n)|^2\Lambda(n)}{n}\Big)^{\frac{1}{2}}\Big(\sum_{n\leq X}\frac{\Lambda(n)}{n}\Big)^{\frac{1}{2}}.
\end{align*}
It then follows from \eqref{eqn:weak_ramanujan}, dyadic decomposition, and Mertens's theorem that
\[
F_1(\lambda)-F_1(-\lambda)=2\lambda F_1'(y_1)\ll \lambda \log X \ll 1.
\]
 Thus,~\eqref{eq:L111} reduces to 
\begin{align*}
\log\Big|\frac{L(1-\lambda+i(\phi+\xi),\pi)}{L(1+\lambda+i(\phi+\xi),\pi)}\Big| &=  \mathrm{Re}(F_2(-\lambda)-F_2(\lambda))+O\Big(\frac{\lambda y_0 e^{4k}}{B}+1\Big).\hspace{-2mm}
\end{align*}
It follows from~\eqref{eq:sss1-9} that there exists a constant $\Cl[abcon]{B_scale}=\Cr{B_scale}(A_0,\kappa,m,\delta)\geq 1$ such that
\[
\frac{\lambda y_0}{8}\le   \mathrm{Re}(F_2(-y)-F_2(y))+\Cr{B_scale}\frac{\lambda y_0 e^{4k}}{B}.\hspace{-2mm}
\]
Consequently, there exists a constant $\Cl[abcon]{B_const}=\Cr{B_const}(A_0,\kappa,m,\delta)\geq 1$ such that if
\begin{align}
\label{eq:11}
k=\delta^{-1}+2, \quad B=\Cr{B_const} e^{4k},
\end{align}
then
 \begin{align}
\label{eq:L113}
\frac{\lambda y_0}{16}&\le   \mathrm{Re}(F_2(-\lambda)-F_2(\lambda))\hspace{-2mm}
\end{align}

Next, we estimate the right-hand side of \eqref{eq:L113}.  If $|y|\leq \lambda$, then by~\eqref{eq:U1} and the fundamental theorem of calculus, we find that
\begin{align*}
\frac{\partial}{\partial y}F_2(y)&=-\sum_{|1+i(\phi+\xi)-\rho|\leq K/\log X}\frac{\partial}{\partial y}U((1+y+i(\phi+\xi)-\rho)\log X)\\
&=\sum_{|1+i(\phi+\xi)-\rho|\leq K/\log X}\frac{\hat{u}(1-(1+y+i(\phi+\xi)-\rho)\log X)}{1+y+i(\phi+\xi)-\rho}.
\end{align*}
It follows that
\begin{align}
\label{eqn:F2_int}
F_2(-\lambda)-F_2(\lambda)=\sum_{|1+i(\phi+\xi)-\rho|\leq K/\log X}-\int_{-\lambda}^{\lambda}\frac{\hat{u}(1-(1+y+i(\phi+\xi)-\rho)\log X)}{1+y+i(\phi+\xi)-\rho}dy.\hspace{-1.4mm}
\end{align}
Combining~\eqref{eq:L113} with \eqref{eqn:F2_int}, we arrive at 
\begin{align}
\label{eqn:zerosum_F2}
\frac{\lambda y_0}{16}\le \int_{-\lambda}^{\lambda}\sum_{|1+i(\phi+\xi)-\rho|\leq K/\log X}-\Re{\Big(\frac{\hat{u}(1-(1+y+i(\phi+\xi)-\rho)\log X)}{1+y+i(\phi+\xi)-\rho}\Big)}dy.
\end{align}
Defining
\begin{align*}
S_1(y):=\sum_{\substack{|1+i(\phi+\xi)-\rho|\leq K/\log X \\ |1+y+i(\phi+\xi)-\rho|< 1/\log X}}-\Re{\Big(\frac{\hat{u}(1-(1+y+i(\phi+\xi)-\rho)\log X)}{1+y+i(\phi+\xi)-\rho}\Big)}dy
\end{align*}
and
\[
S_2(y):=\sum_{\substack{|1+i(\phi+\xi)-\rho|\leq K/\log X \\ |1+y+i(\phi+\xi)-\rho|\geq 1/\log X}}-\Re{\Big(\frac{\hat{u}(1-(1+y+i(\phi+\xi)-\rho)\log X)}{1+y+i(\phi+\xi)-\rho}\Big)}dy,
\]
we restate \eqref{eqn:zerosum_F2} as
\begin{align}
\label{eq:S12def}
\frac{\lambda y_0}{16}\le \int_{-\lambda}^{\lambda}S_1(y)dy+\int_{-\lambda}^{\lambda}S_2(y)dy.
\end{align}

First, consider $S_1(y)$. If $|1+y+i(\phi+\xi)-\rho|\leq 1/\log X$, then it follows from the expansion $\hat{u}(1-(1+y+i(\phi+\xi)-\rho)\log X)=1+O(|1+y+i(\phi+\xi)-\rho|\log X)$ that
\begin{align*}
S_1(y)=-\sum_{\substack{|1+i(\phi+\xi)-\rho|\leq K/\log X \\ |1+y+i(\phi+\xi)-\rho|< 1/\log X}}\frac{1+y-\beta}{|1+y+i(\phi+\xi)-\rho|^2}+O\Big(\log X\sum_{\substack{|1+i(\phi+\xi)-\rho|\leq K/\log X \\ |1+y+i(\phi+\xi)-\rho|<1/\log X}}1\Big).
\end{align*}
Using the change of variables $u = 1+y-\beta$, we now calculate
\begin{align*}
\nonumber \int_{-\lambda}^{\lambda}S_1(y)dy&=\sum_{\substack{\rho \\ |1+i(\phi+\xi)-\rho|\leq K/\log X}}\int_{\substack{|1+y+i(\phi+\xi)-\rho|<1/\log X \\ -\lambda\leq y \leq \lambda}}-\frac{1+y-\beta}{|1+y+i(\phi+\xi)-\rho|^2}dy \\
&+O\Big(\lambda(\log X)\sup_{-\lambda\leq y\leq \lambda}\sum_{\substack{|1+i(\phi+\xi)-\rho|\leq K/\log X \\ |1+y+i(\phi+\xi)-\rho|<1/\log X}}1 \Big)\\
&=\sum_{\substack{\rho \\ |1+i(\phi+\xi)-\rho|\leq K/\log X}}\int_{\substack{u^2+(\phi+\xi-\gamma)^2<1/(\log X)^2 \\ -\lambda+1-\beta\leq u \leq \lambda+1-\beta}}-\frac{u}{u^2+(\phi+\xi-\gamma)^2}du \\
&+O\Big(\#\Big \{ \rho\colon |1+i(\phi+\xi)-\rho|\leq \frac{K}{\log X}\Big\}\Big),
\end{align*}
where the last line uses \eqref{eq:Xconds}.  For the $u$-integral, there exist $a,b\in\R$ with $a<b$ such that
\begin{equation}
\label{eqn:int_prep}
\int_{\substack{u^2+(\phi+\xi-\gamma)^2<1/(\log X)^2 \\ -\lambda+1-\beta\leq u \leq \lambda+1-\beta}}-\frac{u}{u^2+(\phi+\xi-\gamma)^2}du=\int_{a}^{b}-\frac{u}{u^2+(\phi+\xi-\gamma)^2}du.
\end{equation}
Since $0\leq\beta\leq 1$, our range of $\lambda$ in \eqref{eq:A-y} ensures that $|\lambda-1+\beta|\leq \lambda+1-\beta$.  This implies that $|a|<b$.  If $a\geq 0$, then the integrand is nonpositive on $[a,b]$ and the $u$-integral is nonpositive.  If $a<0$, then we use the fact that the integrand is an odd function of $u$ to conclude that if $0<\epsilon<|a|$, then \eqref{eqn:int_prep} equals
\[
\int_{[a,b]-(-\epsilon,\epsilon)}-\frac{u}{u^2+(\phi+\xi-\gamma)^2}du=\frac{1}{2}\log\frac{a^2+(\phi+\xi-\gamma)^2}{b^2+(\phi+\xi-\gamma)^2}\leq 0.
\]
We conclude that
\begin{align}
\label{eq:s1b}
\int_{-\lambda}^{\lambda}S_1(y)dy\ll \#\Big \{ \rho\colon |1+i(\phi+\xi)-\rho|\leq \frac{K}{\log X}\Big\}.
\end{align}

Now, consider $S_2(y)$. We have that 
\begin{align*}
S_2(y)\ll (\log X)\sum_{\substack{|1+i(\phi+\xi)-\rho|\leq K/\log X}}|\hat{u}(1-(1+y+i(\phi+\xi)-\rho)\log X)|,
\end{align*}
hence
\begin{align*}
\int_{-\lambda}^{\lambda}S_2(y)\ll \lambda(\log{X})\sum_{\substack{|1+i(\phi+\xi)-\rho|\leq K/\log X}}\sup_{-\lambda\leq y\leq \lambda}|\hat{u}(1-(1+y+i(\phi+\xi)-\rho)\log X)|.
\end{align*}
Recall that $u$ has support contained in $[1,e]$.  Applying Lemma~\ref{lem:u1} and using~\eqref{eq:Xconds}, we find that if $|y|\le \lambda$, then
\begin{align*}
|\hat{u}(1-(1+y+i(\phi+\xi)-\rho)\log X)|\le |\hat{u}(1-(1+y-\beta)\log X)| \le \int_{1}^{e}\frac{u(t)}{t^{y\log{X}}}dt\ll 1.
\end{align*}
It now follows from \eqref{eq:Xconds} that
\begin{equation}
\label{eq:s2b}
\int_{-\lambda}^{\lambda}S_2(y)dy\ll  \#\Big\{ \rho\colon |1+i(\phi+\xi)-\rho|\leq \frac{K}{\log X}\Big\}.
\end{equation}

Combining \eqref{eq:S12def}, \eqref{eq:s1b}, and \eqref{eq:s2b}, we conclude that
\begin{align*}
 \#\Big\{ \rho\colon |1+i(\phi+\xi)-\rho|\leq \frac{K}{\log X}\Big\}\gg \lambda y_0.
\end{align*}
Therefore, by~\eqref{eq:Xconds}, \eqref{eqn:Kmain1+lamdalb}, and \eqref{eq:11}, we have obtained
\begin{align*}
\#\Big \{ \rho\colon |1+i(\phi+\xi)-\rho|\leq \frac{e^4}{\Cr{small}}\lambda \Big(\frac{e^8\Cr{B_const}}{\Cr{small}}\cdot \frac{ \kappa \log{\CC}}{y_0}\Big)^{\delta} \Big\}\gg \lambda y_0.
\end{align*}
Since $\Cr{small}\in(0,1)$, $\Cr{B_const}\geq 1$, and $\delta\in(0,\frac{1}{2}]$, we conclude (via the bound for $\xi$ in \eqref{eq:sss1-9} and the triangle inequality) the weaker estimate
\begin{equation}
\label{eqn:main_inequality}
\#\Big \{ \rho\colon |1+i\phi-\rho|\leq \frac{2e^4}{\Cr{small}}\lambda \Big(\frac{e^8\Cr{B_const}}{\Cr{small}}\cdot \frac{ \kappa \log{\CC}}{y_0}\Big)^{\delta} \Big\}\gg \lambda y_0.
\end{equation}

\begin{proof}[Proof of \cref{thm:main}]
Let $y_0$ satisfy \eqref{eqn:ranges_3.3} and $\lambda$ satisfy \eqref{eq:A-y}.  In the general case, we deduce \cref{thm:main} from \eqref{eqn:main_inequality} by choosing
\[
x=e^{y_0},\qquad L = \frac{4e^4}{\Cr{small}}\Big(\frac{e^8\Cr{B_const}\kappa}{\Cr{small}}\Big)^{\delta}\lambda y_0.
\]
When the numbers $a_{\pi}(n)$ are real, \cref{lem:twist} states that $|\phi|\leq \Cr{phi_real} y_0^{-1}N^{6/\kappa}$.  If we impose the additional restriction that
\[
\lambda\geq \frac{\Cr{small}\Cr{phi_real}}{2e^4}\cdot \frac{ N^{6/\kappa}}{y_0}\Big(\frac{e^8\Cr{B_const}}{\Cr{small}}\cdot \frac{\kappa \log{\CC}}{y_0}\Big)^{-\delta},
\]
then we can weaken \eqref{eqn:main_inequality} to
\begin{equation}
\label{eqn:main_inequality2}
\#\Big \{ \rho\colon |1-\rho|\leq \frac{4e^4}{\Cr{small}}\lambda \Big(\frac{e^8\Cr{B_const}}{\Cr{small}}\cdot \frac{ \kappa \log{\CC}}{y_0}\Big)^{\delta} \Big\}\gg \lambda y_0.
\end{equation}
We now choose
\[
x=e^{y_0},\qquad L = \frac{8e^4}{\Cr{small}}\Big(\frac{e^8\Cr{B_const}\kappa}{\Cr{small}}\Big)^{\delta}\lambda y_0.\qedhere
\]
\end{proof}
\begin{proof}[Proof of \cref{cor:main}]
Apply \cref{thm:main} with $m=\kappa=1$ and $A_0\ll 1$.  Also, there exists an absolute constant $\Cl[abcon]{N_bound}>0$ such that if $\chi$ has order $k\geq 2$, then $|\phi|\leq y_0^{-1}(\Cr{N_bound}N)^{2k^2}$ (see \cite[Lemma 3.3]{GS_JEMS}).
\end{proof}
\begin{proof}[Proof of \cref{cor:main_2}]
This follows from \cref{thm:main} with $m=\kappa=2$ and $A_0\ll1$.  For these cusp forms, the Fourier coefficients $\lambda_f(n)$ (hence $a_f(n)$) are real.
\end{proof}

\bibliographystyle{abbrv}
\bibliography{KerrKlurmanThorner_large_sums.bib}

\def\cprime{$'$}
\begin{thebibliography}{10}

\bibitem{BombieriHejhal}
E.~Bombieri and D.~A. Hejhal.
\newblock On the distribution of zeros of linear combinations of {E}uler
  products.
\newblock {\em Duke Math. J.}, 80(3):821--862, 1995.

\bibitem{BuiKeating}
H.~M. Bui and J.~P. Keating.
\newblock On the mean values of {$L$}-functions in orthogonal and symplectic
  families.
\newblock {\em Proc. Lond. Math. Soc. (3)}, 96(2):335--366, 2008.

\bibitem{FarmerGonekHughes}
D.~W. Farmer, S.~M. Gonek, and C.~P. Hughes.
\newblock The maximum size of {$L$}-functions.
\newblock {\em J. Reine Angew. Math.}, 609:215--236, 2007.

\bibitem{FGGHT}
C.~{Frechette}, M.~{Gerbelli-Gauthier}, A.~{Hamieh}, and N.~{Tanabe}.
\newblock {Large sums of Fourier coefficients of cusp forms}.
\newblock {\em arXiv e-prints}, page arXiv:2308.06311, Aug. 2023.

\bibitem{GonekHughesKeating}
S.~M. Gonek, C.~P. Hughes, and J.~P. Keating.
\newblock A hybrid {E}uler-{H}adamard product for the {R}iemann zeta function.
\newblock {\em Duke Math. J.}, 136(3):507--549, 2007.

\bibitem{GHS}
A.~Granville, A.~J. Harper, and K.~Soundararajan.
\newblock A new proof of {H}al\'{a}sz's theorem, and its consequences.
\newblock {\em Compos. Math.}, 155(1):126--163, 2019.

\bibitem{GS_JAMS}
A.~Granville and K.~Soundararajan.
\newblock Large character sums.
\newblock {\em J. Amer. Math. Soc.}, 14(2):365--397, 2001.

\bibitem{GS_decay}
A.~Granville and K.~Soundararajan.
\newblock Decay of mean values of multiplicative functions.
\newblock {\em Canad. J. Math.}, 55(6):1191--1230, 2003.

\bibitem{GS_JEMS}
A.~Granville and K.~Soundararajan.
\newblock Large character sums: {B}urgess's theorem and zeros of
  {$L$}-functions.
\newblock {\em J. Eur. Math. Soc. (JEMS)}, 20(1):1--14, 2018.

\bibitem{IK}
H.~Iwaniec and E.~Kowalski.
\newblock {\em Analytic number theory}, volume~53 of {\em American Mathematical
  Society Colloquium Publications}.
\newblock American Mathematical Society, Providence, RI, 2004.

\bibitem{Lamzouri}
Y.~Lamzouri.
\newblock Large sums of {H}ecke eigenvalues of holomorphic cusp forms.
\newblock {\em Forum Math.}, 31(2):403--417, 2019.

\bibitem{Mang}
A.~P. Mangerel.
\newblock Divisor-bounded multiplicative functions in short intervals.
\newblock {\em Res. Math. Sci.}, 10(1):Paper No. 12, 47, 2023.

\bibitem{Sound_weak}
K.~Soundararajan.
\newblock Weak subconvexity for central values of {$L$}-functions.
\newblock {\em Ann. of Math. (2)}, 172(2):1469--1498, 2010.

\bibitem{ST}
K.~Soundararajan and J.~Thorner.
\newblock Weak subconvexity without a {R}amanujan hypothesis.
\newblock {\em Duke Math. J.}, 168(7):1231--1268, 2019.
\newblock With an appendix by Farrell Brumley.

\end{thebibliography}
\end{document}